\documentclass[12pt]{amsart}

\usepackage[left=2.5cm,right=2.5cm,top=2.5cm, bottom=2.5cm]{geometry}

\usepackage{float}

\usepackage{amscd}
\usepackage{amssymb}
\usepackage{mathrsfs}
\usepackage{amsmath}
\usepackage{mathabx}
\usepackage{amsthm}
\usepackage[all,cmtip]{xy}
\usepackage{enumerate}
\usepackage{enumitem}
\usepackage{ytableau}
\usepackage{tikz}
\usepackage{tikz-cd} 
\usepackage{mathtools}
\usepackage{color}
\usepackage{wasysym}

\RequirePackage{ae, aecompl, aeguill} % to have pretty pdf

\usepackage[plainpages,urlcolor=red,linktocpage=true]{hyperref}
%,backref
%\usepackage[notref,notcite]{showkeys}

\numberwithin{equation}{section}

\newcommand{\thmref}[1]{Theorem~\ref{#1}}
\newcommand{\secref}[1]{Section~\ref{#1}}

\newcommand{\lemref}[1]{Lemma~\ref{#1}}
\newcommand{\propref}[1]{Proposition~\ref{#1}}
\newcommand{\corref}[1]{Corollary~\ref{#1}}
\newcommand{\defref}[1]{Definition~\ref{#1}}
\newcommand{\remref}[1]{Remark~\ref{#1}}
\newcommand{\eqnref}[1]{(\ref{#1})}
\newcommand{\exref}[1]{Example~\ref{#1}}
\newcommand{\figref}[1]{Figure~\ref{#1}}
\newcommand{\rmkref}[1]{Remark~\ref{#1}}

\newtheorem{theorem}{Theorem}[section]
\newtheorem{lemma}[theorem]{Lemma}
\newtheorem{proposition}[theorem]{Proposition}
\newtheorem{corollary}[theorem]{Corollary}

\theoremstyle{definition}
\newtheorem{example}[theorem]{Example}

\newtheorem{definition}[theorem]{Definition}

\theoremstyle{remark}
\newtheorem{remark}[theorem]{Remark}

% for specifying a name
\theoremstyle{plain} % just in case the style had changed
\newcommand{\thistheoremname}{}
\newtheorem{genericthm}[theorem]{\thistheoremname}

\newtheorem*{genericthm*}{\thistheoremname}
\newenvironment{namedthm*}[1]
  {\renewcommand{\thistheoremname}{#1}%
   \begin{genericthm*}}
  {\end{genericthm*}}

%Simplified symbols:
\newcommand{\nc}{\newcommand}

\nc{\on}{\operatorname}

\nc{\Z}{{\mathbb Z}}
\nc{\bQ}{{\mathbb Q}}
\nc{\C}{{\mathbb C}}
\nc{\R}{{\mathbb R}}
\nc{\bbP}{{\mathbb P}}
\nc{\bF}{{\mathbb F}}

\nc{\boldD}{{\mathbb D}}
\nc{\oo}{{\mf O}}
\nc{\N}{{\mathbb N}}
\nc{\bib}{\bibitem}
\nc{\pa}{\partial}
\nc{\F}{{\mf F}}
\nc{\CA}{{\mathcal A}}
\nc{\cD}{{\mathcal D}}
\nc{\CE}{{\mathcal E}}
\nc{\CP}{{\mathcal P}}
\nc{\CO}{{\mathcal O}}
%\nc{\CK}{{\mathcal K}}
\nc{\CU}{{\mathcal U}}
%%%Gus' operators
\nc{\Res}{\text{Res}}
\nc{\Ind}{\text{Ind}}
\nc{\Ker}{\text{Ker}}
\nc{\id}{\text{id}}
\nc{\ot}{\otimes}
%\nc{\dim}{\text{dim}}
%%%%%%%%%%%%%%%%%%

\nc{\be}{\begin{equation}}
\nc{\ee}{\end{equation}}

\nc{\rarr}{\rightarrow}
\nc{\larr}{\longrightarrow}
\nc{\al}{\alpha}
\nc{\ri}{\rangle}
\nc{\lef}{\langle}

\nc{\W}{{\mc W}}
\nc{\gam}{\ol{\gamma}}
\nc{\Q}{\ol{Q}}
\nc{\q}{\widetilde{Q}}
\nc{\la}{\lambda}
\nc{\ep}{\epsilon}
\nc{\ve}{\varepsilon}

\nc{\g}{\mf g}
\nc{\h}{\mf h}
\nc{\n}{\mf n}
\nc{\bb}{\mf b}
\nc{\G}{{\mf g}}

\nc{\D}{\mc D}
\nc{\cE}{\mc E}
\nc{\CC}{\mc C}
\nc{\CH}{\mc H}
\nc{\CK}{\mc K}
\nc{\CT}{\mc T}
\nc{\CI}{\mc I}
\nc{\CR}{\mc R}

\nc{\UK}{{\mc U}_{\CA_q}}

\nc{\CS}{\mc S}

\nc{\CB}{\mc B}

\nc{\Li}{{\mc L}}
\nc{\La}{\Lambda}
\nc{\is}{{\mathbf i}}
\nc{\V}{\mf V}
\nc{\bi}{\bibitem}
\nc{\NS}{\mf N}
\nc{\dt}{\mathord{\hbox{${\frac{d}{d t}}$}}}
\nc{\E}{\mc E}
\nc{\ba}{\tilde{\pa}}
\nc{\half}{\frac{1}{2}}

\nc{\mc}{\mathcal}
\nc{\ov}{\overline}
\nc{\mf}{\mathfrak}
\nc{\ol}{\fracline}
\nc{\el}{\ell}
\nc{\etabf}{{\bf \eta}}
\nc{\zetabf}{{\bf
\zeta}}\nc{\x}{{\bf x}}
\nc{\xibf}{{\bf \xi}} \nc{\y}{{\bf y}}
\nc{\WW}{\mc W}
\nc{\SW}{\mc S \mc W}
\nc{\sd}{\mc S \mc D}
\nc{\hsd}{\widehat{\mc S\mc D}}
\nc{\parth}{\partial_{\theta}}
\nc{\cwo}{\C[w]^{(1)}}
\nc{\cwe}{\C[w]^{(0)}} \nc{\wt}{\widetilde}
\nc{\gl}{\mf gl}
\nc{\K}{\mf k}

%%%%%%%%%%%%%%%%%New Definitions%%%%%%%%%%%

%\newcommand{\Ind}{{\rm{Ind}}}

\newcommand{\Sym}{{\rm{Sym}}}

\advance\headheight by 2pt

%new definitions
\nc{\fb}{{\mathfrak b}} \nc{\fg}{{\mathfrak g}}
\nc{\tA}{\widetilde{A}}
\nc{\fh}{{\mathfrak h}}  \nc{\fk}{{\mathfrak k}}

\nc{\fl}{{\mathfrak l}} \nc{\fn}{{\mathfrak n}}

\nc{\fp}{{\mathfrak p}} \nc{\fu}{u}

%\nc{\fS}{{\mathfrak S}}

\nc{\fS}{{\Sym}}

\nc{\fsl}{{\mathfrak {sl}}} \nc{\fsp}{{\mathfrak {sp}}}
\nc{\fso}{{\mathfrak {so}}} \nc{\fgl}{{\mathfrak {gl}}}

\nc{\A}{\mc A} \nc{\cF}{{\mathcal F}}

\nc{\cA}{{\mathcal A}} \nc{\cP}{{\mathcal P}} \nc{\cC}{{\mathcal C}}
\nc{\cU}{{\mathcal U}} \nc{\cB}{{\mathcal B}}

\def\xl{{\hbox{\lower 2pt\hbox{$\scriptstyle \mathfrak L$}}}}

\nc{\bX}{{\mathbf X}} \nc{\bx}{{\mathbf x}} \nc{\bd}{{\mathbf d}}
\nc{\bdim}{{\mathbf dim}} \nc{\bm}{{\mathbf m}}

\begin{document}

\title[Homology of the NCP lattice and the Milnor fibre]{On the homology of the noncrossing partition lattice and the Milnor fibre}
\author{Yang Zhang}
\email{yang.zhang@uq.edu.au}

\address{School of Mathematics and Statistics, The University of Sydney, NSW 2006, Australia}
\address{School of Mathematics and Physics, The University of Queensland, QLD 4072,  Australia}

\date{9 June, 2022}
\keywords{noncrossing partition lattice, Milnor fibre, hyperplane arrangement, Artin group}
\subjclass[2010]{20F55,52C35,05E14}

\begin{abstract}
    Let $\mathcal{L}$ be the noncrossing partition lattice associated to a finite Coxeter group $W$.
    In this paper we construct  explicit bases for the top homology groups of  intervals and rank-selected subposets  of $\mathcal{L}$. We define a multiplicative structure on the Whitney homology of $\mathcal{L}$ in terms of the basis, and the resulting algebra has similarities to the Orlik-Solomon algebra.   As an application, we obtain four  chain complexes which compute the integral homology  of the Milnor fibre of the reflection arrangement of $W$,  the Milnor fibre of the discriminant of $W$,  the hyperplane complement of $W$ and  the Artin group of type $W$, respectively. We also tabulate some computational results on the integral homology  of the Milnor fibres.
\end{abstract}
    \maketitle

%\tableofcontents  

\section{Introduction}

Let $W$ be a finite Coxeter group acting faithfully on  $\mathbb{R}^n$ as a reflection group, and let $T$ be the set of reflections of $W$. Denote by $\ell_T(w)$ the minimal number of reflections required to express $w$, and define the partial order $\leq$ on $W$  by letting $u\leq v$ if and only if $\ell_T(v)= \ell_T(u)+ \ell_{T}(u^{-1}v)$ for $u,v\in W$. Then $(W,\leq)$ is a graded poset whose unique minimal element is the identity $e$ and the maximal elements are those having no fixed points in $\mathbb{R}^n$ (e.g. Coxeter elements and $-e$ when it is in $W$).   For any Coxeter element $\gamma$,  the closed interval $\mathcal{L}:=[e,\gamma]$ of the poset $(W,\leq)$ is  a lattice \cite{Bes03,BW08}, called noncrossing partition (NCP) lattice. The isomorphism type of $\mathcal{L}$ is independent of the choice of $\gamma$. If $W$ is of type $A_n$,  elements of $\mathcal{L}$ are those partitions of $\{1,2,\dots, n+1\}$ whose blocks do not pairwise cross \cite{Kre72}. 

It has been known that $\mathcal{L}$ has an EL-labelling, and hence is Cohen-Macaulay \cite{ABW07,Bjo}. This implies that the reduced homology groups of any interval or rank-selected subposet of $\mathcal{L}$ vanish except for the top one. In this paper, we describe explicitly the top homology groups of intervals  and rank-selected subposets of $\mathcal{L}$.

Our motivation to study the homology of $\mathcal{L}$ arises from its connection with the Milnor fibre. Let $\mathcal{A}$ be the set of complexified reflecting hyperplanes of $W$ in $V:=\mathbb{C}^n\cong \mathbb{C}\otimes \mathbb{R}^n$, and for each $H\in \mathcal{A}$ let $\lambda_{H}\in V^*$ be a linear form such that $H=\{v\in V\mid  \lambda_H(v)=0\}$. We call $M:=V- \bigcup_{H\in \mathcal{A}}H$ the hyperplane complement of $W$ \cite{OS80,Sal87}. The homogeneous polynomial $Q_0:=\prod_{H\in \mathcal{A}} \lambda_H$ restricts to a locally trivial fibration $Q_0: M \rightarrow \mathbb{C}-\{0\}$, with the typical Milnor fibre $F_{Q_0}:=Q_0^{-1}(1)$ \cite{Mil68}. The polynomial $Q_0$ satisfies $Q_0(wx)={\rm det}(w) Q_0(x)$ for any $w\in W$ and $x\in V$, and hence $Q:=Q_0^2$ is $W$-invariant. This gives rise to a free $W$-action on the Milnor fibre $F_{Q}:=Q^{-1}(1)$. The quotient space $F_{Q}/W$ can be realised as the Milnor fibre $F_{P}:=P^{-1}(1)$ of the discriminant  $P$ of $W$ \cite{CS04,DL16}. 

Brady, Falk and Watt gave a finite simplicial complex which has the homotopy type of the Milnor fibre $F_{Q_0}$ \cite{BFW18}. This simplicial complex is described in terms of the NCP lattice and hence is called the NCP model of  $F_{Q_0}$. Similarly, they introduced the NCP model of $F_P$ which is a CW complex. Using  a filtration by subcomplexes of the NCP model,  they obtained a chain complex which computes the integral homology of $F_P$, with the chain groups being the top homology groups of some rank-selected subposets of $\mathcal{L}$. However, an explicit description of those  top homology groups and boundary maps was in absence \cite[Theorem 6.4]{BFW18}, and the NCP model of $F_{Q_0}$ is  cumbersome for computational purposes. 

The aim of this study is to give a more practical and efficient way to calculate the homology of the Milnor fibres based on the approach through NCP models. This requires a complete description of the homology of subposets of the NCP lattice $\mathcal{L}$. However,  as $\mathcal{L}$ is not a geometric lattice, it is very  difficult to adapt usual techniques (see, e.g. \cite{Bjo82}) to construct the top homology cycles explicitly. An attempt for $W$ of type $A_n$ can be found  in \cite{Zoq06}. 

A new idea here is to make use of the Hurwitz action. For each interval $[e,w]$ of $\mathcal{L}$, the  Hurwitz action is known to yield a transitive action of the braid group $B_{\ell_{T}(w)}$ on the maximal chains of $[e,w]$ \cite{Bes03}.
Given a maximal chain $e=w_0<w_1<\dots <w_k=w$ of $[e,w]$, we identify it with the sequence ${\bf t}:=(t_1, t_2,\dots ,t_k)$ of reflections, where $k=\ell_{T}(w)$ and  $t_i=w_{i-1}^{-1}w_{i}\in T$ for $1\leq i\leq k$.  Associated to this chain we define the element
\[\beta_{{\bf t}}:=\sum_{\pi\in {\rm Sym}_k} {\rm sgn}(\pi)\, \underline{\pi}.(t_1,t_2,\dots,t_k),\]
where ${\rm sgn}$ is the usual sign character  of ${\rm Sym}_k$, $\underline{\pi}$ is the set-theoretic lift of ${\rm Sym}_k$ into the braid group $B_k$, and the action on the sequence is given by the  Hurwitz action (see \eqref{eq: Braidact}). These elements associated to the maximal chains of $[e,w]$ span a free abelian group $\mathcal{B}_w$, which is shown to be isomorphic to the top reduced homology group  of the interval $(e,w)$. One can further define the free abelian group $\mathcal{B}_k:= \bigoplus_{w\in \mathcal{L}_k} \mathcal{B}_w$ for $0\leq k\leq n$, where $\mathcal{L}_k$ comprises elements $w\in \mathcal{L}$ with $\ell_{T}(w)=k$. We give a surjective linear map from $\mathcal{B}_k$ to the top reduced  homology group of the rank-selected subposet $\mathcal{L}_{[k]}$ of $\mathcal{L}$, where $\mathcal{L}_{[k]}$ consists of elements $w\in \mathcal{L}$ such that $1\leq \ell_{T}(w)\leq k$. 
Therefore, the homology of intervals and rank-selected subposets of $\mathcal{L}$ can be described by the $\mathbb{Z}$-graded free abelian group $\mathcal{B}:= \bigoplus_{k=0}^n \mathcal{B}_k$.

Using the construction above, we give explicit bases for the top homology groups of intervals $(e,w)$ and  rank-selected subposets $\mathcal{L}_{[k]}$ in \thmref{thm: homint} and \thmref{thm: basisrksel}, respectively. Moreover, we introduce a natural  multiplicative structure on the free abelian group $\mathcal{B}$, which by our construction is isomorphic to the Whitney homology of $\mathcal{L}$ \cite{Bac75,Bjo82}. This makes $\mathcal{B}$ into a finite dimensional algebra, which is studied systematically in a sequel to this paper \cite{LZ22}. Interestingly,  the algebra $\mathcal{B}$ resembles the Orlik-Solomon algebra of the intersection lattice \cite{OS80},  and its main properties  are summarised in \thmref{thm: main}.

We apply the above results to the NCP models of the Milnor fibres, obtaining two chain complexes which compute the homology groups $H_*(F_Q; \mathbb{Z})$ and $H_*(F_P; \mathbb{Z})$ in terms of the algebra $\mathcal{B}$, respectively; see \thmref{thm: fullcom} and \thmref{thm:  disfibre}.  The chain complex for $F_{Q}$ comes with  an action of the Coxeter group $W$ and a partial action of the monodromy group of $F_{Q}$. A chain complex for $F_{Q_0}$ can  be derived from that for $F_{Q}$. We calculate $H_*(F_{Q_0}; \mathbb{Z})$ and $H_*(F_P; \mathbb{Z})$ by computer for some exceptional types and infinite families $A_n, B_n, D_n$ in low ranks. Most of our computational results on $H_*(F_{Q_0}; \mathbb{Z})$ are new, and in particular we find no torsion  in the obtained homology groups. Moreover, we correct Settepanella's result on $H_*(F_{Q_0}; \mathbb{Q})$ for type $F_4$ \cite{Set09},  and as far as we know, our results on $H_*(F_P;\mathbb{Z})$ for type $D_n, n\leq 8$ are  new.

A complete description of the homology of the Milnor fibre $F_Q$ or $F_{Q_0}$ is not known despite a substantial literature (see, e.g. \cite{CS95,DL12,DL16}). Indeed, there were very few known  results about $H_*(F_{Q_0}; \mathbb{Z})$. The normal approach to  calculating the homology $H_*(F_{Q_0}; R)$ and $H_*(F_P; R)$ is through the Salvetti complex with coefficients in $R[q,q^{-1}]$ for a ring $R$ \cite{DPS01,CS04,Cal06,Set04,Set09}. It is usually difficult to obtain the abelian group structure of the homology  when $R=\mathbb{Z}$, the ring of integers. Our approach  provides a direct way to compute the integral homology. 

As another application, we obtain two chain complexes which compute $H_*(M; \mathbb{Z})$ and $H_*(M/W; \mathbb{Z})$ in terms of the algebra $\mathcal{B}$ in \thmref{thm: homoM} and \thmref{thm: homoMquo}, respectively.  This is again based on the  NCP models of the hyperplane complement $M$ and its quotient $M/W$ \cite{BFW18}. It is well known that the (co)homology of $M$ is torsion free \cite{Bri73} and  the cohomology ring $H^*(M;\mathbb{Z})$ is isomorphic to the Orlik-Solomon algebra \cite{OS80}. This further suggests a mysterious link between $\mathcal{B}$ and the Orlik-Solomon algebra. Moreover, since $M/W$ is a $K(\pi,1)$ space having the Artin group $A(W)$ as its fundamental group, we have $H_*(M/W; \mathbb{Z})=H_*(A(W);\mathbb{Z})$. Therefore, in addition to the approach of the Salvetti complex \cite{Sal87,Sal94},  our chain complex for $M/W$ provides a new algebraic method to compute the integral homology of $A(W)$ for any finite Coxeter group $W$.

This paper is organised as follows. In \secref{sec: pre} we introduce some basic notions and results. In \secref{sec: cons} we give an explicit construction of the top homology cycles of any interval by using the Hurwitz action. Bases for the top homology groups of  intervals and rank-selected subposets are given in \secref{sec: basis}. In \secref{sec: mult} we define a natural multiplicative structure on  the Whitney homology of $\mathcal{L}$. Application to the Milnor fibres $F_{Q}$ and $F_P$ is discussed in \secref{sec: app1}, and to the hyperplane complement $M$ and its quotient $M/W$  given in \secref{sec: app2}. Finally, in Appendix \ref{sec: appd} we tabulate computational results on the integral  homology of the Milnor fibres $F_{Q_0}$ and $F_P$.

\vspace{0.3cm}
\noindent
{\bf Acknowledgements.} This paper is based on my PhD thesis. I would like to express my deepest gratitude to my supervisors  Gus Lehrer and  Ruibin Zhang for their support, encouragement and many insightful conversations in the course of this work. I am grateful to Anthony Henderson for useful discussions. I would also like to thank the referee, whose comments and suggestions have greatly improved the exposition in this paper. This work is supported by the Australian Research Council.

\section{Preliminaries}\label{sec: pre}

We work over the ring $\mathbb{Z}$ of integers throughout the paper. For any $m\in \mathbb{Z}$, denote by $\mathbb{Z}_m$ the abelian group $\mathbb{Z}/m \mathbb{Z}$. Let $W$ be a finite Coxeter group of rank $n$, and let $T$ be the set of reflections of $W$. For any $u,w\in W$ we write $u^{w}$ for the conjugate $w^{-1}uw$. For any non-negative integer $n$, we denote $[n]:=\{1,2, \dots, n\}$ and $[0]:=\emptyset$. By abuse of notation, we  always denote by $\partial_k$  the  boundary maps of  chain complexes.

\subsection{Basics on posets}
%We collect some general facts about posets. 
We refer to \cite{Sta12} for basics on posets. Let $(P, \leq)$ be a finite  poset.  Assume that $P$ is bounded, i.e. $P$  contains a unique minimal element $\hat{0}$ and a unique  maximal element $\hat{1}$. The proper part of  $P$  is defined to be $\bar{P}:=P-\{\hat{0}, \hat{1}\}$. For a non-bounded poset $Q$, we may define its bounded extension  $\hat{Q}:=Q\cup \{\hat{0}, \hat{1}\} $.

If every maximal chain of  $P$ has the same length $n$, we call $P$ a \emph{graded poset} of rank $n$. There is a  unique associated rank function 
$ {\rm rk}: P\rightarrow \{0,1,\dots, n\}$
such that ${\rm rk}(\hat{0})=0$ and ${\rm rk}(y)={\rm rk}(x)+1$ if $x\lessdot y$, where $x\lessdot y$ means there is no third element between $x$ and $y$. 

For $x\leq y$ in $P$, we denote by $(x,y)$ the open interval $\{z \in P \mid  x<z<y\}$ and by $[x,y]$ the closed interval $\{ z \in P \mid x\leq z\leq y\}$.  For any subset $S$ of $[n-1]$, we define the rank-selected subposet $P_{S}:=\{x\in P \mid {\rm rk}(x)\in S \}$.

The order complex $\Delta(\bar{P})$  is a simplicial complex which has as faces the chains of $\bar{P}$.  The $k$-th reduced homology group of $\bar{P}$ is defined  to be $\widetilde{H}_k(\bar{P}):= \widetilde{H}_k(\Delta(\bar{P}); \mathbb{Z})$, the reduced simplicial  homology of the order complex $\Delta(\bar{P})$. For any open interval $(x,y)$ of $P$,  we denote by  $\Delta(x,y)$  the order complex of $(x,y)$ and by $\widetilde{H}_k(x,y)$ the $k$-th reduced homology of $\Delta(x,y)$. In particular, we have $\widetilde{H}_{-1}(\emptyset)=\mathbb{Z}$ and $\widetilde{H}_{k}(\emptyset)=0$ for $k>-1$.

Recall that the \emph{ M\"{o}bius function} $\mu=\mu_P: P\times P \rightarrow \mathbb{Z}$ is the  integer-valued function defined by
$\mu(x,x)=1$ for any $x\in P$, $\sum_{x\leq z\leq y }\mu(x,z)=0$ if  $x<y$, and $\mu(x,y)=0$ otherwise.
Denote $\mu(P):=\mu(\hat{0}, \hat{1})$ and $\mu(x):=\mu(\hat{0},x)$ for $x\in P$.

Let $(W,S)$ be a finite Coxeter system of rank $n$, and let $T=\bigcup_{w\in W}wSw^{-1}$ be the set of reflections of $W$. 
Denote by $\ell_T(w)$ the number of reflections in a shortest expression for $w\in W$ as a product of reflections.
 We call such a shortest factorisation  a \emph{$T$-reduced expression} of $w$. 
Note that the absolute length is invariant under conjugation, i.e. $\ell_{T}(uwu^{-1})=\ell_{T}(w)$ for any $u,w\in W$ \cite{Bes03}. 

We define the \emph{absolute order} $\leq$ on $W$  by specifying that $u\leq v$ if and only if $\ell_T(v)= \ell_T(u)+ \ell_{T}(u^{-1}v)$ for $u,v\in W$ \cite{Bes03,BW08}. The Coxeter group  $W$ equipped with the absolute order $\leq$ is  a graded poset with $\ell_{T}$ being the rank function. In particular, the poset $(W,\leq)$ has all  elliptical elements as its maximal elements and the identity $e$ as the unique minimal element \cite[Lemma 1.2.1]{Bes03}. Recall from \cite{Leh05} that the elliptical elements of $W$ are those with no fixed points in $\mathbb{R}^n$, e.g. Coxeter elements and $-e$ when it is in $W$. 

Let $\gamma$ be a Coxeter element, i.e. a product of  simple reflections of $S$ in some order. Following \cite{Bes03,BW08}, we define the \emph{noncrossing partition lattice} to be the closed interval $\mathcal{L}:=[e,\gamma]$ of $(W,\leq)$. Since the Coxeter elements form a conjugacy class \cite{Ste59}, different choices of $\gamma$  produce isomorphic noncrossing partition lattices $\mathcal{L}=[e,\gamma]$. Therefore,  the isomorphism type of $\mathcal{L}$ is  independent of $\gamma$.  %In what follows, we fix a Coxeter element $\gamma$; see \eqnref{eq:Coxelmt}.

\subsection{The EL-labelling}\label{sec: EL}
We will recall from \cite{ABW07} that the noncrossing partition lattice $\mathcal{L}$ has an EL-labelling. This leads to some important topological and combinatorial consequences, which are summarised in \propref{prop: dimtop} and \propref{prop: dimtoprksel}.

The set of covering relations  $\mathcal{E}(\mathcal{L})$ consists of the pairs $(u,v)$ with $u^{-1}v\in T$ for $u\lessdot v\leq \gamma$.  This defines a natural  edge labelling 
\begin{equation*}\label{eq: edge-labelling}
  \lambda: \mathcal{E}(\mathcal{L}) \rightarrow T, \quad (u,v) \mapsto u^{-1}v. 
\end{equation*}
Let  ${\bf c}: x=w_0< w_1< \dots < w_k=y$ be a maximal chain of any closed interval $[x,y]$ of $\mathcal{L}$. In the closed interval $[x,y]$ we may identify ${\bf c}$ with its  labelling sequence $\lambda({\bf c}):=(w_0^{-1}w_1, \dots, w_{k-1}^{-1}w_k)$, where $w_{i-1}^{-1}w_i\in T$ for $1\leq i\leq k$.

We may choose a total order $\preceq$ on $T$ which is irrelevant to the absolute order $\leq$. The chain ${\bf c}$ is said to be increasing (resp. decreasing) if $\lambda({\bf c})$ is strictly increasing (resp. decreasing) with respect to $\preceq$ on $T$. Any two maximal chains ${\bf c}$ and ${\bf c}'$ of $[x,y]$ can be compared by the lexicographical order induced by $\preceq$ on  their  labelling sequences.

It has been proved that there exists a total order $\preceq$ on $T$ which turns $\lambda$ into an EL-labelling \cite{ABW07}.  Recall from \cite{Bjo} that an edge labelling $\lambda: \mathcal{E}(\mathcal{L})\rightarrow T$ is called an EL-labelling of $\mathcal{L}$  if for every  interval $[x,y]$ of $\mathcal{L}$ 
   \begin{enumerate}
     \item there is a unique increasing  maximal chain in $[x,y]$,  and
     \item this chain is lexicographically smallest among all maximal chains in $[x,y]$.
   \end{enumerate}
If  $\mathcal{L}$ has an EL-labelling, then it is  Cohen-Macaulay \cite[Theorem 2.3]{Bjo}, i.e. for any $x<y$ of $\mathcal{L}$ we have
\[
  \widetilde{H}_i(x,y)=0, \quad \forall i \neq \ell_T(y)-\ell_T(x)-2. 
\]
This total order $\preceq$ on $T$ can be given explicitly in terms of positive roots of $W$  as we now describe. 

Let $\Phi$ be the root system of $W$ with the  set $\Pi=\{\alpha_i \mid  i \in [n]\}$ of simple roots. Without loss of generality, we may assume that $W$ is irreducible and hence the Coxeter graph  of $W$ is a tree, which is a bipartite graph. Then  $\Pi_1$ and $\Pi_2$ can be chosen such that they are associated with vertices of  two parts of the graph, respectively. Precisely,  $\Pi$ can be written as the disjoint union $\Pi=\Pi_1 \cup \Pi_2$, 
where $\Pi_1=\{\alpha_{i_1},\dots, \alpha_{i_l}\}$ and $\Pi_2=\{\alpha_{i_l+1}, \dots, \alpha_{i_n}\}$ for which $\alpha_{i_k}\in \Pi_1$ are mutually orthogonal as also are $\alpha_{i_k}\in \Pi_2$ (see \cite{Ste59}).

The set of positive roots is in bijection with the set of reflections of $W$. Recall that $W$ acts faithfully on the Euclidean space $\mathbb{R}^n$ whose inner product we denote by $(-,-)$. For any positive root $\alpha$ relative to $\Pi$, the corresponding reflection  is defined by
$t_{\alpha}(x):= x - 2 
\frac{(\alpha,x)}{(\alpha,\alpha)}\alpha$ for any $x\in \mathbb{R}^n$.
Throughout, we use the following  Coxeter element
\begin{equation}\label{eq:Coxelmt}
     \gamma= (\prod_{\alpha\in \Pi_1} t_{\alpha})(\prod_{\alpha\in \Pi_2} t_{\alpha}).
\end{equation}
Note that the simple reflections $t_{\alpha}$ and $t_{\beta}$ commute whenever $\alpha, \beta\in \Pi_1$ or $\alpha, \beta\in \Pi_2$.

Now we define a total order on the set of positive roots. Let $h$ be the Coxeter number, i.e. the order of $\gamma$. Then the number of positive roots is $nh/2$. It is proved in \cite[Theorem 6.3]{Ste59} that the positive roots $\rho_k$ of $\Phi$ relative to $\Pi$ can be produced successively using the following formulae 
\begin{equation}\label{eq: posroot}
   \rho_k= \begin{cases}
  \alpha_{i_k}, \quad  & 1\leq k\leq l,\\
  -\gamma(\alpha_{i_k}), \quad &l+1\leq k\leq n,\\
  \gamma(\rho_{k-n}), \quad &n+1\leq k\leq  \frac{nh}{2}. 
 \end{cases}
\end{equation}
 This yields a total order $\preceq$ on the set $T$ of reflections
\begin{equation}\label{eq: total ordering}
  t_{\rho_1}\prec t_{\rho_2}\prec \dots \prec t_{\rho_{nh/2}}.
\end{equation}
This total order will be used throughout the paper.

\begin{theorem}\label{thm: edgelab}\cite[Theorem 4.2]{ABW07}
 If the set $T$ of reflections is totally ordered by \eqref{eq: total ordering}, then the natural edge labelling
  \[  \lambda: \mathcal{E}(\mathcal{L}) \rightarrow T, \quad (u,v) \mapsto u^{-1}v. \]
   is an EL-labelling of the lattice $\mathcal{L}$.
\end{theorem}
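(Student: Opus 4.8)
The plan is to verify the two defining properties of an EL-labelling directly for the labelling $\lambda$ and the total order $\preceq$ given by \eqref{eq: total ordering}, reducing everything to the case of the full interval $\mathcal{L}=[e,\gamma]$. The key structural input is that every closed interval $[x,y]$ of $\mathcal{L}$ is itself isomorphic, as an edge-labelled poset, to a noncrossing partition lattice for a parabolic (reflection) subgroup: indeed $[x,y]\cong [e, x^{-1}y]$ via left translation by $x^{-1}$, and $x^{-1}y$ lies below some Coxeter element of a standard parabolic subgroup $W'$ generated by reflections. The left-translation map sends the labelling sequence $(w_0^{-1}w_1,\dots,w_{k-1}^{-1}w_k)$ of a maximal chain in $[x,y]$ to that of the corresponding chain in $[e,x^{-1}y]$, so it preserves labels on the nose. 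Hence it suffices to show: for every $w\in\mathcal{L}$ (and then, by the parabolic reduction, for $w$ a Coxeter element of $W$), the interval $[e,w]$ has a unique $\preceq$-increasing maximal chain, and that chain is lexicographically least. The subtle point is that the restriction of $\preceq$ to the reflections of a parabolic subgroup must again be a valid EL-order for that subgroup's NCP lattice; this is where the explicit Steinberg ordering \eqref{eq: posroot} is used, since it behaves well under passing to parabolic root subsystems.

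First I would set up the combinatorics of maximal chains of $[e,w]$ in terms of the Hurwitz action: the reduced $T$-factorisations $w=t_1t_2\cdots t_k$ are exactly the labelling sequences of maximal chains, and the braid group $B_k$ acts transitively on them. Then I would identify the unique increasing chain. The natural candidate is the \emph{lexicographically smallest} reduced factorisation with respect to $\preceq$: take $t_1$ to be the $\preceq$-least reflection $t$ with $t\leq w$, then recurse on $t_1^{-1}w$. One must show this greedy sequence is strictly increasing, i.e. that at each stage the $\preceq$-least reflection below the remaining element is strictly larger than the one just chosen. This monotonicity is the heart of the matter and is exactly what the ordering \eqref{eq: posroot} is engineered to guarantee: the inductive production of positive roots by applying $\gamma$ means that ``being a reflection below $w$'' interacts with $\preceq$ in a controlled, ``convex'' way. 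I would prove it by analysing, for each reflection $t_{\rho_j}$, the set of reflections $t$ with $t_{\rho_j}\leq t_{\rho_j} t$ (equivalently the roots $\rho$ such that $t_{\rho}$ extends the chain), using the geometry of the $\gamma$-orbit structure on roots and the fact that $\gamma$ has order $h$ with the $\Pi_1/\Pi_2$ bipartition controlling the first few steps.

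For the uniqueness half of (1), I would argue that any increasing maximal chain must coincide with the greedy one: its first label is $\leq$-minimal below $w$ among labels of \emph{all} maximal chains (since an increasing sequence starts as small as possible and any reflection below $w$ occurs as the first label of some maximal chain, by Hurwitz transitivity), forcing it to equal $t_1$; then induct on $[e,t_1^{-1}w]$, which by the parabolic reduction is again an NCP lattice with an EL-order. Property (2), that the increasing chain is lexicographically least, then follows immediately from the same greedy characterisation: its label sequence is coordinatewise as small as possible, so it is lex-least, and being strictly increasing it is the unique chain achieving this.

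The main obstacle, as indicated, is the monotonicity claim — showing the greedy reduced factorisation is strictly increasing for \emph{every} interval, not just $[e,\gamma]$. The cleanest route is to first establish it for $w=\gamma$ using the explicit list \eqref{eq: posroot} (the reflections below $\gamma$ and the order in which the greedy process selects them can be matched against the Steinberg enumeration), and then to prove a parabolic-closure statement: if $W'\leq W$ is generated by a subset of reflections and $\gamma'$ is a Coxeter element of $W'$ obtained by the analogue of \eqref{eq:Coxelmt} for the induced bipartition of a simple system of $W'$, then the restriction of $\preceq$ to the reflections of $W'$ is the Steinberg-type order for $\gamma'$. Granting this, every interval $[x,y]\cong[e,x^{-1}y]$ embeds into a parabolic NCP lattice where the base case applies, and one concludes by the induction on rank already described. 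I expect the root-system bookkeeping in the base case and in the parabolic-closure step to be the only genuinely delicate part; the topological consequence (Cohen--Macaulayness) is then immediate from \cite[Theorem 2.3]{Bjo}.
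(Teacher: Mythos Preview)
The paper does not prove this theorem; it is quoted verbatim from \cite[Theorem 4.2]{ABW07} and no argument is given here. So there is no ``paper's own proof'' to compare against, only the original source.

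As for your proposal on its own merits: the overall architecture (reduce to intervals $[e,w]$, build the greedy lex-least factorisation, show it is increasing and unique) is the standard one, and your identification of the monotonicity of the greedy step as the crux is correct. The gap is precisely where you put it, and it is more serious than you suggest. Your parabolic-closure claim --- that the restriction of the Steinberg order $\preceq$ on $T$ to the reflections of a parabolic subgroup $W'$ coincides with the Steinberg order for a bipartite Coxeter element of $W'$ --- is false in general. For a non-standard parabolic $W'$ (which is what arises from a generic $x^{-1}y\in\mathcal{L}$), the reflections of $W'$ are scattered through the list \eqref{eq: total ordering} in a way that need not match any internal Steinberg enumeration of $W'$; different parabolics of the same type sit differently inside $\preceq$. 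So the induction ``every interval is an NCP lattice for a smaller group with its own Steinberg EL-order'' does not close up.

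The proof in \cite{ABW07} avoids this trap: it does \emph{not} reduce to parabolic subgroups. Instead it works directly with the global order \eqref{eq: total ordering} and proves, for an arbitrary interval $[x,y]\subseteq\mathcal{L}$, that the lex-first maximal chain is increasing, using specific properties of how conjugation by $\gamma$ shifts the list of positive roots (the key being that $\rho_{k+n}=\gamma(\rho_k)$, so the order is compatible with the Coxeter element in a precise cyclic sense). If you want to repair your argument, you should abandon the parabolic reduction and instead prove the monotonicity of the greedy step uniformly for all $w\le\gamma$, exploiting this cyclic structure of $\preceq$ under $\gamma$ rather than any inherited order on subgroups.
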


\begin{example}\label{exam: dih}
 (Dihedral group) Let $W$ be the Coxeter group of type $I_2(m)$ defined by 
 \[
  W:=\langle s_1, s_2 \mid s_1^2=s_2^2=(s_2s_1)^m=1\rangle, \quad m\geq 3, 
 \]
 where $s_1, s_2$ are  simple reflections associated to simple roots $\alpha_1,\alpha_2$ of $W$, respectively.
 We choose the Coxeter element $\gamma=s_1s_2$. Using \eqref{eq: posroot}, we obtain reflections  $t_i= s_1(s_2s_1)^{i-1}, 1\leq i\leq m$  associated to all successive positive roots. Therefore, the set of reflections is totally ordered by
 \begin{equation}\label{eq: totord}
 T=\{t_1(=s_{1})\prec t_2\prec \dots \prec  t_{m-1}\prec t_{m}(=s_{2})\}.
 \end{equation} 
In the associated NCP lattice $\mathcal{L}=[e,\gamma]$, there are $m$ maximal chains corresponding to the $T$-reduced factorisations $\gamma=t_1t_m=t_2t_1=t_3t_2=\dots =t_{m}t_{m-1}$. Note that, with respect to the total ordering of $T$, $\gamma=t_1t_m=s_1s_2$ is the unique increasing factorisation and all other factorisations are decreasing.
\end{example}

\begin{example}\label{exam: A3}
	(Type $A_3$) Let $W={\rm Sym}_4$ be the symmetric group which acts on   $\mathbb{R}^4$ by permuting the standard basis  $e_i, 1\leq i\leq 4$. 
	Let $\Phi^+=\{e_i-e_j\mid 1\leq i<j\leq 4\}$ be the set of positive roots with respect to the simple  system $\Pi=\{\alpha_i:= e_i-e_{i+1}\mid 1\leq i\leq 3\}$. The reflection associated to the positive root $e_i-e_j$ is $(ij)$ in the cycle notation. We partition the simple system as  $\Pi=\Pi_1\cup \Pi_2=\{\alpha_1,\alpha_3\}\cup \{\alpha_2\}$, and choose the Coxeter element $\gamma=t_{\alpha_1}t_{\alpha_3}t_{\alpha_2}=(1243)$, which is of the form given in \eqnref{eq:Coxelmt}. According to \eqnref{eq: posroot}, the positive roots and reflections are totally ordered as follows:
	\begin{enumerate}
		\item Positive roots: $\{\alpha_1\prec\alpha_3\prec\alpha_1+\alpha_2+\alpha_3\prec\alpha_2+\alpha_3\prec\alpha_1+\alpha_2\prec\alpha_2 \}$; 
		\item Reflections: $\{(12)\prec (34)\prec(14)\prec(24)\prec(13)\prec(23)\}$.
	\end{enumerate}
 \figref{fig: A3labelling} shows  the edge labelling of $\mathcal{L}=[(1), (1243)]$ given by \thmref{thm: edgelab}. The unique increasing maximal  chain is labelled by the sequence $((12), (34), (23))$. There are five decreasing chains  labelled by the following sequences:
	\[ 
	\begin{aligned}
		&((14), (34), (12)),\quad  ((13), (14), (12)), \quad ((24), (14), (34)),  \\
		&((13), (24), (14)),\quad  ((23), (13), (24)).
	\end{aligned}
	\]
\end{example}

\begin{figure}[h]
	
	\begin{tikzpicture}[scale=0.8]
		
		\node [scale=0.8] at (7,-0.6) {(1)};
		
		\foreach \x\y in  {0/(12),2.8/(13),5.6/(14),8.4/(23),11.2/(24),14/(34)} {  \node [scale=0.8] at (\x ,2) {\y};}
		\foreach \x\y in  {0/(123),2.8/(124),5.6/(143),8.4/(243),11.2/(12)(34),14/(13)(24)} {\node [scale=0.8] at (\x ,6) {\y};}
		
		\node [scale=0.8] at (7,8.5) {(1243)};
		
		\foreach \x\y in {0/(12),2.8/(13),5.6/(14),8.4/(23),11.2/(24),14/(34)}  { \draw [line width=0.25mm](7,-0.3) -- (\x,1.8) node [scale=0.6] [midway,fill=white] {\y};}
		\foreach \x\y in {0/(24),2.8/(34),5.6/(12),8.4/(13),11.2/(23),14/(14) } { \draw [line width=0.25mm](7,8.3) -- (\x,6.2) node [scale=0.6] [midway,fill=white] {\y};}
		
		\foreach \x\y in {0/(23),2.8/(24),11.2/(34)} {\draw [line width=0.25mm] (0,2.2) -- (\x,5.8)  node [scale=0.6] [near start,fill=white] {\y};}
		\foreach \x\y in {5.6/(14),14/(24)} {\draw [line width=0.25mm] (2.8,2.2) -- (\x,5.8) node [scale=0.6] [very near start,fill=white] {\y};}
		\foreach \x\y in {0/(12)} {\draw [line width=0.25mm] (2.8,2.2) -- (\x,5.8) node [scale=0.6] [near end,fill=white] {\y};}

		\foreach \x\y in {2.8/(12),5.6/(34)} {\draw [line width=0.25mm] (5.6,2.2) -- (\x,5.8) node [scale=0.6] [near end,fill=white] {\y};}
		\foreach \x\y in {0/(13),8.4/(24)} {\draw [line width=0.25mm] (8.4,2.2) -- (\x,5.8) node [scale=0.6] [very near start,fill=white] {\y};}
		\foreach \x\y in {2.8/(14),8.4/(34)} {\draw [line width=0.25mm] (11.2,2.2) -- (\x,5.8) node [scale=0.6] [near start,fill=white] {\y};}
		
		\foreach \x\y in {14/(13)} {\draw [line width=0.25mm] (11.2,2.2) -- (\x,5.8) node [scale=0.6] [near end,fill=white] {\y};}
		
		\foreach \x\y in {5.6/(13)} {\draw [line width=0.25mm] (14,2.2) -- (\x,5.8) node [scale=0.6] [very near end,fill=white] {\y};}
		\foreach \x\y in {8.4/(23)} {\draw [line width=0.25mm] (14,2.2) -- (\x,5.8) node [scale=0.6] [midway,fill=white] {\y};}
		\foreach \x\y in {11.2/(12)} {\draw [line width=0.25mm] (14,2.2) -- (\x,5.8) node [scale=0.6] [near start,fill=white] {\y};}

	\end{tikzpicture}
	\caption{The $EL$-labelling of $\mathcal{L}$ with $\gamma=(1243)$}
	\label{fig: A3labelling}
\end{figure}
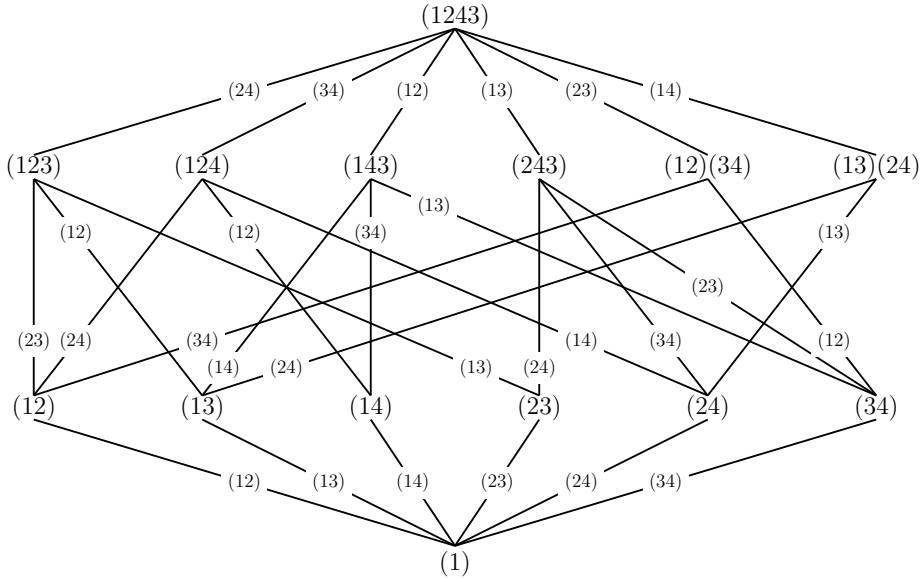

If $W$ is a finite Coxeter group, then $\mathcal{L}$ is a direct product of the NCP lattices $\mathcal{L}(W_i)$ over irreducible components $W_i$ of $W$. It is a result of \cite{Bjo} that the EL-labelling is preserved under the direct product. Moreover, any closed interval $[e,w]$ of $\mathcal{L}$ has an EL-labelling,  which is given by the restriction $\lambda|_{\mathcal{E}([e,w])}$ of the natural labelling $\lambda: \mathcal{E}(\mathcal{L}) \rightarrow T$.

\subsection{Combinatorial results} Recall that if $\mathcal{L}$ has an EL-labelling then $\mathcal{L}$ is  Cohen-Macaulay. This fact relates the M\"{o}bius function $\mu$ of $\mathcal{L}$ to the top homology groups of intervals of $\mathcal{L}$, and gives a combinatorial way to compute $\mu$ \cite{Sta12}.

For any $w\in \mathcal{L}$ we define 
 \begin{equation}\label{eq: Xik-1}
     \mathcal{D}_{w}:=\{ (t_1,\dots, t_{k})\mid  \text{$w=t_1\cdots t_k$ is $T$-reduced and \,} t_1\succ t_2\succ\dots\succ t_k \}.
 \end{equation}
 In words, $\mathcal{D}_{w}$ is the set of decreasing labelling sequences for the maximal chains $e<t_1<t_1t_2<\dots <t_1t_2\cdots t_k=w$ of $[e,w]$. Note that any interval $[u,v]$ of $\mathcal{L}$ is isomorphic to $[e, u^{-1}v]$ as posets \cite{ABW07}. 

\begin{proposition}\label{prop: dimtop}
 For any $w\in \mathcal{L}$ with $1\leq \ell_{T}(w)=k\leq n$, we have
\[{\rm rank}\, \widetilde{H}_{k-2}(e,w)= (-1)^{k}\mu(w)=|\mathcal{D}_w|. \]
\end{proposition}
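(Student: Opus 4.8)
The plan is to verify the two equalities in \propref{prop: dimtop} separately, exploiting the EL-labelling structure recalled in \secref{sec: EL}.

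\textbf{Step 1: the topological equality $\operatorname{rank}\widetilde{H}_{k-2}(e,w)=(-1)^k\mu(w)$.} Since $[e,w]$ has the restricted EL-labelling $\lambda|_{\mathcal{E}([e,w])}$ (as noted at the end of \secref{sec: EL}), the open interval $(e,w)$ is Cohen-Macaulay of dimension $k-2$, so $\widetilde{H}_i(e,w)=0$ for all $i\neq k-2$. Consequently the only nonvanishing reduced homology is in the top degree, and the reduced Euler characteristic of $\Delta(e,w)$ equals $(-1)^{k-2}\operatorname{rank}\widetilde{H}_{k-2}(e,w)=(-1)^k\operatorname{rank}\widetilde{H}_{k-2}(e,w)$. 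On the other hand, Philip Hall's theorem identifies the reduced Euler characteristic of the order complex of an open interval with the Möbius function: $\widetilde{\chi}(\Delta(e,w))=\mu_{\mathcal{L}}(e,w)=\mu(w)$. Comparing the two expressions gives $\operatorname{rank}\widetilde{H}_{k-2}(e,w)=(-1)^k\mu(w)$. This step is essentially a citation of standard poset topology (e.g. \cite{Bjo82,Sta12}) together with the Cohen-Macaulayness already established via \thmref{thm: edgelab}, so there is nothing genuinely new to prove here.

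\textbf{Step 2: the combinatorial equality $(-1)^k\mu(w)=|\mathcal{D}_w|$.} This is the content of Björner's theory of EL-shellable posets: if $\lambda$ is an EL-labelling of a graded poset of rank $k$, then $\mu=(-1)^k\cdot(\text{number of decreasing maximal chains})$. Here the relevant graded poset is the closed interval $[e,w]$ of rank $k$, whose decreasing maximal chains are by definition indexed by $\mathcal{D}_w$ (the set of strictly $\prec$-decreasing $T$-reduced factorisations $w=t_1\cdots t_k$). The cleanest route is to invoke \cite[Theorem 2.3]{Bjo} or the equivalent statement in \cite{Bjo82}: EL-shellability implies that $\Delta(e,w)$ is homotopy equivalent to a wedge of $(k-2)$-spheres, one for each decreasing maximal chain of $[e,w]$, whence $\operatorname{rank}\widetilde{H}_{k-2}(e,w)=|\mathcal{D}_w|$. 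Combined with Step 1 this yields all three equalities at once. If one prefers a self-contained argument, one can instead set up the standard recursion: the unique increasing maximal chain (part (1) of the EL-labelling) can be used to build an acyclic matching on the non-decreasing chains, leaving exactly the decreasing ones as critical cells.

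\textbf{Main obstacle.} There is no substantial obstacle: both equalities are immediate consequences of the EL-shellability of $\mathcal{L}$ (hence of every interval $[e,w]$), which \thmref{thm: edgelab} supplies, together with the well-known homological and combinatorial consequences of EL-shellability. The only point requiring a little care is bookkeeping with the indexing convention — that $\mathcal{D}_w$, defined via factorisations $w=t_1\cdots t_k$ with $t_1\succ\cdots\succ t_k$, is in natural bijection with the decreasing maximal chains $e\lessdot t_1\lessdot t_1t_2\lessdot\cdots\lessdot w$ of $[e,w]$, which is clear from the definition of the edge labelling $\lambda(u,v)=u^{-1}v$. Accordingly I would keep the proof short: cite \cite{Bjo,Bjo82} for the general EL-shellable statement, recall that \thmref{thm: edgelab} and the direct-product/interval remarks make it applicable to $[e,w]$, and identify $\mathcal{D}_w$ with the set of decreasing maximal chains.
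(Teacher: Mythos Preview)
Your proposal is correct and follows essentially the same approach as the paper: both arguments reduce to the standard consequences of EL-shellability (Cohen--Macaulayness gives the first equality via the Euler characteristic/M\"obius identity, and the decreasing-chain count gives the second). The only minor difference is that the paper first reduces to the case $w=\gamma$ via the direct-product decomposition of intervals, whereas you work directly on $[e,w]$ using the restricted EL-labelling noted at the end of \secref{sec: EL}; your route is slightly more direct but not substantively different.
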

\begin{proof}
  Each closed interval $[e,w]$ of $\mathcal{L}$ is isomorphic to a direct product of irreducible noncrossing partition lattices of smaller ranks (see \cite[Lemma 1.4.3]{Bes03} and \cite[Proposition 2.6.11]{Arm09}). Hence we may assume $w=\gamma$. Since $\mathcal{L}=[e,\gamma]$ is Cohen-Macaulay, the first equation follows from \cite[Proposition 3.8.6]{Sta12}, while the second equation is a consequence of \cite[Theorem 3.14.2]{Sta12}.
\end{proof}

Similarly, for each $k=2, \dots, n$ we define 
\begin{equation}\label{eq: Dk}
     \mathcal{D}_{[k-1]}:=\Big\{ (t_1,\dots, t_{k}) \Bigm|
     \begin{matrix}
       &  \text{\, $\gamma=t_1t_2\cdots t_n$ is $T$-reduced and \, } \\
       &t_1\succ \dots \succ t_{k-1}\succ t_{k}\prec t_{k+1}\prec \dots\prec t_{n}
     \end{matrix}
      \Big\}, 
\end{equation}
that is,  $\mathcal{D}_{[k-1]}$ is a collection of the first $k$ entries of the labelling sequences $(t_1,t_2, \dots, t_n)$ of the maximal chain $e<t_1<t_1t_2< \dots < t_1t_2\cdots t_n$ such that $(t_1,t_2, \dots, t_k)$ is decreasing, and $(t_k,t_{k+1}, \dots, t_n)$ is the labelling of the unique increasing maximal chain of the interval $[t_1t_2,\dots t_{k-1}, \gamma]$. Here the uniqueness follows from the definition of the EL-labelling.

\begin{proposition}\label{prop: dimtoprksel}
 For each $k=1, \dots , n-1$, let $\mathcal{L}_{[k]}=\{w\in \mathcal{L}\mid 1\leq \ell_{T}(w)\leq k\}$ be the rank-selected subposet of $\mathcal{L}$. Then 
   \[ {\rm rank}\, \widetilde{H}_{k-1}(\mathcal{L}_{[k]})=(-1)^{k+1}\mu(\hat{\mathcal{L}}_{[k]})=|\mathcal{D}_{[k]}|.  \]
\end{proposition}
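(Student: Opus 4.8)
The plan is to mimic the structure of the proof of \propref{prop: dimtop}, reducing to the case $W$ irreducible and then using the Cohen--Macaulay property together with the EL-labelling to identify the rank of the top homology group with a count of certain decreasing-increasing labelling sequences. First I would recall the two standard facts from \cite{Sta12}: for a Cohen--Macaulay poset $P$ of rank $n$ and any $S\subseteq [n-1]$, the rank-selected subposet $P_S$ (or rather its bounded extension) is again Cohen--Macaulay of rank $|S|$, so $\widetilde H_i(P_S)=0$ unless $i=|S|-1$ and $\mathrm{rank}\,\widetilde H_{|S|-1}(P_S)=(-1)^{|S|+1}\mu(\hat P_S)$, where $\hat P_S$ is $P_S$ with a new bottom and top adjoined. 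Since $\mathcal{L}=[e,\gamma]$ has rank $n$ and is Cohen--Macaulay by \thmref{thm: edgelab}, applying this with $P=\bar{\mathcal{L}}$ and $S=[k]$ (so $|S|=k$) gives the first equality $\mathrm{rank}\,\widetilde H_{k-1}(\mathcal{L}_{[k]})=(-1)^{k+1}\mu(\hat{\mathcal{L}}_{[k]})$ immediately. Strictly one should be a little careful that the notation $\mathcal{L}_{[k]}$ in the statement already excludes $\hat 0=e$ and $\hat 1=\gamma$ (it only retains ranks $1,\dots,k$), so $\hat{\mathcal{L}}_{[k]}$ is the correct bounded poset to which Stanley's formula applies.

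For the second equality, the key tool is the standard fact \cite[Theorem 3.14.2]{Sta12} (or equivalently the theory of EL-labellings, cf. \cite{Bjo}) that for an EL-shellable poset, $(-1)^{|S|+1}\mu(\hat P_S)$ equals the number of maximal chains of $P$ (here of $\mathcal{L}$, which has rank $n$) whose descent set, with respect to the EL-labelling $\lambda$, is exactly $S$. So I would take $S=[k]=\{1,\dots,k\}$: a maximal chain $e<t_1<t_1t_2<\dots<t_1t_2\cdots t_n=\gamma$ of $\mathcal{L}$ has descent set $[k]$ precisely when $t_1\succ t_2\succ\dots\succ t_k\succ$ (no descent at position $k$, i.e.) $t_k\prec t_{k+1}\prec\dots\prec t_n$ — wait, one must track the convention: a descent at $i$ means $\lambda_i\succ\lambda_{i+1}$, so descent set $[k]=\{1,\dots,k\}$ forces descents at $1,\dots,k$ (giving $t_1\succ t_2\succ\dots\succ t_{k+1}$) and ascents at $k+1,\dots,n-1$. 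I should instead take $S=[k-1]$ paired with the indexing in \eqref{eq: Dk}: a chain has descent set contained in the initial segment with the last descent at $k-1$ and then strictly increasing afterwards exactly when $t_1\succ\dots\succ t_{k-1}$, $t_{k-1}\succ t_k$, and $t_k\prec t_{k+1}\prc\dots\prec t_n$, i.e. descent set $[k-1]$; but $|[k-1]|=k-1$, which would give homology in degree $k-2$, not $k-1$. The resolution is that $\mathcal{D}_{[k]}$ as defined in \eqref{eq: Dk} (reading it with the shift $k\mapsto k+1$) records exactly the first $k+1$ entries of chains with descent set $[k]$, and the map sending such a full maximal chain of $\mathcal{L}$ to its truncation $(t_1,\dots,t_{k+1})$ is a bijection onto $\mathcal{D}_{[k]}$: injectivity and surjectivity both follow because the tail $(t_{k+1},\dots,t_n)$ is forced to be the unique increasing maximal chain of the interval $[t_1\cdots t_k,\gamma]$, whose existence and uniqueness is exactly property~(1) of the EL-labelling. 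Hence $|\mathcal{D}_{[k]}|$ counts the maximal chains of $\mathcal{L}$ with descent set $[k]$, which equals $(-1)^{k+1}\mu(\hat{\mathcal{L}}_{[k]})$.

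The reduction to $W$ irreducible should be handled exactly as in \propref{prop: dimtop}: if $W=\prod_i W_i$ then $\mathcal{L}=\prod_i\mathcal{L}(W_i)$ as graded posets, the EL-labelling is a product labelling by \cite{Bjo}, and both the Möbius-function side and the descent-counting side are multiplicative over the factors in a way compatible with the rank-selection; alternatively one can simply cite the Cohen--Macaulayness of $\mathcal{L}$ in full generality (which is already recorded in the excerpt) and apply Stanley's formulas directly without decomposing, since those formulas do not require irreducibility. I would phrase the proof so as to only invoke (i) $\mathcal{L}$ is Cohen--Macaulay and EL-shellable, (ii) \cite[Theorem 3.14.2]{Sta12} / \cite[Proposition 3.8.6]{Sta12}, and (iii) the bijection described above between truncated maximal chains and $\mathcal{D}_{[k]}$. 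The main obstacle, and the only genuinely non-bookkeeping step, is verifying that bijection carefully — in particular matching the indexing convention of \eqref{eq: Dk} (which lists $k$ entries $t_1,\dots,t_k$ with the break at position $k$) against the descent-set $S$ that produces homology precisely in degree $k-1$, and confirming that the tail of every such sequence extends uniquely to a full maximal chain of $\mathcal{L}$ via the increasing chain in $[t_1\cdots t_{k-1},\gamma]$. Once the conventions are pinned down this is a direct consequence of the defining property of EL-labellings, so I expect the write-up to be short.
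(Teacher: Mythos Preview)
Your approach is correct and essentially identical to the paper's: both invoke Cohen--Macaulayness of the rank-selected subposet (the paper via \cite{Bac80}, you via \cite{Sta12}), then combine \cite[Proposition~3.8.6]{Sta12} for the first equality with \cite[Theorem~3.14.2]{Sta12} for the second. The paper's proof is a two-line citation without unpacking the descent-set bijection; your added verification that $\mathcal{D}_{[k]}$ is in bijection with maximal chains of $\mathcal{L}$ having descent set exactly $[k]$ (via the unique increasing extension in $[t_1\cdots t_k,\gamma]$) is correct and indeed clarifies what the citations are doing---just clean up the indexing discussion, since your eventual resolution (that $\mathcal{D}_{[k]}$ records the first $k+1$ labels of chains with descent set $[k]$) is right and the intermediate back-and-forth can be dropped.
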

\begin{proof}
It is a result of \cite[Theorem 6.4]{Bac80} that $\mathcal{L}_{[k]}$ is a Cohen-Macaulay poset. Hence the proposition follows from the combination of \cite[Proposition 3.8.6]{Sta12} and  \cite[Theorem 3.14.2]{Sta12}.
\end{proof}

From a topological point of view,  the order complex $\Delta(e,w)$ of any interval $(e,w)$ of $\mathcal{L}$ has the homotopy type of a bouquet of spheres, so does $\Delta(\mathcal{L}_{[k]})$ for each $k$ (see \cite[Theorem 2.3]{Bjo} and \cite[Theorem 1.3]{Bjo84}).

\section{Construction of the top homology cycles}\label{sec: cons}

In this section we give an explicit construction of the top homology cycles of  $\widetilde{H}_{\ell_T(w)-2}(e,w)$.  To this end, we make extensive use of the Hurwitz action on the chains of the lattice $\mathcal{L}$. 

\subsection{The Hurwitz action}

For each $k=0, \dots, n$, let  $ \mathcal{L}_k:=\{ w\in \mathcal{L} \mid \ell_{T}(w)=k \}$. 
For any $w\in \mathcal{L}_k$, define the set
\[ {\rm Rex}_{T}(w):=\{(t_1,t_2,\dots, t_k)\,|\, w=t_1t_2\cdots t_k \text{\, is $T$-reduced} \}.\]
In particular, ${\rm Rex}_{T}(e):=\emptyset$, the empty set.
The set ${\rm Rex}_{T}(w)$ is in bijection with the set of maximal chains  in the interval $[e,w]$ of $\mathcal{L}$, as 
each sequence $(t_1,t_2,\dots, t_k)$ may be thought of as the edge labelling of the  maximal chain $e<t_1<t_1t_2< \dots< t_1t_2\cdots t_k$ in $[e,w]$.

For any $w\in \mathcal{L}_{k}$, let $C_{k-1}(w)$ be the abelian group freely spanned by all  sequences $(t_1,\dots ,t_k)$ of ${\rm Rex}_T(w)$. In particular, $C_{-1}(e):=\mathbb{Z}$.  Let $B_k$ denote the braid group of $k$ strings with standard generators $\sigma_i, 1\leq i\leq k-1$ subject to the following relations
\begin{equation*}\label{eq: braidrel}
  \begin{aligned}
  \sigma_i\sigma_j&=\sigma_j\sigma_i, &\quad& |i-j|\geq 2,\\
  \sigma_i\sigma_{i+1}\sigma_i&=\sigma_{i+1}\sigma_i\sigma_{i+1}, &\quad& 1\leq i\leq k-2.
\end{aligned}
\end{equation*}
For each integer $k\geq 2$, the Hurwitz action of $B_k$ on $C_{k-1}(w)$ \cite{Bes03} is defined  by 
\begin{equation}\label{eq: Braidact}
    \sigma_i.(t_1, \dots,t_{i-1}, t_i, t_{i+1},t_{i+2}, \dots, t_k):=(t_1, \dots,t_{i-1}, t_{i+1}, t_{i}^{t_{i+1}}, t_{i+2},\dots, t_k)
\end{equation}
for $1\leq i\leq k-1$. If $k=1$, the trivial braid group $B_1$ acts trivially on $C_0(t)$ for any $t\in T$.

We introduce a useful $\mathbb{Z}$-linear transformation on $C_{k-1}(w)$ for $k\geq 2$. Let ${\rm Sym}_k$ be the symmetric group on $k$ letters with standard generators $s_i=(i,i+1), 1\leq i\leq k-1$. There is  a set-theoretic lift map:
\begin{equation*}
   \varphi:  {\rm Sym}_k \rightarrow B_k,\quad \pi=s_{i_1} \cdots s_{i_p}\mapsto \underline{\pi}:= \sigma_{i_1}\cdots \sigma_{i_p},
\end{equation*}
  where $\pi=s_{i_1} \cdots s_{i_p}\in {\rm Sym}_k$ is a reduced expression in standard generators. This is independent of the choice of reduced expression of $\pi$. 
 For any ${\bf t}=(t_1,t_2,\dots,t_k)\in {\rm Rex}_{T}(w)$, we  define the following $\mathbb{Z}$-linear map using the Hurwitz action \eqref{eq: Braidact}: 
 \begin{equation*}\label{eq: beta}
    \beta: C_{k-1}(w) \rightarrow C_{k-1}(w),\quad {\bf t}\mapsto \beta_{{\bf t}}:=\sum_{\pi\in {\rm Sym}_k} {\rm sgn}(\pi)\, \underline{\pi}.(t_1,t_2,\dots,t_k),
 \end{equation*} 
where ${\rm sgn}$ is the usual sign character  of ${\rm Sym}_k$. This can be   viewed as a  $\mathbb{Z}$-linear map on the  abelian group freely spanned by maximal chains of the  interval $[e,w]$.  As a convention, we define $\beta_{t}=\beta_{(t)}:=(t)$ for any $t\in T$ and $\beta_{\emptyset}=1$.

For any two sequences ${\bf t}=(t_1, \dots, t_k)$ and ${\bf t}=(t_1^{\prime}, \dots, t_{l}^{\prime})$, we write $({\bf t}, {\bf t}^{\prime})$ for the concatenation of $\bf{t}$ and ${\bf t}^{\prime}$. We extend the  concatenation $\mathbb{Z}$-bilinearly, i.e. $(\sum_{{\bf t}}\lambda_{{\bf t}} {\bf t}, \sum_{{\bf t}'}\mu_{{\bf t}'} {\bf t}')= \sum_{{\bf t}, {\bf t}'}\lambda_{{\bf t} } \mu_{{\bf t}'}({\bf t},{\bf t}')$ for $\lambda_{{\bf t} },  \mu_{{\bf t}'}\in \mathbb{Z}$. 
 
 The following lemma will be useful for the purpose of induction. 

\begin{lemma}\label{lem: betasum} 
Let ${\bf t}=(t_1,t_2,\dots,t_k)\in {\rm Rex}_{T}(w)$ with $w\leq \gamma$ and $2\leq k\leq n$. Denote by
\[ {\bf t}(\hat{i}):=(t_1,\dots,t_{i-1},t_{i+1},\dots,t_{k})\]
  the sequence obtained by removing  the $i$-th term of ${\bf t}$. 
 \begin{enumerate}
   \item  The sequence ${\bf t}(\hat{i})$ is the edge labelling of the following chain in the lattice $\mathcal{L}$:
     \[ t_1< t_1t_2<\dots <t_1\cdots t_{i-1} < t_1\cdots t_{i-1}t_{i+1} < \dots < t_1\cdots t_{i-1}t_{i+1}\cdots t_k. \]
     
   \item  $\beta_{{\bf t}}=\sum_{i=1}^k (-1)^{k-i} (\beta_{{\bf t}(\hat{i})},  \, t_i^{t_{i+1}\cdots t_k} )$;
   \item  $\beta_{{\bf t}}=\sum_{i=1}^{k} (-1)^{i-1}(t_i, \beta_{(t_1^{t_i},\dots, t_{i-1}^{t_i},\, t_{i+1},\dots,t_k)})$. 
 \end{enumerate}

\end{lemma}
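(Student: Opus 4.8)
The plan is to prove all three parts essentially by induction on $k$, exploiting the compatibility of the Hurwitz action with concatenation. I would first record the trivial observation underlying part (1): the Hurwitz action changes the labelling of a maximal chain but not the chain's endpoints, and more generally the partial products $t_1\cdots t_j$ for $j<i$ and the partial products $t_1\cdots t_{i-1}t_{i+1}\cdots t_j$ for $j\ge i$ agree with the partial products of ${\bf t}$ after the $i$-th slot is deleted; the $T$-reducedness of ${\bf t}(\hat i)$ as a factorisation of $t_1\cdots t_{i-1}t_{i+1}\cdots t_k\le w\le\gamma$ follows since any subword of a $T$-reduced word for an element of $\mathcal L$ is again $T$-reduced (an element below $\gamma$ has a well-defined absolute length additive along the chain). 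So part (1) is immediate.

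For part (2), the key identity is the coset decomposition ${\rm Sym}_k=\bigsqcup_{i=1}^k {\rm Sym}_{k-1}\cdot c_i$, where ${\rm Sym}_{k-1}$ fixes the last letter $k$ and $c_i$ is the cycle sending the block $(i,i+1,\dots,k)$ cyclically so that $i\mapsto k$; one has ${\rm sgn}(c_i)=(-1)^{k-i}$. Under the lift $\varphi$, $\underline{c_i}=\sigma_{k-1}\sigma_{k-2}\cdots\sigma_i$ (a reduced word), and applying the Hurwitz action \eqref{eq: Braidact} one checks directly that $\underline{c_i}.(t_1,\dots,t_k)=({\bf t}(\hat i),\, t_i^{t_{i+1}\cdots t_k})$: the reflection originally in position $i$ is dragged rightward to the end, getting conjugated successively by $t_{i+1},\dots,t_k$, while the remaining entries shift left one slot and are untouched. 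Then for $\pi=\pi'c_i$ with $\pi'\in{\rm Sym}_{k-1}$, $\underline{\pi}.(t_1,\dots,t_k)=\underline{\pi'}.\big({\bf t}(\hat i),\, t_i^{t_{i+1}\cdots t_k}\big)$, and since $\underline{\pi'}$ only uses $\sigma_1,\dots,\sigma_{k-2}$ it acts on the first $k-1$ slots, so by the bilinearity of concatenation this equals $\big(\underline{\pi'}.{\bf t}(\hat i),\, t_i^{t_{i+1}\cdots t_k}\big)$. Summing over $\pi'\in{\rm Sym}_{k-1}$ with signs produces $\beta_{{\bf t}(\hat i)}$ in the first coordinate, and the overall sign ${\rm sgn}(\pi)={\rm sgn}(\pi'){\rm sgn}(c_i)$ contributes the factor $(-1)^{k-i}$; summing over $i$ gives (2). (The statement that $\underline{\pi'}$ acting on a concatenated sequence acts only on the left factor should be isolated as a small sublemma, since the whole argument hinges on it; it follows because $\underline{\pi'}$ is a word in $\sigma_1,\dots,\sigma_{k-2}$ and each such $\sigma_j$ acts within the first $k-1$ entries.)

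Part (3) is the mirror-image statement and I would prove it the same way, using instead the coset decomposition ${\rm Sym}_k=\bigsqcup_{i=1}^k d_i\cdot{\rm Sym}_{k-1}'$ where ${\rm Sym}_{k-1}'$ fixes the first letter and $d_i$ brings position $i$ to the front; here $\underline{d_i}=\sigma_1\sigma_2\cdots\sigma_{i-1}$ and one computes $\underline{d_i}.(t_1,\dots,t_k)=(t_i,\, t_1^{t_i},\dots,t_{i-1}^{t_i},\, t_{i+1},\dots,t_k)$, with ${\rm sgn}(d_i)=(-1)^{i-1}$; then a cofactor $\underline{\pi'}$ for $\pi'\in{\rm Sym}_{k-1}'$ uses only $\sigma_2,\dots,\sigma_{k-1}$ and hence acts on the last $k-1$ slots, pulling $\beta$ out on the right. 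Alternatively one can deduce (3) from (2) by symmetry of the braid relations under reversal, but the direct argument is cleaner to write. The main obstacle is the bookkeeping in verifying the two explicit formulas $\underline{c_i}.{\bf t}=({\bf t}(\hat i),\,t_i^{t_{i+1}\cdots t_k})$ and $\underline{d_i}.{\bf t}=(t_i,\,t_1^{t_i},\dots,t_{i-1}^{t_i},\,t_{i+1},\dots,t_k)$: these are straightforward inductions on the length of the braid word $\sigma_{k-1}\cdots\sigma_i$ (resp.\ $\sigma_1\cdots\sigma_{i-1}$) unwinding \eqref{eq: Braidact} one generator at a time, but care is needed to track the order in which conjugations accumulate. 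Everything else is formal manipulation of coset representatives and the concatenation pairing.
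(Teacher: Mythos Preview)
Your argument is correct and follows essentially the same route as the paper: for part (2) both you and the paper use the right coset decomposition ${\rm Sym}_k=\bigsqcup_i {\rm Sym}_{k-1}\cdot\zeta_i$ with minimal representatives $\zeta_i=s_{k-1}\cdots s_i$ (your $c_i$), lift to $\sigma_{k-1}\cdots\sigma_i$, compute its Hurwitz action explicitly, and sum. The paper then dismisses part (3) with ``can be proved similarly,'' which is exactly what you spell out.

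One slip to fix in part (3): you write the decomposition as $\bigsqcup_i d_i\cdot{\rm Sym}_{k-1}'$, but with $d_i=s_1\cdots s_{i-1}$ these are \emph{not} left coset representatives of ${\rm Sym}_{k-1}'$ (for instance $d_2^{-1}d_3=s_2\in{\rm Sym}_{k-1}'$). Your verbal description---apply $\underline{d_i}$ first to bring $t_i$ to the front, then let $\underline{\pi'}$ act on the last $k-1$ slots---requires instead the factorisation $\pi=\pi'd_i$, i.e.\ the \emph{right} coset decomposition ${\rm Sym}_k=\bigsqcup_i{\rm Sym}_{k-1}'\cdot d_i$. With $d_i=s_1\cdots s_{i-1}$ these are indeed the minimal right coset representatives (one checks $d_i^{-1}(2)<d_i^{-1}(3)<\dots<d_i^{-1}(k)$), so lengths add, $\underline{\pi}=\underline{\pi'}\,\underline{d_i}$, and your argument goes through exactly as you describe it.
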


\begin{proof}
  For part (1),  using the Hurwitz action \eqref{eq: Braidact} we have
 \begin{equation}\label{eq: seqHact}
  \sigma_{k-1}\dots\sigma_{i}. {\bf t}=(t_1,\dots,t_{i-1},t_{i+1},\dots,t_{k}, t_i^{t_{i+1}\cdots t_k})=({\bf t}(\hat{i}), t_i^{t_{i+1}\cdots t_k}),
\end{equation} 
 Since the  Hurwitz action preserves  $T$-reduced expressions, the sequence ${\bf t}(\hat{i})$ is the labelling for the chain of $\mathcal{L}$ given in part (1).

  We turn to prove part (2).  Let ${\rm Sym}_{k-1}$ be the subgroup of ${\rm Sym}_k$ generated by $s_i=(i,i+1), 1\leq i \leq k-2$. Then the  minimal right coset representatives of ${\rm Sym}_{k-1}$ in ${\rm Sym}_k$ are  $\zeta_i=s_{k-1}s_{k-2}\cdots s_{i}, 1\leq i \leq k-1$ and  $\zeta_k=(1)$. Therefore we may write each $\pi \in {\rm Sym}_k$ uniquely as $\pi=\pi^{\prime} \zeta_{i}$ for some $i$, where $\pi^{\prime}\in {\rm Sym}_{k-1}$ and ${\rm sgn}(\pi)=(-1)^{k-i}{\rm sgn}(\pi^{\prime})$. Notice that  the lift of $\zeta_i$ in $B_k$ is $\underline{\zeta}_i=\sigma_{k-1}\cdots \sigma_i$.  Using \eqref{eq: seqHact} we have 
   \[    \underline{\pi}.{\bf t}= \underline{\pi}^{\prime} \sigma_{k-1}\cdots \sigma_i.{\bf t}
       =\underline{\pi}^{\prime}.({\bf t}(\hat{i}),\, t_i^{t_{i+1}\cdots t_k} ),   \]
  where $\underline{\pi}^{\prime}\in B_{k-1}$ only acts on the sequence ${\bf t}(\hat{i})$.
  It follows that 
   \begin{align*}
     \beta_{{\bf t}} &= \sum_{\pi\in {\rm Sym}_k} {\rm sgn}(\pi) \underline{\pi}.{\bf t}=\sum_{i=1}^{k} \sum_{\pi^{\prime}\in {\rm Sym}_{k-1}} (-1)^{k-i}{\rm sgn}(\pi^{\prime}) \underline{\pi}^{\prime}. ({\bf t}(\hat{i}),\, t_i^{t_{i+1}\cdots t_k} )\\
   &=\sum_{i=1}^k (-1)^{k-i} (\beta_{{\bf t}(\hat{i})},\, t_i^{t_{i+1}\cdots t_k})
   \end{align*}
as required. Part (3) can be proved similarly.
\end{proof}

\subsection{Construction of the homology cycle}
We will construct explicitly the  cycles of the top homology groups of intervals in $\mathcal{L}$. For each  $k=0,1,\dots, n$, we define  
\begin{equation}\label{eq: chaingp}
	C_{k-1}=C_{k-1}(\mathcal{L}):=\bigoplus_{w\in \mathcal{L}_k} C_{k-1}(w).
\end{equation}
Then $C_{k-1}$ is  a free  abelian group with a $\mathbb{Z}$-basis $\bigcup_{w\in \mathcal{L}_k} {\rm Rex}_T(w)$. Define $\mathcal{C}:=\bigoplus_{k=0}^n C_{k-1}$.

The following free abelian group  will play an important role in the construction.

\begin{definition}\label{def: AlgB}
  For each $w\in \mathcal{L}_k$ with $0\leq k\leq n$, we define $\mathcal{B}_w$ to be the  subgroup of $C_{k-1}(w)$ spanned by all  $\beta_{{\bf t}}, {\bf t} \in {\rm Rex}_{T}(w)$, that is, 
 \[ \mathcal{B}_{w}:={\rm Im}\, \beta=\sum_{{\bf t}\in {\rm Rex}_{T}(w)} \mathbb{Z}\beta_{{\bf t}} \subseteq C_{k-1}(w).\]
In particular $\mathcal{B}_e:=\mathbb{Z}$.  We define $\mathcal{B}:=\bigoplus_{k=0}^n \bigoplus_{w\in \mathcal{L}_k}\mathcal{B}_w$ and $\mathcal{B}_k:= \bigoplus_{w\in \mathcal{L}_k}\mathcal{B}_w (0\leq k\leq n)$, which are free abelian subgroups of $\mathcal{C}$ and $C_{k-1} (0\leq k\leq n)$, respectively.
\end{definition}

In general,  the  elements $\beta_{{\bf t}}, {\bf t}\in {\rm Rex}_{T}(w)$  are  not $\mathbb{Z}$-linearly independent. We will show in \thmref{thm: homint} that some of them form a basis for $\mathcal{B}_w$.

We proceed next to study the connection between $\mathcal{B}$ and the homology of the lattice $\mathcal{L}$. 
For any $w\in \mathcal{L}_k$, let 
$\widetilde{H}_{k-2}(e,w):=\widetilde{H}_{k-2}(\Delta(e,w); \mathbb{Z})$
be the top reduced homology group of the open interval $(e,w)$. For each integer $k=2,\dots, n$, define the $\mathbb{Z}$-linear truncation  $d_{k-1}$ by
\begin{equation}\label{eq: theta}
 d_{k-1}: C_{k-1}\rightarrow C_{k-2}, \quad (t_1,t_2,\dots,t_{k-1},t_k)\mapsto (t_1,t_2,\dots, t_{k-1}),  
\end{equation}
and  $d_0(t)=1, \forall t\in T$. We will write $d=d_{k-1}$ whenever no confusion arises.

%%&= \sum_{(t_1, \dots, t_k)\in {\rm Rex}_T(w)}\lambda_{(t_1, \dots, t_k)}(t_1, \dots, t_{k-1},t_k), $t_k$ is uniquely determined since $w=t_1\cdots t_{k-1}t_k$.

\begin{lemma}\label{lem: restheta}
  For any $w\in \mathcal{L}_{k}$ with $k\geq 1$, the restriction $d|_{C_{k-1}(w)}$ is injective.
 \end{lemma}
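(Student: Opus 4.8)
The claim is that for fixed $w \in \mathcal{L}_k$ with $k \geq 1$, the truncation $d = d_{k-1}\colon C_{k-1} \to C_{k-2}$, which sends $(t_1,\dots,t_k) \mapsto (t_1,\dots,t_{k-1})$, is injective when restricted to $C_{k-1}(w)$. The key observation is that the basis of $C_{k-1}(w)$ is $\mathrm{Rex}_T(w)$, and two distinct $T$-reduced expressions of the \emph{same} element $w$ that agree in their first $k-1$ coordinates must already be equal: if $(t_1,\dots,t_{k-1},t_k)$ and $(t_1,\dots,t_{k-1},t_k')$ both multiply to $w$, then $t_1\cdots t_{k-1} t_k = t_1\cdots t_{k-1} t_k'$ forces $t_k = t_k'$ after cancelling the common prefix in $W$. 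Hence $d$ maps the basis $\mathrm{Rex}_T(w)$ of $C_{k-1}(w)$ \emph{injectively} onto a set of distinct basis elements of $C_{k-2}$ (each image $(t_1,\dots,t_{k-1})$ lies in $C_{k-2}(t_1\cdots t_{k-1})$, and $t_1\cdots t_{k-1} \leq w$ so this is a genuine basis vector of $C_{k-2}$). An injective map between bases extends to an injective map of the free abelian groups they span.

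\textbf{Steps, in order.} First I would recall that $C_{k-1}(w)$ is free on $\mathrm{Rex}_T(w)$ and that $C_{k-2} = \bigoplus_{v \in \mathcal{L}_{k-1}} C_{k-2}(v)$ is free on $\bigcup_{v \in \mathcal{L}_{k-1}} \mathrm{Rex}_T(v)$. Second, I would check that $d$ sends each basis element $\mathbf{t} = (t_1,\dots,t_k) \in \mathrm{Rex}_T(w)$ to the basis element $(t_1,\dots,t_{k-1})$ of $C_{k-2}$: since $w = t_1\cdots t_k$ is $T$-reduced, its prefix $t_1\cdots t_{k-1}$ is $T$-reduced of length $k-1$ and satisfies $t_1\cdots t_{k-1} \leq w \leq \gamma$, so it lies in $\mathcal{L}_{k-1}$ and $(t_1,\dots,t_{k-1}) \in \mathrm{Rex}_T(t_1\cdots t_{k-1})$. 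Third — the crux — I would show $d$ is injective on this basis: if $d(\mathbf{t}) = d(\mathbf{t}')$ for $\mathbf{t} = (t_1,\dots,t_k)$, $\mathbf{t}' = (t_1',\dots,t_k') \in \mathrm{Rex}_T(w)$, then $t_i = t_i'$ for $i \leq k-1$; multiplying out, $t_1\cdots t_{k-1} t_k = w = t_1\cdots t_{k-1} t_k'$ in $W$, and left-cancellation in the group $W$ gives $t_k = t_k'$, so $\mathbf{t} = \mathbf{t}'$. Finally, a $\mathbb{Z}$-linear map that carries one $\mathbb{Z}$-basis injectively into another $\mathbb{Z}$-basis is injective, which completes the proof.

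\textbf{Main obstacle.} There is essentially no serious obstacle here; the only thing one must be slightly careful about is the bookkeeping of which summand of $C_{k-2}$ the image lands in — namely verifying that $(t_1,\dots,t_{k-1})$ is genuinely a basis element of the \emph{direct sum} $C_{k-2} = \bigoplus_{v \in \mathcal{L}_{k-1}} C_{k-2}(v)$ and not merely of some $C_{k-2}(v)$ for an ambient $v$, which is exactly the point that $t_1\cdots t_{k-1} \leq w \leq \gamma$ keeps everything inside $\mathcal{L}$. One should also note that $d$ in general is \emph{not} injective on all of $C_{k-1}$ (expressions of \emph{different} elements $w \neq w'$ can share a prefix), which is why the statement is phrased as a restriction to a single $C_{k-1}(w)$; this is worth a one-line remark so the reader sees why the hypothesis $w \in \mathcal{L}_k$ fixed is needed.
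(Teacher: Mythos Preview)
Your proof is correct and follows essentially the same approach as the paper: both arguments hinge on the observation that for $(t_1,\dots,t_k)\in\mathrm{Rex}_T(w)$ the last coordinate is determined by the others via $t_k=(t_1\cdots t_{k-1})^{-1}w$, so $d$ sends the basis $\mathrm{Rex}_T(w)$ injectively into the basis of $C_{k-2}$. Your version is slightly more careful in verifying that the truncated sequence actually lands in $\mathcal{L}_{k-1}$, which the paper takes for granted.
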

 \begin{proof}
Assume  there exist integers  $\lambda_{(t_1, \dots, t_k)}$ such that
\[
\begin{aligned}
0&=\sum_{(t_1, \dots, t_k)\in {\rm Rex}_T(w)}d(\lambda_{(t_1, \dots, t_k)}(t_1, \dots, t_k))=\sum_{(t_1, \dots, t_k)\in {\rm Rex}_T(w)}\lambda_{(t_1, \dots, t_k)}(t_1, \dots, t_{k-1}).\\
\end{aligned}
 \]
Note that  each sequence $(t_1, \dots , t_{k-1})$ can be uniquely extended to  $(t_1, \dots, t_{k-1}, t_k)$, where   $t_k= t_{k-1}\cdots t_1 w$.
It follows that the elements   $(t_1, \dots, t_{k-1})$  on the far right hand side are distinct elements of the basis for $C_{k-2}$, whence all coefficients $\lambda_{(t_1, \dots, t_k)}$ are equal to $0$. Therefore,  the restriction $d|_{C_{k-1}(w)}$ is injective.
 \end{proof}
 
 Since $\mathcal{B}_w\subseteq C_{k-1}(w)$, we may restrict $d$ to  $\mathcal{B}_w$ and  define the element
 \begin{equation*}\label{eq: defz}
   z_{{\bf t}}:=d(\beta_{\bf t}), \quad \forall {\bf t}\in {\rm Rex}_T(w).
 \end{equation*}
 We have the following important observations.

 \begin{proposition}\label{prop: cycle}
  Let $w\in \mathcal{L}_{k}$ with $k\geq 1$. 
   \begin{enumerate}
     \item  We have $z_{{\bf t}}\in \widetilde{H}_{k-2}(e,w)$ for any ${\bf t}\in {\rm Rex}_{T}(w)$.
     \item The $\mathbb{Z}$-linear map 
     \begin{equation}\label{eq: linearBH}
       d: \mathcal{B}_w \rightarrow \widetilde{H}_{k-2}(e,w), \quad \beta_{{\bf t}} \mapsto z_{{\bf t}}, \, {\bf t}\in {\rm Rex}_{T}(w)
     \end{equation}
         is injective.
 
     \item 
     We have the following expression:
     \begin{equation}\label{eq: zexp}
     z_{{\bf t}}=\sum_{i=1}^k (-1)^{k-i} \beta_{{\bf t}(\hat{i})},
   \end{equation}
   where ${\bf t}(\hat{i}):=(t_1,\dots, \hat{t}_i,\dots,t_k)$ is obtained by removing the $i$-th entry of ${\bf t}$.
   \end{enumerate}
 \end{proposition}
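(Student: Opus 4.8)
The plan is to prove the three parts in the order (3), (1), (2), since the explicit formula in (3) is the computational engine that powers the other two. For part (3), I would start from \lemref{lem: betasum}(2), which gives
\[ \beta_{{\bf t}}=\sum_{i=1}^k (-1)^{k-i} (\beta_{{\bf t}(\hat{i})},\, t_i^{t_{i+1}\cdots t_k}). \]
Applying the truncation map $d$ to both sides and using its definition \eqref{eq: theta}, the last entry $t_i^{t_{i+1}\cdots t_k}$ of each concatenated term is exactly what gets deleted, so $d(\beta_{{\bf t}(\hat{i})},\, t_i^{t_{i+1}\cdots t_k}) = \beta_{{\bf t}(\hat{i})}$ (here one uses that $d$ is $\mathbb{Z}$-linear and that $\beta_{{\bf t}(\hat{i})}$ is a linear combination of sequences of length $k-1$, each of which acquires $t_i^{t_{i+1}\cdots t_k}$ as its final entry under concatenation). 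This immediately yields \eqref{eq: zexp}. A small bookkeeping point: one must check the identity ${\bf t}(\hat i)\in\mathrm{Rex}_T(w')$ for the appropriate $w'\lessdot\cdots$, but this is precisely \lemref{lem: betasum}(1).

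For part (1), I need to show $z_{{\bf t}}$ is a cycle in the simplicial chain complex of $\Delta(e,w)$, i.e. that $\partial z_{{\bf t}} = 0$ where $\partial$ is the simplicial boundary map on the order complex of the open interval. Here I would identify a sequence ${\bf t}(\hat i) = (t_1,\dots,\hat t_i,\dots,t_k)$ of length $k-1$ with the $(k-2)$-simplex given by the chain $t_1 < t_1t_2 < \cdots < \widehat{t_1\cdots t_i} < \cdots < t_1\cdots t_{k-1}\cdots$ (wait---more precisely the partial-product chain with the $i$-th partial product omitted), which is a maximal simplex of $\Delta(e,w)$ since it has $k-1$ vertices in a poset of rank $k$ whose proper part has dimension $k-2$. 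Then $z_{{\bf t}} = \sum_{i=1}^k (-1)^{k-i}\beta_{{\bf t}(\hat i)}$ is a $(k-2)$-chain, and since $\Delta(e,w)$ has dimension $k-2$ (by \propref{prop: dimtop}, its top homology sits in degree $k-2$ and it is Cohen--Macaulay of that dimension), \emph{every} $(k-2)$-chain is automatically a cycle. So part (1) reduces to the observation that $\beta_{{\bf t}}\in C_{k-1}(w)$ lands, after truncation, in top-dimensional chains, which follows from part (3) together with the dimension count. I expect this to be the cleanest of the three parts once the dictionary between sequences and simplices is set up carefully.

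For part (2), injectivity of $d\colon\mathcal{B}_w\to\widetilde H_{k-2}(e,w)$: the map $\mathcal{B}_w \hookrightarrow C_{k-1}(w) \xrightarrow{\ d\ } C_{k-2}$ is the restriction of $d$ to $\mathcal{B}_w$, and by \lemref{lem: restheta} the map $d|_{C_{k-1}(w)}$ is injective; since $\mathcal{B}_w \subseteq C_{k-1}(w)$, the restriction $d|_{\mathcal{B}_w}$ is injective as a map into $C_{k-2}$. The only remaining point is that the image actually lies in $\widetilde H_{k-2}(e,w)$ viewed as the cycle group $Z_{k-2}(\Delta(e,w)) = \ker\partial_{k-2}$ inside the top chain group --- but in the top dimension $\ker\partial_{k-2}$ equals the whole top chain group (no $(k-1)$-chains exist to kill anything), and by part (1) every $z_{{\bf t}}$ is such a chain; so the codomain in \eqref{eq: linearBH} is literally a subgroup of $C_{k-2}$ and injectivity is inherited. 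The main obstacle, such as it is, is purely notational: one must be scrupulous about the identification of a length-$(k-1)$ reflection sequence with an oriented simplex of $\Delta(e,w)$ and about why $\widetilde H_{k-2}(e,w)$ may be taken to \emph{be} the top chain group rather than a quotient --- everything else is an immediate consequence of \lemref{lem: betasum}, \lemref{lem: restheta}, and \propref{prop: dimtop}.
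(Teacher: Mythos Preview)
Your arguments for parts (3) and (2) are essentially correct and match the paper: (3) follows by applying $d$ to the identity in \lemref{lem: betasum}(2), and (2) follows from \lemref{lem: restheta} once (1) is in hand, since $\widetilde H_{k-2}(e,w)=\ker\partial_{k-2}$ is a subgroup of the top chain group (no $(k-1)$-simplices, hence no boundaries to quotient by).

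However, your proof of part (1) has a genuine gap. You assert that ``since $\Delta(e,w)$ has dimension $k-2$, every $(k-2)$-chain is automatically a cycle,'' and repeat in part (2) that ``in the top dimension $\ker\partial_{k-2}$ equals the whole top chain group.'' This is false. The absence of $(k-1)$-simplices gives $\mathrm{Im}\,\partial_{k-1}=0$, hence $\widetilde H_{k-2}(e,w)=\ker\partial_{k-2}$ with no quotient; but $\ker\partial_{k-2}$ is in general a \emph{proper} subgroup of the top chain group. Concretely, for $k=3$ the complex $\Delta(e,w)$ is a graph, and a single edge is a top-dimensional chain which is not a cycle. So being top-dimensional is not enough: one must actually verify $\partial(z_{\bf t})=0$, and this is the real content of part (1), which your plan skips.

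The paper carries out this verification by a direct cancellation. Writing $z_{\bf t}=\sum_{\pi\in{\rm Sym}_k}{\rm sgn}(\pi)\,d(\underline{\pi}.{\bf t})$, it shows $\partial^i(z_{\bf t})=0$ for each face map $\partial^i$, $1\le i\le k-1$, by pairing $\pi$ with $\eta=(i,i+1)\pi$. One checks that $\partial^i d(\underline{\eta}.{\bf t})=\partial^i d(\underline{\pi}.{\bf t})$: the Hurwitz move $\sigma_i$ replaces $(t_i^\pi,t_{i+1}^\pi)$ by $(t_{i+1}^\pi,\,(t_i^\pi)^{t_{i+1}^\pi})$, and after applying $\partial^i$ (which multiplies the $i$-th and $(i{+}1)$-st labels) followed by $d$, both give the same product $t_i^\pi t_{i+1}^\pi$ in position $i$. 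Since ${\rm sgn}(\eta)=-{\rm sgn}(\pi)$, the paired terms cancel. The case $i=k-1$ is handled analogously.
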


 Let us recall some facts before proving \propref{prop: cycle}. For any $w\in \mathcal{L}_{k}$,  all maximal chains in the open interval $(e,w)$ are of the form $t_1< t_1t_2< \dots < t_1t_2\cdots t_{k-1}$, which can be identified with the sequence $\bar{{\bf t}}:=(t_1,t_2,\dots, t_{k-1})$. These maximal chains correspond to the $(k-2)$-simplices in the order complex $\Delta(e,w)$ of dimension $k-2$.  Then we may rewrite the usual boundary operator on simplices as 
 \begin{equation*}\label{eq: boundary}
      \partial(\bar{{\bf t}})=\sum_{i=1}^{k-1}(-1)^{i-1}\partial^i(\bar{{\bf t}}), 
 \end{equation*}
 where $\partial^i(\bar{{\bf t}})$ amounts to removing the $i$-th term in the maximal chain $t_1< t_1t_2< \dots< t_1t_2\cdots t_{k-1}$ and hence is defined as 
 \begin{equation*}\label{eq: partial}
      \partial^i(\bar{{\bf t}}):=
    \begin{cases}
       (t_1,\dots,t_{i-1}, t_it_{i+1},t_{i+2},\dots, t_{k-1}), &\quad 1\leq i\leq k-2,\\
       (t_1,t_2,\dots,t_{k-2}), &\quad i=k-1. 
    \end{cases}
 \end{equation*}

 \begin{proof}[Proof of Proposition~\ref{prop: cycle}]
  For part (1), we  need to prove that  
  $$\partial(z_{{\bf t}})=\sum_{i=1}^{k-1}(-1)^{i-1} \partial^i(z_{{\bf t}})=0.$$
  By the definition of $\partial$  it suffices to show that  
  \begin{equation}\label{eq: parz}
    \partial^i(z_{{\bf t}})=\sum_{\pi \in {\rm Sym}_k} {\rm sgn}(\pi)\, \partial^i  d (\underline{\pi}.(t_1, t_2, \dots,t_{k}))=0, \quad  1\leq i\leq k-1.
  \end{equation}
  First we suppose that $i\leq k-2$. If $\pi \in {\rm Sym}_k$ let $\eta=(i,i+1)\pi $. Now two cases arise. If $(i,i+1)$ is a left descent of $\eta$, then $\underline{\eta}=\sigma_i \underline{\pi}$, where $\sigma_i, \underline{\pi}\in B_k$ are lifts of $(i,i+1)$ and $\pi$, respectively. Denoting $(t_1^{\pi},t_2^{\pi}, \dots ,t_{k}^{\pi}):=\underline{\pi}.(t_1, t_2, \dots,t_{k})$, we have 
  \[
    \begin{aligned}
         \partial^id  (\underline{\eta}.(t_1, t_2, \dots,t_{k}))&=\partial^id  (\sigma_i. (t_1^{\pi},t_2^{\pi}, \dots ,t_{k}^{\pi})),\\
          &= \partial^id (t_1^{\pi},\dots, t_{i-1}^{\pi}, t_{i+1}^{\pi}, t_{i+1}^{\pi}t_{i}^{\pi}t_{i+1}^{\pi}, t_{i+2}^{\pi} ,\dots , t_{k}^{\pi})\\
          &= (t_1^{\pi},\dots, t_{i-1}^{\pi}, t_{i}^{\pi}t_{i+1}^{\pi}, t_{i+2}^{\pi} ,\dots , t_{k-1}^{\pi})\\
          &= \partial^id  (\underline{\pi} .(t_1, t_2, \dots,t_{k})).
    \end{aligned}
  \]
If $(i,i+1)$ is not a left descent of $\eta$, then we swap the roles of $\eta$ and $\pi$ and obtain the same equation as above. Therefore, in both two cases the terms in \eqref{eq: parz} corresponding to $\eta$ and $\pi$ cancel, which implies $\partial^i(z_{{\bf t}})=0$ for all $i\leq k-2$.  The remaining case $i=k-1$  can be proved similarly. 

For part (2), using part (1) we obtain $d(\mathcal{B}_w)\subseteq \widetilde{H}_{k-2}(e,w)$, where the latter is the top homology group of $(e,w)$. As $\mathcal{B}_{w}\subseteq C_{k-1}(w)$,  by \lemref{lem: restheta} the restriction $d|_{\mathcal{B}_w}$ is injective. Part (3) is an immediate consequence of  part (2) of \lemref{lem: betasum}.
\end{proof}

\begin{remark}
  There is a different construction of homology cycles. For  $1\leq i\leq k-1$, by \eqref{eq: Braidact} we have 
  $  \sigma_i^{-1}.(t_1, \dots, t_i, t_{i+1}, \dots, t_k):=(t_1, \dots, t_{i+1}^{t_i}, t_{i},\dots, t_k)$.
  Then we may define a similar element $\beta'_{{\bf t}}$ by 
  \[ \beta'_{{\bf t}}:=\sum_{\pi\in {\rm Sym}_k} {\rm sgn}(\pi)\, \underline{\pi}^{-1}.(t_1,t_2,\dots,t_k),\quad \forall {\bf t}=(t_1,\dots, t_k)\in {\rm Rex}_{T}(w), \]
  where $\underline{\pi}^{-1}= \sigma_{k}^{-1}\cdots \sigma_{1}^{-1}$ for a reduced expression $\pi=s_1\cdots s_k$.
Similarly, we have
  \[z'_{{\bf t}}=d(\beta'_{{\bf t}})=\sum_{i=1}^k (-1)^{k-i} \beta'_{(t_1,\dots,t_{i-1}, t_{i+1}^{t_i}, \dots, t_{k}^{t_i}) } \in \widetilde{H}_{k-2}(e,w).  \]
 Comparing with \eqref{eq: zexp}, the summand on the right hand side of the above equation involves conjugations.  We prefer to use the cycles $z_{{\bf t}}$ rather than $z'_{{\bf t}}$ in this paper. 
\end{remark}

\section{Bases for the top homology groups}\label{sec: basis}

In this section we give a basis for the top homology group $\widetilde{H}_{\ell_{T}-2}(e,w)$, and for the top homology group $\widetilde{H}_{k-1}(\mathcal{L}_{[k]})$ with $1\leq k\leq n-1$. 

\subsection{Basis for the top homology group of an interval}
We assume $[e,w]$ is a closed interval of the NCP lattice $\mathcal{L}$. We will give a $\mathbb{Z}$-basis for the top homotopy group $\widetilde{H}_{k-2}(e,w)$, where $k=\ell_{T}(w)$.  To this end, we will first obtain  a $\mathbb{Z}$-basis for $\mathcal{B}_w$, and then obtain a $\mathbb{Z}$-basis from it  for $\widetilde{H}_{k-2}(e,w)$ via the  the linear map $d$ given  in \eqref{eq: linearBH}.

 Recall from \secref{sec: EL} that the NCP lattice $\mathcal{L}$ admits an EL-labelling, with the total order $\preceq$ on $T$ given in \eqref{eq: total ordering}. For any $w\in \mathcal{L}$, let $\mathcal{D}_{w}$ be as defined in \eqnref{eq: Xik-1}.
 We agree that $\mathcal{D}_{e}=\{\emptyset\}$ and $\beta_{\emptyset}=1$. The elements $\beta_{{\bf t}}, {\bf t}\in \mathcal{D}_w$  are called \emph{decreasing elements}.

For any two sequences ${\bf t}=(t_1, t_2, \dots , t_k)$ and ${\bf t}'=(t'_1,t'_2,\dots , t'_k)$, we define the lexicographical order by 
\[
  {\bf t}\prec {\bf t}' \iff t_i\prec t'_i\quad  \text{for the smallest $i$ where $t_i$ and $t'_i$ differ}.
\]
Clearly, this is a total order on the set ${\rm Rex}_T(w)$. Recalling that $C_{k-1}(w)$ is the abelian group freely spanned by ${\rm Rex}_T(w)$, we define a symmetric $\mathbb{Z}$-bilinear form $\langle-,-\rangle$ on $C_{k-1}(w)$ by 
\begin{equation}\label{eq: biliform}
  \langle{\bf t}, {\bf t}'\rangle= \delta_{{\bf t}, {\bf t}'}, \quad  \forall {\bf t}, {\bf t}' \in {\rm Rex}_T(w),
\end{equation} 
and then extend this $\mathbb{Z}$-bilinearly. It is obvious that this bilinear form is unimodular, i.e., it defines an isomorphism between $C_{k-1}(w)$ and its dual.

\begin{lemma}\label{lem: maxterm}
 For any sequence ${\bf t}=(t_1,t_2, \dots, t_k)\in \mathcal{D}_w$, the sum $\beta_{{\bf t}}=\sum_{\pi\in {\rm Sym}_k} {\rm sgn}(\pi) \underline{\pi}.{\bf t}$ has a unique maximal term $(t_1,t_2, \dots, t_k)$ with respect to the lexicographical order $\prec$ on ${\rm Rex}_T(w)$. Similarly, the element $z_{{\bf t}}= d(\beta_{{\bf t}})$ has a unique maximal term $(t_1, \dots, t_{k-1})$. 
\end{lemma}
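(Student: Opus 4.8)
\textbf{Proof strategy for Lemma~\ref{lem: maxterm}.}
The plan is to analyze how the Hurwitz action \eqref{eq: Braidact} interacts with the lexicographical order $\prec$ on ${\rm Rex}_T(w)$. The key observation is that a single generator $\sigma_i$ acts on ${\bf t}=(t_1,\dots,t_k)$ by replacing the pair $(t_i, t_{i+1})$ with $(t_{i+1}, t_i^{t_{i+1}})$, leaving all other entries fixed. So if ${\bf t}\in\mathcal{D}_w$ is strictly decreasing, i.e. $t_1\succ t_2\succ\cdots\succ t_k$, then applying $\sigma_i$ to ${\bf t}$ produces a sequence whose $i$-th entry is $t_{i+1}\prec t_i$; since the first $i-1$ entries are unchanged, $\sigma_i.{\bf t}\prec{\bf t}$. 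More generally I would argue that for \emph{any} non-identity braid word $\underline{\pi}$ with $\pi\in{\rm Sym}_k$, one has $\underline{\pi}.{\bf t}\prec{\bf t}$ whenever ${\bf t}$ is strictly decreasing, so that the $\pi=\mathrm{id}$ summand of $\beta_{\bf t}$ contributes the strictly largest term $(t_1,\dots,t_k)$, and no cancellation can destroy it since it appears with coefficient $\mathrm{sgn}(\mathrm{id})=1$ and no other summand reaches it.

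The main step is therefore the claim: \emph{if ${\bf t}=(t_1,\dots,t_k)$ is strictly $\prec$-decreasing and $\pi\in{\rm Sym}_k$ is not the identity, then $\underline{\pi}.{\bf t}\prec{\bf t}$.} I would prove this by induction on the length $\ell(\pi)$ of $\pi$ in the standard generators, using the coset decomposition already exploited in \lemref{lem: betasum}. Write $\pi=\pi'\zeta_i$ with $\zeta_i=s_{k-1}s_{k-2}\cdots s_i$ the minimal coset representative and $\pi'\in{\rm Sym}_{k-1}$; by \eqref{eq: seqHact} we have $\underline{\zeta}_i.{\bf t}=({\bf t}(\hat i),\, t_i^{t_{i+1}\cdots t_k})$. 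If $i<k$ this moves $t_{i+1}$ (which is $\prec t_i$) into position $i$ while keeping positions $1,\dots,i-1$ intact, so already $\underline{\zeta}_i.{\bf t}\prec {\bf t}$; the subsequent action of $\underline{\pi}'\in B_{k-1}$ only permutes/conjugates entries in positions $1,\dots,k-1$ and in particular cannot change the first $i-1$ entries, so the first point of difference with ${\bf t}$ is still at or before position $i$ and the inequality is preserved. (Here one needs that the Hurwitz action never alters an entry to the \emph{left} of where it acts — immediate from \eqref{eq: Braidact}.) The case $i=k$, i.e. $\pi=\pi'\in{\rm Sym}_{k-1}$ fixing the last strand, follows directly from the inductive hypothesis applied to the strictly decreasing sequence $(t_1,\dots,t_{k-1})$, since the last entry is untouched.

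Once the claim is established, the first statement of the lemma is immediate: every summand of $\beta_{\bf t}$ other than the $\pi=\mathrm{id}$ one is $\prec (t_1,\dots,t_k)$, so $(t_1,\dots,t_k)$ occurs in $\beta_{\bf t}$ with coefficient exactly $1$ and is the unique $\prec$-maximal basis element appearing. For the statement about $z_{\bf t}=d(\beta_{\bf t})$, apply the truncation $d$ from \eqref{eq: theta}: by \eqref{eq: zexp} we have $z_{\bf t}=\sum_{i=1}^k(-1)^{k-i}\beta_{{\bf t}(\hat i)}$, but it is cleaner to argue directly that $d$ is order-compatible on the relevant terms. Since $d$ just deletes the last coordinate, and among the sequences appearing in $\beta_{\bf t}$ the maximal one is $(t_1,\dots,t_k)$, its image $(t_1,\dots,t_{k-1})$ dominates the image of any other term $(s_1,\dots,s_k)$ with $(s_1,\dots,s_k)\prec(t_1,\dots,t_k)$ \emph{provided} the first point of difference occurs before position $k$ — which the claim guarantees, because the decreasing sequence is determined by its first $k-1$ entries (the last entry being forced as $t_k=t_{k-1}\cdots t_1 w$, cf. \lemref{lem: restheta}). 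Hence $(t_1,\dots,t_{k-1})$ is the unique maximal term of $z_{\bf t}$, and by \lemref{lem: restheta} it again occurs with coefficient $1$.

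The part I expect to require the most care is verifying that no summand $\underline{\pi}.{\bf t}$ with $\pi\ne\mathrm{id}$ can \emph{equal} $(t_1,\dots,t_k)$ after all cancellations — but this is subsumed in the strict inequality $\underline{\pi}.{\bf t}\prec{\bf t}$, so the real work is entirely in the inductive proof of that inequality, specifically in controlling what happens to the leading entries under repeated Hurwitz moves. The potential subtlety is that conjugation $t_i\mapsto t_i^{t_{i+1}\cdots t_k}$ could in principle produce a reflection that is $\succ$ some earlier entry; but since such conjugations only ever affect positions to the right of the current point of difference, they never interfere with the comparison at the leading coordinate, which is what makes the induction go through.
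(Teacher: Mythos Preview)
Your overall strategy---proving the sharper claim that every $\underline{\pi}.{\bf t}$ with $\pi\ne\mathrm{id}$ is strictly $\prec{\bf t}$---is sound, but there is a genuine gap in the inductive step. You assert that after applying $\underline{\zeta}_i$, the subsequent action of $\underline{\pi}'\in B_{k-1}$ ``cannot change the first $i-1$ entries''. This is false: $\underline{\pi}'$ acts by Hurwitz moves on all of positions $1,\dots,k-1$ and can certainly alter positions $1,\dots,i-1$. For instance with $k=3$, $i=2$, $\pi'=s_1$ one computes $\underline{s_1}\,\underline{\zeta}_2.{\bf t}=(t_3,\,t_1^{t_3},\,t_2^{t_3})$, whose first entry is $t_3\ne t_1$. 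The fix is to invoke the inductive hypothesis on the sequence ${\bf t}(\hat i)=(t_1,\dots,t_{i-1},t_{i+1},\dots,t_k)$, which is itself strictly decreasing: by induction on $k$ one has $\underline{\pi}'.{\bf t}(\hat i)\preceq{\bf t}(\hat i)$, and since ${\bf t}(\hat i)\prec(t_1,\dots,t_{k-1})$ (first difference at position $i$, where $t_{i+1}\prec t_i$), transitivity yields $\underline{\pi}.{\bf t}\prec{\bf t}$. So your induction should be on $k$, not on $\ell(\pi)$, and the inequality comes from transitivity rather than from fixing leading entries.

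The paper's argument is shorter and avoids this subtlety: it uses \lemref{lem: betasum}(3) to write $\beta_{\bf t}=\sum_{i=1}^k(-1)^{i-1}(t_i,\,\beta_{(t_1^{t_i},\dots,t_{i-1}^{t_i},t_{i+1},\dots,t_k)})$, observes that since $t_1\succ t_i$ for $i>1$ the maximal term must start with $t_1$ and hence lie in the summand $(t_1,\beta_{(t_2,\dots,t_k)})$, and concludes by induction on $k$. For $z_{\bf t}$ the paper uses the expansion $z_{\bf t}=\sum_i(-1)^{k-i}\beta_{{\bf t}(\hat i)}$ and applies the first part to each $\beta_{{\bf t}(\hat i)}$ (each ${\bf t}(\hat i)$ being decreasing), whereas your argument via injectivity of $d$ on $C_{k-1}(w)$ is correct and a bit more direct.
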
 
\begin{proof}
Use induction on $k$. If $k=1$, then $\beta_{t}=(t)$ for any $t\in T$, so there is nothing to prove. 
By  \lemref{lem: betasum}, we have $\beta_{{\bf t}}=\sum_{i=1}^{k} (-1)^{i-1}(t_i, \beta_{(t_1^{t_i},\dots, t_{i-1}^{t_i},\, t_{i+1},\dots,t_k)})$. Since $t_1\succ t_2\succ \dots \succ t_k$, the maximal term of $\beta_{{\bf t}}$ begins with $t_1$ and hence appears in  $(t_1,\beta_{(t_2, \dots, t_k)})$. By induction hypothesis, the sum $(t_1, \beta_{(t_2, \dots, t_k)})$ has the unique maximal term $(t_1, t_2, \dots, t_k)$. Therefore, $(t_1, t_2, \dots, t_k)$ is the unique maximal term of the sum $\beta_{{\bf t}}$. 

Recall from \propref{prop: cycle} that $z_{{\bf t}}=d(\beta_{{\bf t}})= \sum_{i=1}^{k} (-1)^{k-i} \beta_{{\bf t}(\hat{i})}$. Note that ${\bf t}(\hat{i})=(t_1, \dots, \hat{t}_i, \dots, t_k)$ is a decreasing sequence. Hence each $\beta_{{\bf t}(\hat{i})}$ has a unique maximal term ${\bf t}(\hat{i})$. Among these ${\bf t}(\hat{i})$ the maximal one is ${\bf t}(\hat{k})=(t_1,\dots, t_{k-1})$, which is the unique maximal term of $z_{{\bf t}}$.
\end{proof}

The following result will be used frequently. 

\begin{lemma}\label{lem: bz}
Let $ \langle -,- \rangle $ be the bilinear form on $C_{k-1}(w)$ as defined in \eqref{eq: biliform}, where $k=\ell_{T}(w)$. 
\begin{enumerate}
  \item $\langle \beta_{\bf t}, {\bf t}\rangle =\langle z_{{\bf t}}, {\bf t}(\hat{k})  \rangle = 1$ for any ${\bf t}\in \mathcal{D}_w$.
  \item  $\langle z_{{\bf t}}, \tilde{{\bf t}}(\hat{k}) \rangle=0$ for any pair  ${\bf t}\prec \tilde{{\bf t}}$ of $\mathcal{D}_w$. 
\end{enumerate} 
\end{lemma}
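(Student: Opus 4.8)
\textbf{Plan of proof for Lemma~\ref{lem: bz}.}
The strategy is to deduce both parts from the maximal-term analysis already established in \lemref{lem: maxterm}, combined with a careful study of which sequences can possibly appear in the expansion of $z_{{\bf t}}$ for ${\bf t}\in \mathcal{D}_w$. First I would treat part (1). By \lemref{lem: maxterm}, the sum $\beta_{{\bf t}}$ has $(t_1,\dots,t_k)={\bf t}$ as its unique maximal term with respect to $\prec$; since $\underline{\pi}.{\bf t}$ ranges over distinct sequences in ${\rm Rex}_T(w)$ with signs, and the identity permutation contributes exactly $+1\cdot {\bf t}$, pairing against ${\bf t}$ in the unimodular form \eqref{eq: biliform} gives $\langle\beta_{{\bf t}},{\bf t}\rangle = 1$ provided no other $\underline{\pi}.{\bf t}$ equals ${\bf t}$; but if $\underline{\pi}.{\bf t}={\bf t}$ for $\pi\neq e$ this would contradict maximality (two equal top terms would force cancellation, yet \lemref{lem: maxterm} asserts a genuine maximal term survives). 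For the $z$-statement, recall from \eqref{eq: zexp} that $z_{{\bf t}}=\sum_{i=1}^k (-1)^{k-i}\beta_{{\bf t}(\hat i)}$ and that, by the proof of \lemref{lem: maxterm}, the unique maximal term of $z_{{\bf t}}$ is ${\bf t}(\hat k)=(t_1,\dots,t_{k-1})$, arising solely from the $i=k$ summand $\beta_{{\bf t}(\hat k)}$ (whose own maximal term is ${\bf t}(\hat k)$ by part (1) applied in rank $k-1$, since ${\bf t}(\hat k)\in\mathcal{D}_{w'}$ for the relevant $w'$). Hence $\langle z_{{\bf t}}, {\bf t}(\hat k)\rangle = \langle\beta_{{\bf t}(\hat k)}, {\bf t}(\hat k)\rangle = 1$.

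For part (2), suppose ${\bf t}\prec\tilde{{\bf t}}$ are both in $\mathcal{D}_w$, and I want $\langle z_{{\bf t}}, \tilde{{\bf t}}(\hat k)\rangle = 0$. The point is that every sequence appearing in $z_{{\bf t}}$ has length $k-1$ and lies in ${\rm Rex}_T(w)$ (as $d$ composed with the Hurwitz action preserves $T$-reducedness, truncating the last entry). Since the maximal term of $z_{{\bf t}}$ is ${\bf t}(\hat k)=(t_1,\dots,t_{k-1})$, every sequence $\bar{{\bf s}}$ occurring in $z_{{\bf t}}$ satisfies $\bar{{\bf s}}\preceq (t_1,\dots,t_{k-1})$. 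On the other hand, I claim $\tilde{{\bf t}}(\hat k)=(\tilde t_1,\dots,\tilde t_{k-1})\succ (t_1,\dots,t_{k-1})$: indeed ${\bf t}\prec\tilde{{\bf t}}$ means there is a smallest index $j$ with $t_j\prec \tilde t_j$; because both are strictly decreasing sequences, once the first $j-1$ entries agree and $\tilde t_j\succ t_j$, the truncated sequences $(t_1,\dots,t_{k-1})$ and $(\tilde t_1,\dots,\tilde t_{k-1})$ still first differ at position $j$ (note $j\leq k-1$: if $j=k$ the truncations would be equal, but then a decreasing sequence of length $k-1$ extends uniquely to a $T$-reduced factorisation of $w$, forcing $t_k=\tilde t_k$ and ${\bf t}=\tilde{{\bf t}}$, contrary to ${\bf t}\prec\tilde{{\bf t}}$), hence $(\tilde t_1,\dots,\tilde t_{k-1})\succ(t_1,\dots,t_{k-1})$. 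Therefore $\tilde{{\bf t}}(\hat k)$ is strictly larger than the maximal term of $z_{{\bf t}}$, so its coefficient in $z_{{\bf t}}$ is $0$, giving $\langle z_{{\bf t}},\tilde{{\bf t}}(\hat k)\rangle = 0$.

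\textbf{Main obstacle.} The only genuinely delicate point is the comparison of truncated decreasing sequences in part (2), specifically ruling out $j=k$ (i.e.\ that two distinct elements of $\mathcal{D}_w$ cannot have the same first $k-1$ entries). This rests on the uniqueness-of-extension observation already used in the proof of \lemref{lem: restheta}: a sequence $(t_1,\dots,t_{k-1})$ determines $t_k = t_{k-1}\cdots t_1 w$ uniquely. Once that is in hand, everything else is bookkeeping with the lexicographic order and the maximal-term statements from \lemref{lem: maxterm}; I would write part (1) first, since part (2)'s argument reuses the identification of the maximal term of $z_{{\bf t}}$.
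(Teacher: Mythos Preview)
Your proposal is correct and follows essentially the same route as the paper: both parts are deduced from \lemref{lem: maxterm}, and for part~(2) you prove the key inequality ${\bf t}(\hat k)\prec\tilde{{\bf t}}(\hat k)$ by the same uniqueness-of-extension argument (ruling out agreement in the first $k-1$ entries), exactly as the paper does. One small point to clean up: your justification for $\langle\beta_{{\bf t}},{\bf t}\rangle=1$ via ``two equal top terms would force cancellation'' is not quite right (if ${\rm sgn}(\pi)=+1$ there is no cancellation); it is simpler, as the paper does, to observe that the inductive proof of \lemref{lem: maxterm} already shows the maximal term occurs with coefficient~$1$.
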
 
\begin{proof}
It follows from the proof of  \lemref{lem: maxterm} that the coefficient of the maximal term of $\beta_{{\bf t}}$ is $1$, and similarly for $z_{{\bf t}}$. This implies part (1). For part (2), note that the maximal term of $z_{{\bf t}}$ is ${\bf t}(\hat{k})$. We claim that ${\bf t}(\hat{k})\prec \tilde{{\bf t}}(\hat{k})$, so all terms in the expression of $z_{{\bf t}}$ are strictly smaller than $\tilde{{\bf t}}(\hat{k})$. This implies that   $\langle z_{{\bf t}}, \tilde{{\bf t}}(\hat{k}) \rangle=0$. 

It remains to prove our claim. Assume for contradiction that ${\bf t}(\hat{k})\succeq \tilde{{\bf t}}(\hat{k})$.  If ${\bf t}(\hat{k})\succ \tilde{{\bf t}}(\hat{k})$, then we have ${\bf t}\succ \tilde{{\bf t}}$, which contradicts the condition  ${\bf t}\prec \tilde{{\bf t}}$. If ${\bf t}(\hat{k})= \tilde{{\bf t}}(\hat{k})$, then the $k$-th components of ${\bf t}$ and $\tilde{{\bf t}}$ are also equal, as both  ${\bf t}$ and $\tilde{{\bf t}}$ represent $T$-reduced factorisations of $w$. It follows that ${\bf t}= \tilde{{\bf t}}$, which is a contradiction. Therefore, we have the claim that ${\bf t}(\hat{k})\prec \tilde{{\bf t}}(\hat{k})$.
\end{proof} 

\begin{proposition}\label{prop: decindp}
  For any $w\in \mathcal{L}$, the decreasing elements $\beta_{{\bf t}}, {\bf t}\in \mathcal{D}_w$ are $\mathbb{Z}$-linearly independent. 
\end{proposition}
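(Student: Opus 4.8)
The plan is to use the triangular structure of the decreasing elements with respect to the lexicographic order $\prec$ on $\operatorname{Rex}_T(w)$, combined with the unimodular bilinear form $\langle-,-\rangle$ defined in \eqref{eq: biliform}. Suppose we have a nontrivial $\mathbb{Z}$-linear relation $\sum_{{\bf t}\in \mathcal{D}_w} \lambda_{{\bf t}}\, \beta_{{\bf t}} = 0$ with not all $\lambda_{{\bf t}}$ equal to zero. Among the ${\bf t}\in \mathcal{D}_w$ with $\lambda_{{\bf t}}\neq 0$, choose the one, say ${\bf t}_0$, that is \emph{maximal} with respect to $\prec$.

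Next I would pair the relation with the basis vector ${\bf t}_0\in \operatorname{Rex}_T(w)$ using the bilinear form. By \lemref{lem: maxterm}, each $\beta_{{\bf t}}$ for ${\bf t}\in \mathcal{D}_w$ has ${\bf t}$ as its unique maximal term (with coefficient $1$ by \lemref{lem: bz}(1), or directly from the proof of \lemref{lem: maxterm}), and every other term appearing in $\beta_{{\bf t}}$ is strictly smaller than ${\bf t}$ in the lexicographic order. Hence $\langle \beta_{{\bf t}}, {\bf t}_0\rangle = 0$ whenever ${\bf t}\prec {\bf t}_0$ (the maximal term ${\bf t}$ is already too small, so ${\bf t}_0$ cannot appear at all), and $\langle \beta_{{\bf t}_0}, {\bf t}_0\rangle = 1$. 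For ${\bf t}\succ {\bf t}_0$ with $\lambda_{{\bf t}}\neq 0$: by maximality of ${\bf t}_0$ there are none. Applying $\langle -, {\bf t}_0\rangle$ to the relation therefore yields
\[
0 = \Big\langle \sum_{{\bf t}} \lambda_{{\bf t}}\, \beta_{{\bf t}},\ {\bf t}_0 \Big\rangle = \sum_{{\bf t}} \lambda_{{\bf t}}\, \langle \beta_{{\bf t}}, {\bf t}_0\rangle = \lambda_{{\bf t}_0},
\]
contradicting $\lambda_{{\bf t}_0}\neq 0$. This proves the $\mathbb{Z}$-linear independence. The degenerate case $w=e$ is trivial since $\mathcal{D}_e=\{\emptyset\}$ and $\beta_{\emptyset}=1$.

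I do not expect any serious obstacle here: the key input — that $\beta_{{\bf t}}$ is "unitriangular" with leading term ${\bf t}$ with respect to a total order on a basis — has already been established in \lemref{lem: maxterm} and \lemref{lem: bz}, and a triangular family over $\mathbb{Z}$ is automatically $\mathbb{Z}$-linearly independent. The only point requiring a little care is confirming that all non-leading terms of $\beta_{{\bf t}}$ are \emph{strictly} below ${\bf t}$ (so that pairing against the leading index of the $\prec$-largest surviving basis vector isolates a single coefficient); this is exactly the uniqueness-of-maximal-term assertion in \lemref{lem: maxterm}. An essentially identical argument will later give independence of the cycles $z_{{\bf t}}$, using \lemref{lem: bz}(2) in place of the observation above.
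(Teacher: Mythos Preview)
Your argument is correct and is essentially identical to the paper's own proof: both isolate the $\prec$-maximal index ${\bf t}_0$ with nonzero coefficient and use the unitriangularity of $\beta_{\bf t}$ (from \lemref{lem: maxterm} and \lemref{lem: bz}) to extract $\lambda_{{\bf t}_0}=0$ via the bilinear form, yielding a contradiction.
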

\begin{proof}
 Assume that we have a finite sum  $\sum_{{\bf t}} \lambda_{{\bf t}} \beta_{{\bf t}}=0$ for some nonzero $\lambda_{{\bf t}}\in \mathbb{Z}$ and ${\bf t}\in \mathcal{D}_w$. Using the lexicographical order, we may arrange the above equation into 
\[ \lambda_{\tilde{{\bf t}}} \beta_{\tilde{{\bf t}}}= -\sum_{{\bf t}\prec  \tilde{{\bf t}}} \lambda_{{\bf t}} \beta_{{\bf t}},  \]
where $\tilde{{\bf t}}$ is the unique maximal decreasing sequence appearing in the sum  $\sum_{{\bf t}} \lambda_{{\bf t}} \beta_{{\bf t}}$. By \lemref{lem: maxterm}, for any ${\bf t}\prec \tilde{{\bf t}}$, $\beta_{\bf t}$ has a unique maximal term ${\bf t}$. Hence $\langle \beta_{{\bf t}}, \tilde{{\bf t}}\rangle =0$ for any decreasing sequence ${\bf t}\prec \tilde{{\bf t}}$. It follows that 
\[ \lambda_{\tilde{{\bf t}}}= \langle \lambda_{\tilde{{\bf t}}} \beta_{\tilde{{\bf t}}}, \tilde{{\bf t}} \rangle = - \sum_{{\bf t}\prec  \tilde{{\bf t}}} \lambda_{{\bf t}} \langle \beta_{{\bf t}}, \tilde{{\bf t}}\rangle=0, \]
which contradicts our assumption. Therefore, the elements $\beta_{{\bf t}}, {\bf t}\in \mathcal{D}_w$ are $\mathbb{Z}$-linearly independent. 
\end{proof} 

\begin{corollary}\label{coro: rankBw}
  For any $w\in \mathcal{L}$ we have 
   \[
     {\rm rank}\, \mathcal{B}_w = |\mathcal{D}_w|= (-1)^{\ell_{T}(w)}\mu(w),
   \]
   where $\mu $ denotes the M\"obius function of $\mathcal{L}$.
\end{corollary}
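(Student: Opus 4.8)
The plan is to combine the two facts already assembled in this section. First, by \propref{prop: decindp}, the decreasing elements $\beta_{{\bf t}}$ with ${\bf t}\in \mathcal{D}_w$ are $\mathbb{Z}$-linearly independent elements of $\mathcal{B}_w$. This immediately gives the lower bound ${\rm rank}\, \mathcal{B}_w \geq |\mathcal{D}_w|$, and it remains only to establish the reverse inequality, namely that the $\beta_{{\bf t}}$, ${\bf t}\in \mathcal{D}_w$, actually span $\mathcal{B}_w$ over $\mathbb{Z}$ (or at least over $\mathbb{Q}$, which suffices for the rank computation). By definition $\mathcal{B}_w = \sum_{{\bf s}\in {\rm Rex}_T(w)} \mathbb{Z}\beta_{{\bf s}}$, so it is enough to show that every $\beta_{{\bf s}}$ with ${\bf s}\in {\rm Rex}_T(w)$ lies in the span of the decreasing $\beta_{{\bf t}}$.

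The key step is therefore a straightening argument: given an arbitrary ${\bf s}\in {\rm Rex}_T(w)$, I would show that $\beta_{{\bf s}}$ can be rewritten as a $\mathbb{Z}$-linear combination of $\beta_{{\bf t}}$ with ${\bf t}$ strictly smaller in lexicographical order, or decreasing. The natural tool is the recursion in \lemref{lem: betasum}(3), $\beta_{{\bf s}}=\sum_{i=1}^{k} (-1)^{i-1}(s_i, \beta_{(s_1^{s_i},\dots, s_{i-1}^{s_i},\, s_{i+1},\dots,s_k)})$, together with the antisymmetry built into $\beta$: if two consecutive entries of a sequence ${\bf s}$ satisfy $s_i \prec s_{i+1}$ then applying a transposition argument as in the proof of \propref{prop: cycle}(1) shows $\beta_{{\bf s}}$ can be traded (up to sign and lower-order terms) against $\beta$ of a sequence with those entries swapped; more precisely $\sigma_i.{\bf s}$ contributes with opposite sign in the alternating sum. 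Iterating, one can sort the entries into decreasing order at the cost of conjugations and of terms whose leading component has dropped; an induction on the lexicographically-ordered finite set ${\rm Rex}_T(w)$ then shows every $\beta_{{\bf s}}$ lies in the span of the decreasing ones. This gives ${\rm rank}\, \mathcal{B}_w \leq |\mathcal{D}_w|$, hence equality, and also proves the $\beta_{{\bf t}}, {\bf t}\in \mathcal{D}_w$, form a $\mathbb{Z}$-basis.

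Finally, the identity $|\mathcal{D}_w| = (-1)^{\ell_T(w)}\mu(w)$ is exactly the content of \propref{prop: dimtop}, which was proved using the Cohen-Macaulay property of $\mathcal{L}$; nothing further is needed there. So the corollary follows by chaining ${\rm rank}\, \mathcal{B}_w = |\mathcal{D}_w| = (-1)^{\ell_T(w)}\mu(w)$.

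I expect the main obstacle to be the spanning statement — making the straightening/sorting induction genuinely work over $\mathbb{Z}$ rather than just $\mathbb{Q}$, and in particular checking that the coefficients produced when swapping out-of-order adjacent entries are $\pm 1$ so that no denominators appear. A cleaner alternative that sidesteps the delicate combinatorics: pair $\mathcal{B}_w$ against $\widetilde{H}_{k-2}(e,w)$ via the injective map $d$ of \propref{prop: cycle}(2), observe by \lemref{lem: bz} that the matrix $\big(\langle z_{{\bf t}}, \tilde{{\bf t}}(\hat k)\rangle\big)_{{\bf t},\tilde{{\bf t}}\in\mathcal{D}_w}$ is unitriangular with respect to $\prec$ (diagonal entries $1$, entries above the diagonal $0$), so the $z_{{\bf t}}$ are part of a basis of a free group of rank $|\mathcal{D}_w|$; since by \propref{prop: dimtop} that is exactly ${\rm rank}\,\widetilde{H}_{k-2}(e,w)$, the $z_{{\bf t}}$ span a finite-index subgroup, and pulling back through the injection $d$ forces ${\rm rank}\,\mathcal{B}_w = |\mathcal{D}_w|$. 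This is probably the route I would actually write up, as it only uses lemmas already in hand.
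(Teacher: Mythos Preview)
Your proposal is correct, and your second route is essentially the paper's argument, though you have made it more elaborate than necessary. The paper's proof is a clean two-line sandwich: the injection $d:\mathcal{B}_w\hookrightarrow \widetilde{H}_{k-2}(e,w)$ from \propref{prop: cycle}(2) immediately gives ${\rm rank}\,\mathcal{B}_w\le {\rm rank}\,\widetilde{H}_{k-2}(e,w)$, and the latter equals $|\mathcal{D}_w|=(-1)^{\ell_T(w)}\mu(w)$ by \propref{prop: dimtop}; the lower bound comes from \propref{prop: decindp} exactly as you say. That is all. You do not need the unitriangular matrix from \lemref{lem: bz} here --- the rank of the target is already known, so injectivity alone bounds the rank of the source. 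The unitriangularity is used later, in \thmref{thm: homint}, to upgrade the rank equality to the stronger statement that the $\beta_{\bf t}$, ${\bf t}\in\mathcal{D}_w$, are an honest $\mathbb{Z}$-basis (not merely a finite-index sublattice).

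Your first route, the straightening argument, is a genuinely different idea, and if it worked it would prove more than the corollary asks (it would give the $\mathbb{Z}$-basis statement directly). But as you yourself flag, the combinatorics of swapping adjacent out-of-order entries via the Hurwitz action is delicate: the swap $\sigma_i.{\bf s}$ replaces $(s_i,s_{i+1})$ by $(s_{i+1},s_i^{s_{i+1}})$, and there is no guarantee that the conjugate $s_i^{s_{i+1}}$ sits where you want it in the total order $\preceq$, so a naive lexicographic induction can loop. The paper avoids this entirely by arguing indirectly through the homology rank.
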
 
\begin{proof}
Using \propref{prop: dimtop}, we have ${\rm rank}\, \widetilde{H}_{k-2}(e,w)=|\mathcal{D}_w|= (-1)^{\ell_{T}(w)}\mu(w)$. Recall from \propref{prop: cycle} that the linear map $d: \mathcal{B}_w \rightarrow \widetilde{H}_{k-2}(e,w)$ is injective. This implies that
  ${\rm rank}\, \mathcal{B}_w\leq {\rm rank}\, \widetilde{H}_{k-2}(e,w)= (-1)^{\ell_{T}(w)}\mu(w)$.
  On the other hand, since  $\beta_{{\bf t}}, {\bf t}\in \mathcal{D}_w$ are $\mathbb{Z}$-linearly independent, we have ${\rm rank}\, \mathcal{B}_w\geq |\mathcal{D}_w|= (-1)^{\ell_{T}(w)}\mu(w)$. It follows that ${\rm rank}\, \mathcal{B}_w= |\mathcal{D}_w|= (-1)^{\ell_{T}(w)}\mu(w)$. 
\end{proof}

\begin{theorem}\label{thm: homint}
 For any $w\in \mathcal{L}$, let $\mathcal{B}_{w}$ and $\mathcal{D}_w$ be as defined in \defref{def: AlgB} and \eqref{eq: Xik-1}, respectively. 
 \begin{enumerate}
   \item The elements  $\beta_{{\bf t}}, {\bf t}\in \mathcal{D}_w$ constitute a $\mathbb{Z}$-basis for $ \mathcal{B}_w$;
   \item  The elements $z_{{\bf t}}, {\bf t}\in \mathcal{D}_w$ are a $\mathbb{Z}$-basis for $ \widetilde{H}_{k-2}(e,w)$, where $k=\ell_T(w)\geq 1$; 
   \item The $\mathbb{Z}$-linear map 
   \begin{equation}\label{eq: Bwiso}
     d: \mathcal{B}_w \rightarrow \widetilde{H}_{k-2}(e,w), \quad \beta_{{\bf t}} \mapsto z_{{\bf t}}, \, {\bf t}\in {\rm Rex}_{T}(w)
   \end{equation}
   is an isomorphism of free abelian groups.
 \end{enumerate}
\end{theorem}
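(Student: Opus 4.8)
The plan is to deduce all three parts in sequence, with parts (1) and (3) essentially already in hand from the preceding results and part (2) following immediately.

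\textbf{Part (1).} By \propref{prop: decindp}, the decreasing elements $\beta_{{\bf t}}, {\bf t}\in \mathcal{D}_w$ are $\mathbb{Z}$-linearly independent, and by construction they all lie in $\mathcal{B}_w$. By \corref{coro: rankBw} we have ${\rm rank}\, \mathcal{B}_w = |\mathcal{D}_w|$, so the decreasing elements form a maximal linearly independent set in a free abelian group of that rank. To upgrade this to a $\mathbb{Z}$-\emph{basis} (not merely a finite-index subgroup) I would argue via the unimodular bilinear form $\langle-,-\rangle$ of \eqref{eq: biliform}: order $\mathcal{D}_w = \{{\bf t}^{(1)}\prec {\bf t}^{(2)}\prec\dots\prec {\bf t}^{(r)}\}$ lexicographically, and observe from \lemref{lem: maxterm} that $\langle \beta_{{\bf t}^{(i)}}, {\bf t}^{(j)}\rangle$ is the identity matrix when $i\le j$ after suitable interpretation — more precisely, $\langle\beta_{{\bf t}^{(i)}},{\bf t}^{(i)}\rangle=1$ and $\langle \beta_{{\bf t}^{(j)}},{\bf t}^{(i)}\rangle=0$ for $j$ with ${\bf t}^{(j)}\prec {\bf t}^{(i)}$, so the transition matrix from $\{{\bf t}^{(i)}\}$-coordinates to the $\beta_{{\bf t}^{(i)}}$ is unitriangular, hence invertible over $\mathbb{Z}$. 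Since the $\beta_{{\bf t}}$ span $\mathcal{B}_w$ by \defref{def: AlgB} and a unitriangular change of coordinates sends a spanning set that is part of a basis to a basis, they form a $\mathbb{Z}$-basis.

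\textbf{Parts (3) and (2).} By \propref{prop: cycle}(2), the map $d: \mathcal{B}_w \to \widetilde{H}_{k-2}(e,w)$, $\beta_{{\bf t}}\mapsto z_{{\bf t}}$, is injective. By \propref{prop: dimtop}, ${\rm rank}\,\widetilde{H}_{k-2}(e,w) = |\mathcal{D}_w| = {\rm rank}\,\mathcal{B}_w$ (the last equality by \corref{coro: rankBw}), so $d$ is an injection between free abelian groups of equal finite rank; this already shows $d(\mathcal{B}_w)$ has finite index. To see $d$ is an \emph{isomorphism} (surjective, not just finite index), I would again invoke a unitriangularity argument, now in $C_{k-2}(w)$ rather than $C_{k-1}(w)$: by \lemref{lem: bz}(1)--(2), $\langle z_{{\bf t}}, {\bf t}(\hat k)\rangle = 1$ and $\langle z_{{\bf t}}, \tilde{{\bf t}}(\hat k)\rangle = 0$ for ${\bf t}\prec\tilde{{\bf t}}$ in $\mathcal{D}_w$. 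The same bilinear-form bookkeeping as in part (1) shows the $z_{{\bf t}}, {\bf t}\in\mathcal{D}_w$ are linearly independent with unitriangular Gram-type matrix, and hence span a direct summand of rank $|\mathcal{D}_w|$ inside $\widetilde{H}_{k-2}(e,w)$; since that target has exactly that rank and is free (being the top homology of a Cohen--Macaulay poset, hence torsion-free by \propref{prop: dimtop} and the bouquet-of-spheres remark), the $z_{{\bf t}}$ form a $\mathbb{Z}$-basis of $\widetilde{H}_{k-2}(e,w)$, giving part (2); and then $d$ carries the basis $\{\beta_{{\bf t}}\}$ of $\mathcal{B}_w$ to the basis $\{z_{{\bf t}}\}$ of $\widetilde{H}_{k-2}(e,w)$, so $d$ is an isomorphism, giving part (3).

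\textbf{Main obstacle.} The only genuine subtlety is passing from ``linearly independent set of the right cardinality'' to ``$\mathbb{Z}$-basis'': over $\mathbb{Z}$ this is not automatic, and both the spanning claim for $\mathcal{B}_w$ and the surjectivity of $d$ onto integral homology (as opposed to onto a finite-index subgroup) need the unimodularity of $\langle-,-\rangle$ together with the unitriangular structure extracted from Lemmas~\ref{lem: maxterm} and \ref{lem: bz}. One must also confirm that $\widetilde{H}_{k-2}(e,w)$ is torsion-free so that "finite index + full rank" forces equality — this is exactly the Cohen--Macaulay / bouquet-of-spheres input already recorded after \propref{prop: dimtoprksel}. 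Everything else is assembling results already proved: \propref{prop: decindp}, \corref{coro: rankBw}, \propref{prop: dimtop}, and \propref{prop: cycle}.
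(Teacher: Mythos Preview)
Your overall strategy coincides with the paper's: both arguments rest on the unitriangularity furnished by \lemref{lem: maxterm} and \lemref{lem: bz}, together with the rank count from \corref{coro: rankBw} and \propref{prop: dimtop}. The paper packages the passage from ``independent of the right rank'' to ``$\mathbb{Z}$-basis'' as a divisibility argument (given $m\beta=\sum_{\bf t}\lambda_{\bf t}\beta_{\bf t}$, peel off the largest ${\bf t}$ using $\langle\beta_{\tilde{\bf t}},\tilde{\bf t}\rangle=1$ and $\langle\beta_{\bf t},\tilde{\bf t}\rangle=0$ for ${\bf t}\prec\tilde{\bf t}$ to force $m\mid\lambda_{\tilde{\bf t}}$, then descend); your direct-summand phrasing in part~(2) is an equivalent and arguably cleaner way to say the same thing.

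There is, however, a genuine gap in your part~(1) as written. The sentence ``a unitriangular change of coordinates sends a spanning set that is part of a basis to a basis'' does not parse: the sequences ${\bf t}^{(i)}\in\mathcal{D}_w$ are basis elements of $C_{k-1}(w)$, not of $\mathcal{B}_w$, so there is no ``change of coordinates'' between $\{{\bf t}^{(i)}\}$ and $\{\beta_{{\bf t}^{(i)}}\}$ inside $\mathcal{B}_w$. What the unitriangularity of $(\langle\beta_{{\bf t}^{(i)}},{\bf t}^{(j)}\rangle)_{i,j}$ actually gives you is that the map $\pi:C_{k-1}(w)\to\mathbb{Z}^r$, $x\mapsto(\langle x,{\bf t}^{(j)}\rangle)_j$, restricts to an isomorphism on $\mathrm{span}\{\beta_{{\bf t}^{(i)}}\}$; hence that span is a direct summand of $C_{k-1}(w)$, and therefore also of the intermediate subgroup $\mathcal{B}_w$. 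Since $\mathcal{B}_w$ is free of the same rank $r$, the complementary summand is zero and you are done. This is exactly the argument you (correctly) give for part~(2); you should simply repeat it here rather than invoke a nonexistent change of basis.
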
 
\begin{proof}
 By \propref{prop: decindp}, the elements $\beta_{{\bf t}}, {\bf t}\in \mathcal{D}_w$ are $\mathbb{Z}$-linearly independent.  
  For part (1), it remains to prove that every element of $\mathcal{B}_w$ is a $\mathbb{Z}$-linear combination of those elements. Let $\mathcal{B}'_{w}$ be the subgroup of $\mathcal{B}_{w}$  spanned by these elements. Then by \corref{coro: rankBw} $\mathcal{B}'_{w}$  is a subgroup of maximal rank. Therefore, every element of the quotient group $\mathcal{B}_{w}/ \mathcal{B}'_{w}$ has finite order. Taking an arbitrary nonzero $\beta\in \mathcal{B}_{w}$, there exists a nonzero integer $m$ such that
  \begin{equation}\label{eq: intbeta}
    m\beta = \sum_{{\bf t}\in \mathcal{D}_{w}} \lambda_{{\bf t}} \beta_{{\bf t}}\in \mathcal{B}'_{w}, \quad \lambda_{{\bf t}}\in \mathbb{Z}.
  \end{equation}
  We need to show that $m| \lambda_{{\bf t}}$ for any ${\bf t}\in \mathcal{D}_{w}$.

Suppose that $\tilde{{\bf t}}$ is the biggest ${\bf t}$ for which $m\nmid \lambda_{{\bf t}}$, that is, $m|\lambda_{{\bf t}}$ for all ${\bf t}\succ\tilde{{\bf t}}$ and $m\nmid \lambda_{\tilde{{\bf t}}}$. Then \eqref{eq: intbeta} can be written as
 \begin{equation*}
     m\beta -\sum_{{\bf t}\succ \tilde{{\bf t}}} \lambda_{{\bf t}}\beta_{{\bf t}}= \lambda_{\tilde{{\bf t}}}\beta_{\tilde{{\bf t}}}+ \sum_{{\bf t}\prec\tilde{{\bf t}}}\lambda_{{\bf t}} \beta_{{\bf t}}.
  \end{equation*} 
 Noting that $\langle \beta_{{\bf t}}, \tilde{{\bf t}} \rangle=0$ for any decreasing sequence ${\bf t}\prec \tilde{{\bf t}}$, we have 
 \[
 \langle  m\beta -\sum_{{\bf t}\succ \tilde{{\bf t}}} \lambda_{{\bf t}}\beta_{{\bf t}}, \tilde{{\bf t}} \rangle = \langle \lambda_{\tilde{{\bf t}}} \beta_{\tilde{{\bf t}}}, \tilde{{\bf t}} \rangle=  \lambda_{\tilde{{\bf t}}},   
 \] 
 The far left hand side is divisible by $m$, as $m|\lambda_{{\bf t}}$ for all ${\bf t}\succ\tilde{{\bf t}}$.   It follows that $m|\lambda_{\tilde{{\bf t}}}$, which contradicts our assumption for $\tilde{{\bf t}}$. Therefore,  we have $m| \lambda_{{\bf t}}$ for any ${\bf t}\in \mathcal{D}_{w}$ and 
 $$\beta=\sum_{{\bf t}\in \mathcal{D}_{w}} \frac{\lambda_{{\bf t}}}{m} \beta_{{\bf t}}\in \mathcal{B}'_{w}, \quad \frac{\lambda_{{\bf t}}}{m}\in \mathbb{Z}.$$
Hence $\mathcal{B}'_{w}=\mathcal{B}_{w}$ and $\{\beta_{{\bf t}}\mid {\bf t}\in \mathcal{D}_{w}\}$ is a basis for $\mathcal{B}_{w}$. 

We turn to prove part (2). As the $\mathbb{Z}$-linear map $d: \mathcal{B}_w \rightarrow \widetilde{H}_{k-2}(e,w)$ is injective by \propref{prop: cycle},  the elements $z_{{\bf t}}=d(\beta_{{\bf t}}), {\bf t}\in \mathcal{D}_{w}$ are $\mathbb{Z}$-linearly independent. We just need to show that every element of $ \widetilde{H}_{k-2}(e,w)$ is a $\mathbb{Z}$-linear combination of $z_{{\bf t}}, {\bf t}\in \mathcal{D}_{w}$. The proof is similar to that in part (1), using the fact that 
$\langle z_{{\bf t}}, {\bf t}(\hat{k})  \rangle = 1$ and $\langle z_{{\bf t}}, \tilde{{\bf t}}(\hat{k}) \rangle=0$ for any decreasing sequences ${\bf t}\prec \tilde{{\bf t}}$ of $\mathcal{D}_w$. The fact is proved in \lemref{lem: bz}.  

Now part (3) is a consequence of the combination of part (1) and part (2).
\end{proof}

\subsection{Basis for the top homology group of a rank-selected subposet}
Now we are concerned with the top reduced homology group 
$\widetilde{H}_{k-1}(\mathcal{L}_{[k]}):=\widetilde{H}_{k-1}(\Delta(\mathcal{L}_{[k]});\mathbb{Z})$ of the  rank-selected subposet $ \mathcal{L}_{[k]}=\{\, w\in \mathcal{L} \mid 1\leq \ell_{T}(w)\leq k\, \}$  for  $ 0\leq k\leq n-1$. The trivial case is $\mathcal{L}_{[0]}:=\emptyset$ and $\widetilde{H}_{-1}(\mathcal{L}_{[0]})=\mathbb{Z}$.

Recall from \defref{def: AlgB}  that $\mathcal{B}$ has a natural $\mathbb{Z}$-grading with the $k$-th homogeneous component given by $\mathcal{B}_k= \bigoplus_{w\in \mathcal{L}_k} \mathcal{B}_w$, where $\mathcal{L}_k$ consists of elements $w\in \mathcal{L}$ with $0\leq \ell_T(w)=k\leq n$.

\begin{lemma}
  For any ${\bf t}\in \bigcup_{w\in \mathcal{L}_{k}} {\rm Rex}_{T}(w)$, we have $z_{{\bf t}}=d(\beta_{{\bf t}})\in \widetilde{H}_{k-2}(\mathcal{L}_{[k-1]})$ . This defines the following $\mathbb{Z}$-linear map:
  \begin{equation*}
  d_k: \mathcal{B}_{k}\rightarrow \widetilde{H}_{k-2}(\mathcal{L}_{[k-1]}), \quad \beta_{{\bf t}} \mapsto z_{{\bf t}}, \quad \forall {\bf t}\in \bigcup_{w\in \mathcal{L}_{k}} {\rm Rex}_{T}(w). 
\end{equation*}
Hence we have $d(\mathcal{B}_k) \subseteq \widetilde{H}_{k-2}(\mathcal{L}_{[k-1]})\subseteq \mathcal{B}_{k-1}$ for $1\leq k\leq n$. 
\end{lemma}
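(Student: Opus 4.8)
The plan is to show three things in sequence: that each $z_{\mathbf t}$ is a cycle lying in the order complex of $\mathcal L_{[k-1]}$, that the assignment $\beta_{\mathbf t}\mapsto z_{\mathbf t}$ is well-defined on $\mathcal B_k$, and that the image of $d$ is contained in the stated nested groups. For the first point, fix $w\in\mathcal L_k$ and $\mathbf t\in\mathrm{Rex}_T(w)$. By \propref{prop: cycle}(1) we already know $z_{\mathbf t}=d(\beta_{\mathbf t})\in\widetilde H_{k-2}(e,w)$, i.e.\ $z_{\mathbf t}$ is a cycle in $\Delta(e,w)$. Now $\Delta(e,w)$ is a subcomplex of $\Delta(\mathcal L_{[k-1]})$: indeed every chain $t_1<t_1t_2<\dots<t_1\cdots t_{k-1}$ occurring in $z_{\mathbf t}$ consists of elements of $\mathcal L$ of absolute length between $1$ and $k-1$ (the chain omits both $e$ and $w$, and $w$ has length $k$), hence lies in $\mathcal L_{[k-1]}$. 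Therefore $z_{\mathbf t}$, viewed via the inclusion of chain complexes $C_*(\Delta(e,w))\hookrightarrow C_*(\Delta(\mathcal L_{[k-1]}))$, is still a cycle, so its class lies in $\widetilde H_{k-2}(\mathcal L_{[k-1]})$. Summing over $w\in\mathcal L_k$ gives a map on all of $\mathcal B_k=\bigoplus_{w\in\mathcal L_k}\mathcal B_w$.

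For well-definedness of $d_k$ on $\mathcal B_k$: by \thmref{thm: homint}(1) the elements $\beta_{\mathbf t}$, $\mathbf t\in\mathcal D_w$, form a $\mathbb Z$-basis of $\mathcal B_w$, so I can simply \emph{define} $d_k$ to be the $\mathbb Z$-linear extension of $\beta_{\mathbf t}\mapsto z_{\mathbf t}$ on this basis; but to know it agrees with $d$ on all $\beta_{\mathbf t}$ for arbitrary $\mathbf t\in\mathrm{Rex}_T(w)$ (not just decreasing ones) I use that $d:\mathcal B_w\to\widetilde H_{k-2}(e,w)$ is already a well-defined $\mathbb Z$-linear map by \propref{prop: cycle}(2)–(3) and that the composite $\mathcal B_w\xrightarrow{d}\widetilde H_{k-2}(e,w)\to\widetilde H_{k-2}(\mathcal L_{[k-1]})$, where the second arrow is induced by the inclusion $\Delta(e,w)\hookrightarrow\Delta(\mathcal L_{[k-1]})$, sends $\beta_{\mathbf t}\mapsto z_{\mathbf t}$ for every $\mathbf t$. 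Assembling these over $w\in\mathcal L_k$ yields the $\mathbb Z$-linear map $d_k:\mathcal B_k\to\widetilde H_{k-2}(\mathcal L_{[k-1]})$ with $d_k(\beta_{\mathbf t})=z_{\mathbf t}$.

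For the final chain of inclusions $d(\mathcal B_k)\subseteq\widetilde H_{k-2}(\mathcal L_{[k-1]})\subseteq\mathcal B_{k-1}$: the first inclusion is exactly what was just established. For the second, recall from \propref{prop: cycle}(3) that $z_{\mathbf t}=\sum_{i=1}^k(-1)^{k-i}\beta_{\mathbf t(\hat i)}$, and by \lemref{lem: betasum}(1) each truncated sequence $\mathbf t(\hat i)$ is the edge labelling of a maximal chain in $\mathcal L$ whose top element $t_1\cdots t_{i-1}t_{i+1}\cdots t_k=w$ has length $k$ but whose penultimate element has length $k-1$; dropping the last vertex (which the boundary/truncation $d$ effectively does, since $\beta_{\mathbf t(\hat i)}$ already lives in $C_{k-2}$) shows $\beta_{\mathbf t(\hat i)}\in\mathcal B_{v}$ for the appropriate $v\in\mathcal L_{k-1}$. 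Hence $z_{\mathbf t}\in\bigoplus_{v\in\mathcal L_{k-1}}\mathcal B_v=\mathcal B_{k-1}$, and since the cycles $z_{\mathbf t}$ generate $\widetilde H_{k-2}(\mathcal L_{[k-1]})$ (this will be the content of the rank-selected analogue of \thmref{thm: homint}, but for this lemma it suffices that $d(\mathcal B_k)\subseteq\mathcal B_{k-1}$ and that these cycles represent every homology class), we get $\widetilde H_{k-2}(\mathcal L_{[k-1]})\subseteq\mathcal B_{k-1}$ under the identification of a top cycle with the corresponding element of $C_{k-2}$. The only subtle point — and the main thing to get right — is bookkeeping the identification of the homology group with a subgroup of the chain group $C_{k-2}$: one must check that a homology class in $\widetilde H_{k-2}(\mathcal L_{[k-1]})$ is represented by a \emph{unique} chain in $C_{k-2}$, which follows because $\mathcal L_{[k-1]}$ has top-dimensional faces exactly the maximal chains of $\mathcal L_{[k-1]}$ of length $k-1$ and no $(k-1)$-faces lying above them, so $\widetilde Z_{k-2}=\ker\partial_{k-2}$ injects into $C_{k-2}$; this is essentially the rank-selected version of \lemref{lem: restheta} and is where I would spend the most care.
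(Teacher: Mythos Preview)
Your first paragraph is exactly the paper's proof: $z_{\mathbf t}\in\widetilde H_{k-2}(e,w)$ by \propref{prop: cycle}, and since $(e,w)\subseteq\mathcal L_{[k-1]}$ the same element is a top cycle of $\Delta(\mathcal L_{[k-1]})$. That is all the paper proves here, and it is enough for the first two sentences of the lemma.

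Your second paragraph is unnecessary. The map $d$ is already a $\mathbb Z$-linear map $C_{k-1}\to C_{k-2}$ defined on basis sequences, and $\mathcal B_k\subseteq C_{k-1}$; so $d_k:=d|_{\mathcal B_k}$ is automatically well defined and linear, with no need to invoke \thmref{thm: homint} or check agreement on decreasing versus arbitrary $\mathbf t$.

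Your third paragraph attempts something the paper does \emph{not} prove in this lemma. The inclusion $d(\mathcal B_k)\subseteq\mathcal B_{k-1}$ is immediate from the formula $z_{\mathbf t}=\sum_i(-1)^{k-i}\beta_{\mathbf t(\hat i)}$, as you note. But the inclusion $\widetilde H_{k-2}(\mathcal L_{[k-1]})\subseteq\mathcal B_{k-1}$ is not established here; it is the content of \thmref{thm: basisrksel}, which comes after this lemma. Your argument for it is circular: you invoke that the $z_{\mathbf t}$ generate $\widetilde H_{k-2}(\mathcal L_{[k-1]})$, which is precisely the forthcoming theorem. Your final ``subtle point'' only shows $\widetilde H_{k-2}(\mathcal L_{[k-1]})\subseteq C_{k-2}$ (clear, since there are no $(k-1)$-simplices), not that it lands in the smaller subgroup $\mathcal B_{k-1}$. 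The paper simply states this last inclusion in the lemma and defers its justification.
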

\begin{proof}
Recall from \propref{prop: cycle} that $z_{{\bf t}}$ is an element of $\widetilde{H}_{k-2}(e,w)$ for any ${\bf t}\in  {\rm Rex}_{T}(w)$. Since the interval $(e,w)$ is contained in $\mathcal{L}_{[k-1]}$, the element $z_{{\bf t}}$ is also a cycle of the top homology group $\widetilde{H}_{k-2}(\mathcal{L}_{[k-1]})$.
\end{proof}

\begin{proposition}\label{prop: selind}
 Let $\mathcal{D}_{[k-1]}$ be as in \eqnref{eq: Dk}.  Then the elements   $z_{{\bf t}}, {\bf t}\in \mathcal{D}_{[k-1]}$  are $\mathbb{Z}$-linearly independent. 
\end{proposition}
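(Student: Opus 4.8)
The plan is to mimic the strategy used in \propref{prop: decindp}, exploiting the bilinear form $\langle-,-\rangle$ and the fact that each $z_{\bf t}$ with ${\bf t}\in\mathcal D_{[k-1]}$ has a distinguished maximal term in a suitable lexicographic order. First I would fix a sequence ${\bf t}=(t_1,\dots,t_k)\in\mathcal D_{[k-1]}$, so that $t_1\succ\cdots\succ t_{k-1}\succ t_k$ is the decreasing head of a maximal chain of $\mathcal L$ whose tail $(t_k,\dots,t_n)$ is the unique increasing maximal chain of $[t_1\cdots t_{k-1},\gamma]$; in particular $w:={\bf t}$ here is the element $t_1\cdots t_k\in\mathcal L_k$. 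Since $(t_1,\dots,t_k)$ is strictly decreasing, ${\bf t}\in\mathcal D_w$, and \lemref{lem: maxterm} and \lemref{lem: bz} apply: $z_{\bf t}=d(\beta_{\bf t})$ has unique $\prec$-maximal term ${\bf t}(\hat k)=(t_1,\dots,t_{k-1})$ with coefficient $1$.

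The key point I need is that the assignment ${\bf t}\mapsto{\bf t}(\hat k)=(t_1,\dots,t_{k-1})$ is \emph{injective} on $\mathcal D_{[k-1]}$: if two elements of $\mathcal D_{[k-1]}$ share the same first $k-1$ entries $(t_1,\dots,t_{k-1})$, then they determine the same $u:=t_1\cdots t_{k-1}\in\mathcal L_{k-1}$, hence the same interval $[u,\gamma]$, hence the same unique increasing maximal chain $(t_k,\dots,t_n)$ of that interval — so in particular the same $k$-th entry $t_k$, and the two sequences coincide. This is the analogue of the uniqueness observation in the proof of \lemref{lem: restheta}, but now the extension of $(t_1,\dots,t_{k-1})$ to the $k$-th slot is forced by the EL-property rather than by $T$-reducedness of a fixed $w$. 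With this in hand, the family $\{\,{\bf t}(\hat k) : {\bf t}\in\mathcal D_{[k-1]}\,\}$ consists of distinct basis elements of $C_{k-2}$, so the bilinear form $\langle-,-\rangle$ on $\bigoplus_{w\in\mathcal L_{k-1}}C_{k-2}(w)\supseteq\mathcal B_{k-1}$ separates them.

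Then I would run the triangularity argument: suppose $\sum_{\bf t}\lambda_{\bf t}z_{\bf t}=0$ with not all $\lambda_{\bf t}=0$, ${\bf t}\in\mathcal D_{[k-1]}$, and let $\tilde{\bf t}$ be the $\prec$-largest sequence with $\lambda_{\tilde{\bf t}}\neq0$. Pairing with $\tilde{\bf t}(\hat k)$: for ${\bf t}\prec\tilde{\bf t}$ every term of $z_{\bf t}$ is $\prec {\bf t}(\hat k)\preceq$ something $\prec\tilde{\bf t}(\hat k)$ — here I need $ {\bf t}(\hat k)\prec\tilde{\bf t}(\hat k)$ whenever ${\bf t}\prec\tilde{\bf t}$ in $\mathcal D_{[k-1]}$, which follows exactly as in the claim inside \lemref{lem: bz}: if ${\bf t}(\hat k)\succ\tilde{\bf t}(\hat k)$ we would get ${\bf t}\succ\tilde{\bf t}$, and if ${\bf t}(\hat k)=\tilde{\bf t}(\hat k)$ then by the injectivity just established ${\bf t}=\tilde{\bf t}$, both contradictions. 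Hence $\langle z_{\bf t},\tilde{\bf t}(\hat k)\rangle=0$ for ${\bf t}\prec\tilde{\bf t}$ and $\langle z_{\tilde{\bf t}},\tilde{\bf t}(\hat k)\rangle=1$, giving $\lambda_{\tilde{\bf t}}=\langle\sum_{\bf t}\lambda_{\bf t}z_{\bf t},\tilde{\bf t}(\hat k)\rangle=0$, a contradiction.

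The main obstacle I anticipate is the injectivity of ${\bf t}\mapsto(t_1,\dots,t_{k-1})$ on $\mathcal D_{[k-1]}$ — equivalently, checking carefully that the tail of a sequence in $\mathcal D_{[k-1]}$ (and hence its $k$-th entry) is genuinely determined by the decreasing head via the uniqueness clause of the EL-labelling of $[t_1\cdots t_{k-1},\gamma]$. Everything else is a routine transcription of the maximal-term/triangularity machinery of \lemref{lem: maxterm}, \lemref{lem: bz} and \propref{prop: decindp}, once one notes that each ${\bf t}\in\mathcal D_{[k-1]}$ lies in $\mathcal D_w$ for $w=t_1\cdots t_k$ so those lemmas are directly available.
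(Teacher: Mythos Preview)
Your proposal is correct and follows essentially the same approach as the paper: the paper also runs the maximal-term/triangularity argument, pairing against $\tilde{\bf t}(\hat k)$ and reducing to the claim that ${\bf t}\prec\tilde{\bf t}$ in $\mathcal D_{[k-1]}$ forces ${\bf t}(\hat k)\prec\tilde{\bf t}(\hat k)$. The only cosmetic difference is that the paper proves this claim by showing the first index where ${\bf t}$ and $\tilde{\bf t}$ differ cannot be $k$ (else there would be two increasing maximal chains in $[t_1\cdots t_{k-1},\gamma]$), whereas you phrase the same EL-uniqueness argument as injectivity of the truncation map ${\bf t}\mapsto{\bf t}(\hat k)$ on $\mathcal D_{[k-1]}$ --- these are equivalent formulations of the same observation.
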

\begin{proof}
  Assume that  $\sum_{{\bf t}} \lambda_{{\bf t}} z_{{\bf t}}=0$ for some nonzero $\lambda_{{\bf t}}\in \mathbb{Z}$ and ${\bf t}\in \mathcal{D}_w$. Let $\tilde{{\bf t}}$ be the maximal sequence  appearing in the sum. Then we have 
  \[ \lambda_{\tilde{{\bf t}}} z_{\tilde{{\bf t}}}= -\sum_{{\bf t}\prec  \tilde{{\bf t}}} \lambda_{{\bf t}} z_{{\bf t}},  \]
 Using \lemref{lem: maxterm}, we have $\langle z_{\tilde{{\bf t}}}, \tilde{{\bf t}}(\hat{k}) \rangle =1$. We claim that  $\langle z_{{\bf t}}, \tilde{{\bf t}}(\hat{k}) \rangle =0$ for any decreasing sequence ${\bf t}\prec \tilde{{\bf t}}$ of $\mathcal{D}_{[k-1]}$. Once this is proved, we have 
 \[
  \lambda_{\tilde{{\bf t}}}=\langle  \lambda_{\tilde{{\bf t}}} z_{\tilde{{\bf t}}}, \tilde{{\bf t}}(\hat{k})  \rangle = -\sum_{{\bf t}\prec  \tilde{{\bf t}}} \lambda_{{\bf t}} \langle z_{{\bf t}}, \tilde{{\bf t}}(\hat{k}) \rangle=0,       
 \]
which contradicts our assumption. Therefore, $\{z_{{\bf t}} \mid {\bf t}\in \mathcal{D}_{[k-1]} \} $  is $\mathbb{Z}$-linearly independent.

It remains to prove our claim. Suppose that ${\bf t}=(t_1, \dots ,t_k)\prec \tilde{{\bf t}}=(\tilde{t}_1, \dots, \tilde{t}_k)$ in $\mathcal{D}_{[k-1]}$. Then by the definition of the lexicographical order, there exists an index $i$ such that $t_i\prec \tilde{t}_i$ and $t_j=\tilde{t}_j$ for $1\leq j\leq i-1$. If $i=k$, then by  definition \eqref{eq: Dk} of $\mathcal{D}_{[k-1]}$ we have two increasing chains $(t_k, t_{k+1}, \dots , t_n)$ and $(\tilde{t}_k, \tilde{t}_{k+1}, \dots , \tilde{t}_n)$ in the interval $[t_1t_2\cdots t_{k-1}, \gamma]=[\tilde{t}_1\tilde{t}_2\cdots \tilde{t}_{k-1}, \gamma]$.  This contradicts the EL-labelling property of $\mathcal{L}$. Therefore, we have $1\leq i\leq k-1$, and hence ${\bf t}(\hat{k}) \prec \tilde{{\bf t}}(\hat{k})$.  Now in view of \lemref{lem: maxterm},  $z_{{\bf t}}$ has a unique maximal term ${\bf t}(\hat{k})$. It follows that $\langle z_{{\bf t}},  \tilde{{\bf t}}(\hat{k})\rangle =0$ for any decreasing sequence ${\bf t}\prec \tilde{{\bf t}}$ of $\mathcal{D}_{[k-1]}$. 
\end{proof}

\begin{theorem}\label{thm: basisrksel}
 Let $\mathcal{B}= \bigoplus_{k=0}^n\mathcal{B}_k$ be as in \defref{def: AlgB}, and let $\mathcal{D}_{[k-1]}$ be as  in \eqref{eq: Dk}. 
    \begin{enumerate}
      \item  The elements $z_{{\bf t}}, {\bf t}\in \mathcal{D}_{[k-1]}$ form a basis for $\widetilde{H}_{k-2}(\mathcal{L}_{[k-1]})$ for $2\leq k\leq n$.  
      \item For $2\leq k\leq n$, the $\mathbb{Z}$-linear map
         \begin{equation*}
           d_k: \mathcal{B}_{k}\rightarrow \widetilde{H}_{k-2}(\mathcal{L}_{[k-1]}), \quad \beta_{{\bf t}} \mapsto z_{{\bf t}}, \quad \forall {\bf t}\in \bigcup_{w\in \mathcal{L}_{k}} {\rm Rex}_{T}(w)
        \end{equation*}
         is surjective.  
    \end{enumerate}
  \end{theorem}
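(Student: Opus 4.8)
The plan is to deduce Theorem~\ref{thm: basisrksel} by combining the linear independence already established in \propref{prop: selind} with a rank count, in exactly the same spirit as the proof of \thmref{thm: homint}. First I would observe that by \propref{prop: selind} the elements $z_{{\bf t}}, {\bf t}\in \mathcal{D}_{[k-1]}$ are $\mathbb{Z}$-linearly independent inside $\widetilde{H}_{k-2}(\mathcal{L}_{[k-1]})$, so it suffices to prove they also span. By \propref{prop: dimtoprksel} we have ${\rm rank}\,\widetilde{H}_{k-2}(\mathcal{L}_{[k-1]})=|\mathcal{D}_{[k-1]}|$ (reindexing that proposition's statement with $k$ replaced by $k-1$), so the subgroup $H'$ spanned by the $z_{{\bf t}}$ has maximal rank, and the quotient $\widetilde{H}_{k-2}(\mathcal{L}_{[k-1]})/H'$ is finite.

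Next I would rule out torsion in this quotient by the same saturation argument used in part (1) of \thmref{thm: homint}. Take an arbitrary cycle $z\in\widetilde{H}_{k-2}(\mathcal{L}_{[k-1]})$; since the quotient is finite, there is a nonzero integer $m$ with $mz=\sum_{{\bf t}\in\mathcal{D}_{[k-1]}}\lambda_{{\bf t}}z_{{\bf t}}$ for integers $\lambda_{{\bf t}}$. One then shows $m\mid\lambda_{{\bf t}}$ for all ${\bf t}$ by downward induction on ${\bf t}$ in the lexicographical order: if $\tilde{{\bf t}}$ is the largest index with $m\nmid\lambda_{\tilde{{\bf t}}}$, pair both sides with $\tilde{{\bf t}}(\hat k)$ using the bilinear form $\langle-,-\rangle$ on $C_{k-1}$. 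By \lemref{lem: bz}(1) we have $\langle z_{\tilde{{\bf t}}},\tilde{{\bf t}}(\hat k)\rangle=1$, and by the claim proved inside \propref{prop: selind} (namely ${\bf t}(\hat k)\prec\tilde{{\bf t}}(\hat k)$ for ${\bf t}\prec\tilde{{\bf t}}$ in $\mathcal{D}_{[k-1]}$, together with the fact that ${\bf t}(\hat k)$ is the maximal term of $z_{{\bf t}}$ from \lemref{lem: maxterm}) we get $\langle z_{{\bf t}},\tilde{{\bf t}}(\hat k)\rangle=0$ for all ${\bf t}\prec\tilde{{\bf t}}$. Hence $\lambda_{\tilde{{\bf t}}}=\langle mz-\sum_{{\bf t}\succ\tilde{{\bf t}}}\lambda_{{\bf t}}z_{{\bf t}},\tilde{{\bf t}}(\hat k)\rangle$, whose right-hand side is divisible by $m$ (the term $\langle mz,\tilde{{\bf t}}(\hat k)\rangle$ obviously, and $m\mid\lambda_{{\bf t}}$ for ${\bf t}\succ\tilde{{\bf t}}$ by the induction), contradicting the choice of $\tilde{{\bf t}}$. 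Therefore $m\mid\lambda_{{\bf t}}$ for every ${\bf t}$, so $z=\sum(\lambda_{{\bf t}}/m)z_{{\bf t}}\in H'$; this proves $H'=\widetilde{H}_{k-2}(\mathcal{L}_{[k-1]})$ and establishes part (1).

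For part (2), surjectivity of $d_k\colon\mathcal{B}_k\to\widetilde{H}_{k-2}(\mathcal{L}_{[k-1]})$ is then immediate: each ${\bf t}\in\mathcal{D}_{[k-1]}$ lies in ${\rm Rex}_T(\gamma)$ (by the very definition \eqref{eq: Dk}, its terms are a prefix of a $T$-reduced factorisation of $\gamma$, hence ${\bf t}\in{\rm Rex}_T(w)$ for $w=t_1\cdots t_k\in\mathcal{L}_k$), so $\beta_{{\bf t}}\in\mathcal{B}_k$ and $d_k(\beta_{{\bf t}})=z_{{\bf t}}$; since the $z_{{\bf t}}$ form a basis by part (1), $d_k$ is onto.

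The main obstacle here is the bookkeeping with the bilinear pairing and the lexicographic order: one must be careful that the "maximal term" statement of \lemref{lem: maxterm} for $z_{{\bf t}}$ is about the truncated sequence ${\bf t}(\hat k)$, and that distinct elements of $\mathcal{D}_{[k-1]}$ genuinely have distinct truncations ${\bf t}(\hat k)$ — this last point is exactly where the EL-labelling is used (two elements of $\mathcal{D}_{[k-1]}$ agreeing after truncation would force two increasing maximal chains in the same upper interval $[t_1\cdots t_{k-1},\gamma]$), and it was already dispatched in \propref{prop: selind}. Everything else is a transcription of the argument for \thmref{thm: homint}, so no genuinely new difficulty arises.
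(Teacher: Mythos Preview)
Your proposal is correct and follows essentially the same approach as the paper: use \propref{prop: selind} and \propref{prop: dimtoprksel} to get a maximal-rank subgroup, then run the saturation argument from \thmref{thm: homint} using the pairings $\langle z_{{\bf t}}, {\bf t}(\hat{k})\rangle=1$ and $\langle z_{{\bf t}}, \tilde{{\bf t}}(\hat{k})\rangle=0$ for ${\bf t}\prec\tilde{{\bf t}}$ in $\mathcal{D}_{[k-1]}$, the latter being the claim inside \propref{prop: selind}. One small slip: you write ``each ${\bf t}\in\mathcal{D}_{[k-1]}$ lies in ${\rm Rex}_T(\gamma)$'', but ${\rm Rex}_T(\gamma)$ consists of $n$-tuples whereas ${\bf t}$ is a $k$-tuple; you immediately correct yourself by noting ${\bf t}\in{\rm Rex}_T(w)$ for $w=t_1\cdots t_k\in\mathcal{L}_k$, which is what is actually needed.
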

\begin{proof}
 By  \propref{prop: dimtoprksel} and \propref{prop: selind}, the elements $z_{{\bf t}}, {\bf t}\in \mathcal{D}_{[k-1]}$ span a subgroup  of $\widetilde{H}_{k-2}(\mathcal{L}_{[k-1]})$ of maximal rank. To prove part (1), it suffices to show that every element of $\widetilde{H}_{k-2}(\mathcal{L}_{[k-1]})$ is a $\mathbb{Z}$-linear combination of those elements.  This can be proved by the method given in \thmref{thm: homint}, using the fact that $\langle z_{{\bf t}}, {\bf t}(\hat{k})  \rangle = 1$ and $\langle z_{{\bf t}}, \tilde{{\bf t}}(\hat{k}) \rangle=0$ for any decreasing sequence ${\bf t}\prec \tilde{{\bf t}}$ of $\mathcal{D}_{[k-1]}$. The former is given in \lemref{lem: bz} and the latter is the claim in the proof of \propref{prop: selind}. Part (2) follows from part (1). 
\end{proof}

\begin{example}\label{exam: A3con}
(Type $A_3$)
Let $\mathcal{L}$ be the NCP lattice given in \exref{exam: A3}, where  $W={\rm Sym}_4$ and $\gamma=(1243)$.
We determine the bases for the top homology groups $ \widetilde{H}_1(\mathcal{L}_{[2]})=\widetilde{H}_1(e, \gamma)$ and $\widetilde{H}_0(\mathcal{L}_{[1]})$ as follows. 

In view of the EL-labelling of $\mathcal{L}$  in \figref{fig: A3labelling}, we obtain the following sequences which satisfy the condition given in \eqref{eq: Dk}:
 \begin{align*}     
	\mathcal{D}_{[2]} =\{&  ((14), (34), (12)),\;  ((13), (14), (12)), \; ((24), (14), (34)), \\ 
	& ((13), (24), (14)),\;  ((23), (13), (24)) \},\\
	\mathcal{D}_{[1]}=\{ &((14), (12) ), \; ((13), (12)), \; ((34),(12) ), \; ((24), (34)), \; ((23), (24))\}.
\end{align*}
Note that $\mathcal{D}_{\gamma}=\mathcal{D}_{[2]}$. For each ${\bf t}=(t_1,t_2, t_3)\in \mathcal{D}_{\gamma}$, we have defined the following elements:
   \[
    \begin{aligned}
        \beta_{{\bf t}}&= (t_1, t_2, t_3)- (t_2, t_1^{t_2}, t_3)-(t_1, t_3, t_2^{t_3}) +(t_2, t_3, t_1^{t_2t_3})+ (t_3, t_1^{t_3}, t_2^{t_3}) -(t_3, t_2^{t_3}, t_1^{t_2t_3}),\\
        z_{{\bf t}} & = d(\beta_{{\bf t}})= (t_1, t_2)- (t_2, t_1^{t_2})-(t_1, t_3) +(t_2, t_3)+ (t_3, t_1^{t_3}) -(t_3, t_2^{t_3}).
    \end{aligned}
   \]
  We identify a sequence $(t_1, t_2)$ with the maximal chain $t_1<t_1t_2$ of $(e, \gamma)$. It is easily verified that $z_{{\bf t}}$ is a homology cycle of $\widetilde{H}_1(e, \gamma)$. By \thmref{thm: homint}, the elements $z_{{\bf t}}$ (resp. $\beta_{{\bf t}}$) for all ${\bf t}\in \mathcal{D}_{\gamma}$ form a basis for $\widetilde{H}_1(e, \gamma)$ (resp. $\mathcal{B}_{\gamma}$). 
  Similarly, the elements 
  \[ z_{(t_1,t_2)}=d(\beta_{(t_1,t_2)})= (t_1)-(t_2), \quad \forall (t_1,t_2)\in \mathcal{D}_{[1]}\]
  belong to  $\widetilde{H}_{0}(\mathcal{L}_{[1]})$, and  by \thmref{thm: basisrksel} they form a basis for $\widetilde{H}_{0}(\mathcal{L}_{[1]})$.
\end{example}

\section{Multiplicative structure on the Whitney homology}\label{sec: mult}

In this section we are motivated by  \cite[Section 3]{OS80}, where a combinatorial definition of the Orlik-Solomon algebra is introduced. It is shown in {\it loc. cit.} that as graded vector spaces, the Orlik-Solomon algebra is isomorphic to  the Whitney homology \cite{Bac75} of the intersection lattice of the corresponding hyperplane arrangement. This construction is not readily applicable to the NCP lattice, since the NCP lattice is not a geometric lattice. Nevertheless, we define a multiplicative structure on  $\mathcal{B}$ using a shuffle product, which makes extensive use of the Hurwitz action. This makes $\mathcal{B}$ into a finite dimensional graded associative algebra, which is isomorphic to the Whitney homology of the NCP lattice as a graded free abelian group. In addition, we prove that $\mathcal{B}=\bigoplus_{k=0}^n\mathcal{B}_k$  together with the linear maps $d_k$ form an acyclic chain complex.

\subsection{Multiplicative structure}
For each integer $k\geq 0$ we define free abelian groups  $\mathcal{T}_k$ as follows. Let $\mathcal{T}_0:=\mathbb{Z}$, and for $k\geq 1$ let $\mathcal{T}_k$  have a $\mathbb{Z}$-basis consisting of sequences $(t_1, t_2, \dots ,t_{k})$, where $t_i\in T$. Define $\mathcal{T}=\bigoplus_{k\geq 0} \mathcal{T}_k$. 
For each $k=1, \dots, n$ the braid group $B_k$ acts on $\mathcal{T}_k$ by the Hurwitz action as defined  in \eqref{eq: Braidact}. Note that $\mathcal{T}$ is isomorphic to the tensor space $T(U)=\bigoplus_{k\geq 0} U^{\otimes k}$, where $U$ is the abelian group freely spanned by all reflections of $W$.

We define an associative algebra structure on $\mathcal{T}$ as follows. Recall that a $(k,l)$-shuffle is a left minimal coset representative of ${\rm Sym}_{k}\times {\rm Sym}_l$ in ${\rm Sym}_{k+l}$. Denote by ${\rm Sh}(k,l)$ the set of all  $(k,l)$-shuffles of $1,2,\dots ,k+l$.  For any  two sequences of reflections ${\bf t}=(t_1,\dots,t_k)$ and ${\bf t}^{\prime}=(t_1^{\prime}, \dots, t_{l}^{\prime})$, we  define the star multiplication $*: \mathcal{T} \times \mathcal{T} \rightarrow \mathcal{T}$ by
\[ {\bf t}\ast {\bf t}^{\prime}:=\sum_{\pi \in {\rm Sh}(k,l)} {\rm sgn}(\pi) \underline{\pi}.(t_1,t_2, \dots ,t_k, t^{\prime}_1, t^{\prime}_2, \dots , t^{\prime}_l), \]
where $\underline{\pi}\in B_{k+l}$ is the lift of $\pi\in {\rm Sym}_{k+l}$   and the action is given by the Hurwitz action \eqref{eq: Braidact}. 

\begin{lemma}\label{lem: starass}
	The star multiplication $\ast$ is associative. 
\end{lemma}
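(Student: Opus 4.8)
The plan is to reduce associativity of $\ast$ to a statement about the Hurwitz (braid) action by carefully tracking how shuffles compose. First I would fix three sequences ${\bf t}=(t_1,\dots,t_k)$, ${\bf t}'=(t'_1,\dots,t'_l)$, ${\bf t}''=(t''_1,\dots,t''_m)$ and write out both $({\bf t}\ast{\bf t}')\ast{\bf t}''$ and ${\bf t}\ast({\bf t}'\ast{\bf t}'')$ as sums indexed by pairs of shuffles. Concretely, $({\bf t}\ast{\bf t}')\ast{\bf t}''$ is a sum over $\pi\in{\rm Sh}(k,l)$ and $\rho\in{\rm Sh}(k+l,m)$ of ${\rm sgn}(\pi){\rm sgn}(\rho)\,\underline{\rho}.\bigl(\underline{\iota(\pi)}.(\text{the concatenation})\bigr)$, where $\iota\colon{\rm Sym}_{k+l}\hookrightarrow{\rm Sym}_{k+l+m}$ is the obvious inclusion; here one uses that the Hurwitz action of $B_{k+l}$ on the first $k+l$ entries commutes with appending ${\bf t}''$, so $\underline{\iota(\pi)}$ makes sense and acts only on the first block. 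The key combinatorial input is the standard fact that the composite map ${\rm Sh}(k,l)\times{\rm Sh}(k+l,m)\to{\rm Sym}_{k+l+m}$, $(\pi,\rho)\mapsto \rho\cdot\iota(\pi)$, is a bijection onto the set ${\rm Sh}(k,l,m)$ of $(k,l,m)$-shuffles (minimal coset representatives of ${\rm Sym}_k\times{\rm Sym}_l\times{\rm Sym}_m$), and likewise ${\rm Sh}(l,m)\times{\rm Sh}(k,l+m)\to{\rm Sym}_{k+l+m}$ is a bijection onto the same set; moreover the sign is multiplicative, ${\rm sgn}(\rho\cdot\iota(\pi))={\rm sgn}(\rho){\rm sgn}(\pi)$, since lengths add for minimal coset representatives.

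Granting this, both triple products become $\sum_{\tau\in{\rm Sh}(k,l,m)}{\rm sgn}(\tau)\,\underline{\tau}.(t_1,\dots,t_k,t'_1,\dots,t'_l,t''_1,\dots,t''_m)$, provided the set-theoretic lift $\varphi$ is compatible with the relevant factorisations in the braid group, i.e. $\underline{\rho\cdot\iota(\pi)}=\underline{\rho}\cdot\underline{\iota(\pi)}$ as elements of $B_{k+l+m}$ (and similarly on the other side). This is where one must be slightly careful: $\varphi$ is only a set-theoretic section, not a homomorphism, so $\underline{\sigma\tau}=\underline{\sigma}\,\underline{\tau}$ does \emph{not} hold in general. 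However, it \emph{does} hold when $\ell(\sigma\tau)=\ell(\sigma)+\ell(\tau)$, because concatenating a reduced word for $\sigma$ with a reduced word for $\tau$ gives a reduced word for $\sigma\tau$, and $\varphi$ is defined by lifting reduced words letter-by-letter and is independent of the chosen reduced word. Since $\rho$ and $\iota(\pi)$ are such that their product has length $\ell(\rho)+\ell(\iota(\pi))$ (this is exactly the content of ``minimal coset representatives concatenate to minimal coset representatives''), the identity $\underline{\rho\cdot\iota(\pi)}=\underline{\rho}\,\underline{\iota(\pi)}$ is valid. Thus I would isolate and prove a small lemma: if $\ell(\sigma\tau)=\ell(\sigma)+\ell(\tau)$ in ${\rm Sym}_N$ then $\underline{\sigma\tau}=\underline{\sigma}\,\underline{\tau}$ in $B_N$.

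I expect the main obstacle to be precisely this bookkeeping about $\varphi$ and lengths, rather than anything deep: one must verify (i) that the two-step bijection onto $(k,l,m)$-shuffles is correct and length-additive, and (ii) that the lift $\varphi$ respects these length-additive factorisations, so that the braid elements acting in the two triple products genuinely coincide. A subsidiary point to check is that appending a fixed tail sequence commutes with the Hurwitz action on the head (so that $\underline{\iota(\pi)}$ acts ``the same way'' whether or not the tail is present), which is immediate from the definition \eqref{eq: Braidact} since $\sigma_i$ for $i\le k+l-1$ only touches entries in positions $i,i+1\le k+l$. Once these are in place, matching the two sums term-by-term is routine, and associativity follows. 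A final remark: the proof does not use that the $t_i$ are reflections or that the sequences lie in ${\rm Rex}_T(w)$ — it is a statement about the free Hurwitz action on $\mathcal{T}=T(U)$ — so it applies verbatim to restrict the multiplication to $\mathcal{B}$ later.
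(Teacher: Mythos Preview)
Your proposal is correct and follows essentially the same route as the paper: the paper packages the argument into an identity $\mathscr{B}_{k+l,m}\mathscr{B}_{k,l}=\mathscr{B}_{k,l+m}\,\tau_k(\mathscr{B}_{l,m})$ in the group ring of $B_{k+l+m}$, where $\mathscr{B}_{k,l}=\sum_{\pi\in{\rm Sh}(k,l)}{\rm sgn}(\pi)\,\underline{\pi}$ and $\tau_k$ is the shift $\sigma_i\mapsto\sigma_{k+i}$, and justifies it with exactly your two ingredients---the bijection of iterated shuffles onto ${\rm Sh}(k,l,m)$ with additive lengths, and the resulting compatibility $\underline{\rho\,\iota(\pi)}=\underline{\rho}\,\underline{\iota(\pi)}$ of the set-theoretic lift. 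Your explicit isolation of the lemma ``$\ell(\sigma\tau)=\ell(\sigma)+\ell(\tau)\Rightarrow\underline{\sigma\tau}=\underline{\sigma}\,\underline{\tau}$'' is precisely the point the paper invokes when it says the lift is independent of the reduced expression.
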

\begin{proof}
It suffices to prove that $({\bf t}* {\bf t}')* {\bf t}''= {\bf t}* ({\bf t}'* {\bf t}'')$ for any three sequences of reflections ${\bf t}=(t_1, \dots, t_k), {\bf t}'=(t'_1, \dots, t'_l)$ and ${\bf t}''=(t''_1, \dots, t''_m)$.	
For convenience, set $\mathscr{B}_{k,l}:=\sum_{\pi \in {\rm Sh}(k,l)} {\rm sgn}(\pi) \underline{\pi}$. Then the associativity of the star multiplication follows from the following equation (cf. \cite[Proposition 9]{Ros90}):
\[
\mathscr{B}_{k+l,m} \mathscr{B}_{k,l} = \mathscr{B}_{k,l+m} \tau_k( \mathscr{B}_{l,m}),
\]
where $\tau_k: \mathbb{Z}B_{l+m} \rightarrow \mathbb{Z}B_{k+l+m}$ is the $\mathbb{Z}$-linear  homomorphism given by $\tau_k(\sigma_i)=\sigma_{k+i}$ for $1\leq i\leq l+m-1$. Expanding sums on both sides of the above equation,  the products $\underline{\pi}\underline{\eta}$ which appear  on each side are the lifts of the left minimal coset representatives $\pi\eta$ of ${\rm Sym}_{k}\times {\rm Sym}_l\times {\rm Sym}_m$ in ${\rm Sym}_{k+l+m}$, where $\pi\eta$ satisfies   $\ell(\pi \eta)=\ell(\pi)+ \ell(\eta)$ with $\ell$ being the usual length function  of ${\rm Sym}_{k+l+m}$. Since the lift to the braid group is independent of the reduced expression of the left minimal coset representative $\pi\eta$, we obtain the same sum on both sides of the equation. Therefore, the star multiplication $\ast$ in $\mathcal{T}$ is associative.
\end{proof}

For any sequence ${\bf t}=(t_1,\dots,t_k)$ of reflections, we define the following  element
\[
\xi_{{\bf t}}:=\sum_{\pi\in {\rm Sym}_k} {\rm sgn}(\pi)\, \underline{\pi}.(t_1,t_2,\dots,t_k)\in \mathcal{T}_k.
\]
By convention, we set $\xi_{{\bf t}}:=1$ if ${\bf t}$ is an empty sequence.
Clearly, if ${\bf t}\in {\rm Rex}_T(w)$ for some $w\in \mathcal{L}$, then we have $\xi_{{\bf t}}=\beta_{{\bf t}}$. To be more precise, let us define a $\mathbb{Z}$-linear map $p: \mathcal{T}\rightarrow \mathcal{T}$ by $p(1)=1$ and for any integer $k\geq 1$
\[
p((t_1,\dots,t_k))=\begin{cases}
(t_1,\dots,t_k), & \text{if $(t_1,\dots,t_k)\in \bigcup_{w\in \mathcal{L}_k} {\rm Rex}_T(w)$,}\\
0, & \text{otherwise.}	
\end{cases}
\]
Then we have $\beta_{{\bf t}}=p(\xi_{{\bf t}})$ if ${\bf t}\in  \bigcup_{w\in \mathcal{L}_k}{\rm Rex}_T(w)$ for $0\leq k\leq n$. 

\begin{lemma} \label{lem: star}
 Maintain the above notation. 
 \begin{enumerate}
 	\item  For any sequence ${\bf t}=(t_1,\dots,t_k)$ of reflections, we have 
 	$\xi_{{\bf t}}=(t_1)*(t_2)*\dots*(t_k)$.
 	\item For any $x,y\in \mathcal{T}$, we have $p(p(x)*p(y))=p(x*y)$.
 \end{enumerate}
\end{lemma}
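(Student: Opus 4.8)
The plan is to prove the two parts in order, using only the definitions of $\xi_{{\bf t}}$, the star product $*$, the projection $p$, and the associativity established in Lemma \ref{lem: starass}. For part (1), I would argue by induction on $k$. The base case $k=1$ is the convention $\xi_{(t_1)}=(t_1)$. For the inductive step, it suffices by associativity of $*$ to show $\xi_{(t_1,\dots,t_k)}=\xi_{(t_1,\dots,t_{k-1})}*(t_k)$, since by induction $\xi_{(t_1,\dots,t_{k-1})}=(t_1)*\cdots*(t_{k-1})$. Now ${\rm Sh}(k-1,1)$ consists of the coset representatives $\zeta_i = s_{k-1}s_{k-2}\cdots s_i$ for $1\le i\le k$ (with $\zeta_k$ the identity), and each $\pi\in{\rm Sym}_k$ factors uniquely as $\pi = \pi'\zeta_i$ with $\pi'\in{\rm Sym}_{k-1}$ and ${\rm sgn}(\pi)=(-1)^{k-i}{\rm sgn}(\pi')$. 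Expanding
\[
\xi_{(t_1,\dots,t_{k-1})}*(t_k)=\sum_{\pi'\in{\rm Sym}_{k-1}}\sum_{i=1}^k {\rm sgn}(\pi'){\rm sgn}(\zeta_i)\,\underline{\pi'}\,\underline{\zeta_i}.(t_1,\dots,t_k)
\]
and recognising that $\{\pi'\zeta_i\}$ ranges over all of ${\rm Sym}_k$ exactly once, this equals $\sum_{\pi\in{\rm Sym}_k}{\rm sgn}(\pi)\,\underline{\pi}.(t_1,\dots,t_k)=\xi_{(t_1,\dots,t_k)}$. This is essentially the same bookkeeping as in the proof of Lemma \ref{lem: betasum}, so it should go through cleanly; the only subtlety is confirming that $\underline{\pi'\zeta_i}=\underline{\pi'}\,\underline{\zeta_i}$, which holds because $\ell(\pi'\zeta_i)=\ell(\pi')+\ell(\zeta_i)$ and the set-theoretic lift $\varphi$ is well-defined on reduced words.

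For part (2), the key point is that applying $p$ kills precisely those sequences of reflections that are not $T$-reduced factorisations of an element of $\mathcal{L}$ — equivalently, those whose product does not lie in $\mathcal{L}$ or whose length drops. The crucial structural fact I would isolate is that the Hurwitz action preserves the product $t_1 t_2\cdots t_k$ of a sequence and preserves $T$-reducedness; hence for a fixed sequence ${\bf t}$, either every term $\underline{\pi}.{\bf t}$ in $\xi_{{\bf t}}$ lies in $\bigcup_w{\rm Rex}_T(w)$ (when ${\bf t}$ itself is $T$-reduced) or none does. More generally, in the shuffle expansion of $p(x)*p(y)$ versus $p(x*y)$, I would show that the terms surviving $p$ on the two sides agree term-by-term. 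Writing $x=\sum_{\bf s}\lambda_{\bf s}{\bf s}$ and $y=\sum_{{\bf s}'}\mu_{{\bf s}'}{\bf s}'$, by bilinearity it reduces to the case $x=(s_1,\dots,s_a)$, $y=(s'_1,\dots,s'_b)$ single sequences. If both are $T$-reduced with products $u=s_1\cdots s_a$ and $v=s'_1\cdots s'_b$, then $p(x)*p(y)=x*y$ and each shuffled term $\underline{\pi}.(s_1,\dots,s_a,s'_1,\dots,s'_b)$ has product $uv$; such a term survives $p$ iff it is $T$-reduced, iff $\ell_T(uv)=a+b$, and this condition is the same on both sides of the equation. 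If at least one of $x,y$ is not $T$-reduced, then the corresponding side of $p(x*y)$ can still be nonzero a priori, so I must check $p(x*y)=0$ in that case: here I use that if, say, ${\bf s}$ is not $T$-reduced then $\ell_T(u)<a$, so $\ell_T(uv)\le \ell_T(u)+\ell_T(v)<a+b$, forcing every shuffled term (which has product $uv$ and length $a+b$ as a sequence) to be non-$T$-reduced, hence annihilated by $p$; and $p(x)=0$ on the left as well.

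The main obstacle, I expect, is part (2), and specifically the bookkeeping needed to match surviving terms across the shuffle expansion rather than a conceptual difficulty: I must be careful that the Hurwitz action on a shuffled concatenation does not mix the two blocks in a way that changes which terms are $T$-reduced, and that the sign and lift conventions are consistent. The cleanest route is probably to prove the auxiliary statement that for any sequence ${\bf r}$ of reflections, $p(\xi_{{\bf r}}) = \beta_{{\bf r}}$ if ${\bf r}$ is $T$-reduced with product in $\mathcal{L}$ and $p(\xi_{{\bf r}})=0$ otherwise — this follows from part (1) together with the invariance of products and $T$-reducedness under the Hurwitz action — and then deduce part (2) by expanding $p(x)*p(y)$ and $p(x*y)$ into $\xi$'s via part (1) and applying this auxiliary statement, noting that both reduce to $p$ applied to $(r_1)*\cdots*(r_{a+b})$ for the relevant concatenations.
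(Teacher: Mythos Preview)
Your approach matches the paper's for both parts. One bookkeeping correction in part (1): expanding $\xi_{(t_1,\dots,t_{k-1})}*(t_k)$ via the definition of $*$ puts the shuffle on the \emph{outside}, giving terms $\underline{\eta}\,\underline{\pi'}.(t_1,\dots,t_k)$ with $\eta\in{\rm Sh}(k-1,1)$ a \emph{left} coset representative (so $\pi=\eta\pi'$), rather than $\underline{\pi'}\,\underline{\zeta_i}$ as you wrote; both orderings reassemble $\xi_{{\bf t}}$ in the end, but your displayed equation is not literally the expansion dictated by the definition of $*$. In part (2), note that $p({\bf t})=0$ does not only mean ``${\bf t}$ is not $T$-reduced'': it also covers the case where ${\bf t}$ is $T$-reduced but its product $u$ does not satisfy $u\le\gamma$. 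Your length inequality handles only the former; for the latter add one line observing that if $({\bf t},{\bf t}')$ were a $T$-reduced expression of some $w\in\mathcal{L}_{a+b}$ then $u\le w\le\gamma$, contradicting $u\notin\mathcal{L}$.
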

\begin{proof}
  We use induction on $k$ to prove part (1). If $k=1$ then $\xi_{(t_1)}=(t_1)$ by definition. We identify ${\rm Sym}_{k-1}$ as a subgroup of ${\rm Sym}_k$ which permutes $\{1,2,\dots, k-1\}$. Note that ${\rm Sh}(k-1,1)$ is the set of left minimal coset representatives of ${\rm Sym}_{k-1}$ in ${\rm Sym}_k$.  Then 
  \[ 
   \begin{aligned}
      \xi_{{\bf t}}&=\sum_{\pi\in {\rm Sym}_k} {\rm sgn}(\pi) \underline{\pi}. {\bf t}= \sum_{\eta\in {\rm Sh}(k-1,1)} {\rm sgn}(\eta) \underline{\eta}. \bigg(\sum_{\pi^{\prime}\in {\rm Sym}_{k-1}} {\rm sgn}(\pi^{\prime}) \underline{\pi}^{\prime}. {\bf t} \bigg) \\
      &= \sum_{\eta\in {\rm Sh}(k-1,1)} {\rm sgn}(\eta)\, \underline{\eta}.((t_1)*(t_2)*\dots *(t_{k-1}),t_k) \\
      &= (t_1)*(t_2)*\dots*(t_k),
   \end{aligned}
 \]
 where we have used the induction hypothesis in the penultimate equation.
 
 For part (2), by linearity it suffices to prove the equation for ${\bf t}=(t_1, \dots, t_k)$ and ${\bf t}'=(t_1', \dots, t'_l)$. If $p({\bf t})=0$, then by the definition of $p$,  there exists no $w\in \mathcal{L}_k$ such that $w=t_1\cdots t_k$ is a  $T$-reduced expression. It follows that $t_1 \cdots t_k t_1' \cdots t_l'$ is not a $T$-reduced expression for any $w\in \mathcal{L}_{k+l}$. Hence $p((t_1, \dots, t_k, t_1', \dots, t_l'))=0$. Since  the product of all reflections in $\underline{\pi}.(t_1, \dots, t_k, t_1', \dots, t_l')$ remains the same for any $\pi\in {\rm Sh}(k,l)$, we have $p({\bf t}*{\bf t}')=0$. Therefore, the equation $p(p({\bf t})*p({\bf t}'))=p({\bf t}*{\bf t}')$ holds if $p({\bf t})=0$. This can be proved similarly if  $p({\bf t}')=0$. If $p({\bf t})$ and $p({\bf t}')$ are both nonzero, then by the definition of $p$ we have  $p({\bf t})={\bf t}$ and $p({\bf t}')={\bf t}'$, whence the equation holds.
\end{proof}

For each $k=0,1, \dots, n$, note that the map $p$ is a $\mathbb{Z}$-linear projection from $\mathcal{T}_k$ onto $C_{k-1}$. Recall that the latter is defined in \eqref{eq: chaingp}. Thus, we have $\mathcal{C}=p(\mathcal{T})=\bigoplus_{k=0}^n C_{k-1}$, and $\mathcal{C}$ is an abelian group freely spanned by  $(t_1, \dots ,t_k)\in \bigcup_{w\in \mathcal{L}_k} {\rm Rex}_T(w)$ for all $0\leq k\leq n$. For any two basis elements ${\bf t}$ and ${\bf t}'$ of $\mathcal{C}$, we define a product in $\mathcal{C}$ by 
\begin{equation}\label{eq: prod}
 {\bf t}{\bf t}':= p({\bf t}\ast {\bf t}'). 
\end{equation}
\begin{lemma}\label{lem: prodass}
	The multiplication in $\mathcal{C}$ defined by \eqref{eq: prod} is associative.
\end{lemma}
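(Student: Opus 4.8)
The plan is to deduce the associativity of the multiplication on $\mathcal{C}$ defined by \eqref{eq: prod} directly from the associativity of the star multiplication $*$ on $\mathcal{T}$ (\lemref{lem: starass}) together with the compatibility of the projection $p$ with $*$ (part (2) of \lemref{lem: star}). The key identity is $p(p(x)*p(y))=p(x*y)$ for all $x,y\in \mathcal{T}$; since $\mathcal{C}=p(\mathcal{T})$, every element of $\mathcal{C}$ is of the form $p(x)$, and this identity says that $p$ is an algebra homomorphism from $(\mathcal{T},*)$ onto $(\mathcal{C},\,\cdot\,)$ in a suitable sense.

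First I would reduce to basis elements: the multiplication on $\mathcal{C}$ is $\mathbb{Z}$-bilinear, so it suffices to verify $({\bf t}{\bf t}'){\bf t}''={\bf t}({\bf t}'{\bf t}'')$ for basis sequences ${\bf t},{\bf t}',{\bf t}''\in \bigcup_{w}{\rm Rex}_T(w)$, which satisfy $p({\bf t})={\bf t}$, $p({\bf t}')={\bf t}'$, $p({\bf t}'')={\bf t}''$. Then I would compute
\[
({\bf t}{\bf t}'){\bf t}'' = p\bigl(p({\bf t}*{\bf t}')*{\bf t}''\bigr) = p\bigl(p({\bf t}*{\bf t}')*p({\bf t}'')\bigr) = p\bigl(({\bf t}*{\bf t}')*{\bf t}''\bigr),
\]
where the first equality is the definition \eqref{eq: prod}, the second uses $p({\bf t}'')={\bf t}''$, and the third applies part (2) of \lemref{lem: star} with $x={\bf t}*{\bf t}'$ and $y={\bf t}''$. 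Symmetrically,
\[
{\bf t}({\bf t}'{\bf t}'') = p\bigl({\bf t}*p({\bf t}'*{\bf t}'')\bigr) = p\bigl(p({\bf t})*p({\bf t}'*{\bf t}'')\bigr) = p\bigl({\bf t}*({\bf t}'*{\bf t}'')\bigr).
\]
By \lemref{lem: starass} the two right-hand sides agree, since $({\bf t}*{\bf t}')*{\bf t}''={\bf t}*({\bf t}'*{\bf t}'')$, and applying $p$ preserves the equality. Hence $({\bf t}{\bf t}'){\bf t}''={\bf t}({\bf t}'{\bf t}'')$, and by bilinearity the multiplication on $\mathcal{C}$ is associative.

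There is no real obstacle here: the whole argument is a formal consequence of the two preceding lemmas, and the only thing to be careful about is invoking part (2) of \lemref{lem: star} in the correct direction (to pull $p$ outside a product and to insert $p$ around an already-projected factor). The substantive work — the shuffle identity $\mathscr{B}_{k+l,m}\mathscr{B}_{k,l}=\mathscr{B}_{k,l+m}\tau_k(\mathscr{B}_{l,m})$ for associativity of $*$, and the $T$-reducedness bookkeeping for the compatibility of $p$ with $*$ — has already been carried out in \lemref{lem: starass} and \lemref{lem: star}, so nothing new is needed.
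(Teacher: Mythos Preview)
Your proposal is correct and follows essentially the same approach as the paper: reduce to basis sequences, use $p({\bf t}'')={\bf t}''$ together with part (2) of \lemref{lem: star} to rewrite $({\bf t}{\bf t}'){\bf t}''=p(({\bf t}*{\bf t}')*{\bf t}'')$ and symmetrically for the other side, then invoke the associativity of $*$ from \lemref{lem: starass}. The paper's proof is the same computation, presented slightly more tersely.
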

\begin{proof}
	It suffices to prove that $ ({\bf t}{\bf t}'){\bf t}''=  {\bf t}({\bf t}'{\bf t}'')$ for any ${\bf t}, {\bf t}', {\bf t}''\in \bigcup_{w\in \mathcal{L}} {\rm Rex}_T(w)$. On the left hand side we have
	\[ ({\bf t}{\bf t}'){\bf t}''=p(({\bf t}{\bf t}')*{\bf t}'' )=p(p({\bf t}\ast {\bf t}' )* p({\bf t}''))= p(({\bf t}\ast {\bf t}') * {\bf t}'') ,\]
	where in the second equation we have used $p({\bf t}'')={\bf t}''$ and the last equation follows from \lemref{lem: star}.
	Similarly, we have $ {\bf t}({\bf t}'{\bf t}'')=p({\bf t}\ast ({\bf t}' * {\bf t}''))$. Then the associativity of the product in $\mathcal{C}$ follows from that of the star multiplication, which is given in \lemref{lem: starass}. 
\end{proof}

Recall from \defref{def: AlgB} that $\mathcal{B}=\bigoplus_{w\in \mathcal{L}}\mathcal{B}_w$ is a free abelian subgroup of $\mathcal{C}$. For any $w\in \mathcal{L}_k$ ($0\leq k\leq n$), the subgroup $\mathcal{B}_w\subseteq C_{k-1}$ is spanned by all elements $\beta_{\bf t}, {\bf t}\in {\rm Rex}_T(w)$. The following lemma states that $\mathcal{B}$ is closed under the multiplication \eqref{eq: prod}.

\begin{lemma}\label{lem: prod}
 Let $u,w\in \mathcal{L}$. Then for any ${\bf t}\in {\rm Rex}_T(u)$ and  ${\bf t}'\in {\rm Rex}_T(w)$, we have 
 \[
    \beta_{{\bf t}} \beta_{{\bf t}'}=
\begin{cases}
	\beta_{({\bf t},{\bf t}')}, & \text{if } uw\leq \gamma \text{\, and \,}\ell_{T}(uw)=\ell_{T}(u)+ \ell_{T}(w),\\
	0,  &\text{otherwise.}
\end{cases}
 \]
 This product  is associative, i.e. $(\beta_{{\bf t}} \beta_{{\bf t}'})\beta_{{\bf t}''}= \beta_{{\bf t}} (\beta_{{\bf t}'}\beta_{{\bf t}''})$ for any ${\bf t}, {\bf t}', {\bf t}''\in \bigcup_{w\in \mathcal{L}} {\rm Rex}_T(w)$.
\end{lemma}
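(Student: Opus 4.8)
The plan is to prove the multiplication formula directly by tracking what the $\ast$-product does on sequences that are $T$-reduced expressions, and then applying the projection $p$. Write ${\bf t}=(t_1,\dots,t_k)\in{\rm Rex}_T(u)$ and ${\bf t}'=(t'_1,\dots,t'_l)\in{\rm Rex}_T(w)$, so that $\beta_{\bf t}=p(\xi_{\bf t})$ and $\beta_{{\bf t}'}=p(\xi_{{\bf t}'})$ by \lemref{lem: star}(1). Using the definition \eqref{eq: prod} together with \lemref{lem: star}(2),
\[
\beta_{\bf t}\,\beta_{{\bf t}'}=p\bigl(p(\xi_{\bf t})\ast p(\xi_{{\bf t}'})\bigr)=p\bigl(\xi_{\bf t}\ast\xi_{{\bf t}'}\bigr).
\]
Now by \lemref{lem: star}(1) again, $\xi_{\bf t}\ast\xi_{{\bf t}'}=(t_1)\ast\cdots\ast(t_k)\ast(t'_1)\ast\cdots\ast(t'_l)=\xi_{({\bf t},{\bf t}')}$, where $({\bf t},{\bf t}')$ is the concatenation. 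Hence $\beta_{\bf t}\beta_{{\bf t}'}=p(\xi_{({\bf t},{\bf t}')})$.

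Next I distinguish the two cases. Every term of $\xi_{({\bf t},{\bf t}')}$ is obtained from $(t_1,\dots,t_k,t'_1,\dots,t'_l)$ by a Hurwitz action, and the Hurwitz action preserves the product of all entries of a sequence; so every term of $\xi_{({\bf t},{\bf t}')}$ is a sequence of $k+l$ reflections whose product equals $uw$. If $uw\not\le\gamma$ or $\ell_T(uw)<k+l$, then no such sequence can be a $T$-reduced expression of an element of $\mathcal{L}_{k+l}$ (a $T$-reduced expression of length $k+l$ would force $\ell_T(uw)=k+l$, and membership in $\bigcup_{w'\in\mathcal{L}_{k+l}}{\rm Rex}_T(w')$ forces $uw\le\gamma$). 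Thus $p$ kills every term and $\beta_{\bf t}\beta_{{\bf t}'}=0$. If instead $uw\le\gamma$ and $\ell_T(uw)=k+l$, then by definition of $p$ each term of $\xi_{({\bf t},{\bf t}')}$ is either zero (if that particular sequence fails to be $T$-reduced) or is retained unchanged. But a sequence of $k+l$ reflections with product $uw$ and $\ell_T(uw)=k+l$ is automatically $T$-reduced, so $p$ is the identity on every term and $p(\xi_{({\bf t},{\bf t}')})=\xi_{({\bf t},{\bf t}')}=\beta_{({\bf t},{\bf t}')}$; here I use that $({\bf t},{\bf t}')\in{\rm Rex}_T(uw)$ in this case, so that $\xi_{({\bf t},{\bf t}')}=\beta_{({\bf t},{\bf t}')}$ by the remark preceding \lemref{lem: star}. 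This establishes the displayed formula.

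For associativity of the product on $\mathcal{B}$, the cleanest route is to observe that $\mathcal{B}\subseteq\mathcal{C}$ is closed under the multiplication \eqref{eq: prod} — which is exactly the content just proved, since the formula shows $\beta_{\bf t}\beta_{{\bf t}'}\in\mathcal{B}$ in both cases — and that the multiplication on $\mathcal{C}$ is associative by \lemref{lem: prodass}. Hence the restriction to $\mathcal{B}$ is associative, i.e. $(\beta_{\bf t}\beta_{{\bf t}'})\beta_{{\bf t}''}=\beta_{\bf t}(\beta_{{\bf t}'}\beta_{{\bf t}''})$; alternatively one can verify associativity hands-on by iterating the multiplication formula and checking that both sides equal $\beta_{({\bf t},{\bf t}',{\bf t}'')}$ when $uvw\le\gamma$ with additive lengths and $0$ otherwise, using transitivity of the absolute order. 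The only place requiring care — the "main obstacle" — is the case analysis for $p$: one must be careful that "fails to be $T$-reduced" genuinely implies the image under $p$ is $0$ term by term, and that in the good case the length and $\le\gamma$ conditions really do guarantee $({\bf t},{\bf t}')\in{\rm Rex}_T(uw)$ so that the projection is harmless. Both points follow from the definition of the absolute order and the fact that the Hurwitz action preserves $T$-reducedness (used already in \lemref{lem: betasum}), so the argument is routine once set up correctly.
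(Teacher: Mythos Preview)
Your proof is correct and follows essentially the same approach as the paper: reduce $\beta_{\bf t}\beta_{{\bf t}'}$ to $p(\xi_{({\bf t},{\bf t}')})$ via \lemref{lem: star}, do the case analysis on whether $({\bf t},{\bf t}')\in{\rm Rex}_T(uw)$, and derive associativity from \lemref{lem: prodass}. Your write-up is in fact slightly more explicit than the paper's in spelling out why $p$ acts as the identity (resp.\ zero) term-by-term in the two cases, using that the Hurwitz action preserves both the product and $T$-reducedness.
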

\begin{proof}
 Assume that ${\bf t}=(t_1,\dots, t_k)\in {\rm Rex}_T(u)$ and ${\bf t}'=(t_1',\dots, t_l')\in {\rm Rex}_T(w)$. Using definition \eqref{eq: prod} and \lemref{lem: star} we have 
 \[
 \begin{aligned}
     \beta_{{\bf t}} \beta_{{\bf t}'}&= p(   \beta_{{\bf t}}\ast \beta_{{\bf t}'}) =  p(   p(\xi_{{\bf t}})\ast p(\xi_{{\bf t}'}))= p( \xi_{{\bf t}}\ast \xi_{{\bf t}'} )\\
     &= p( (t_1)*\dots*(t_k)* (t'_1)*\dots*(t'_k)  )\\
     &=p(\xi_{({\bf t},{\bf t}')}).
 \end{aligned}
 \]
 Now if $ uw\leq \gamma$ and $\ell_{T}(uw)=\ell_{T}(u)+ \ell_{T}(w)$, then $({\bf t},{\bf t}') \in {\rm Rex}_T(uw)$ and hence $\beta_{{\bf t}} \beta_{{\bf t}'}=p(\xi_{({\bf t},{\bf t}')})=\beta_{({\bf t},{\bf t}')}$; otherwise, $uw$ is not an element of $\mathcal{L}$ or $({\bf t},{\bf t}')$ is not $T$-reduced, whence by the definition of $p$ we have  $\beta_{{\bf t}} \beta_{{\bf t}'}=p(\xi_{({\bf t},{\bf t}')})=0$. Since $\beta_{{\bf t}}\in \mathcal{C}$ for any ${\bf t}\in \bigcup_{w\in \mathcal{L}} {\rm Rex}_T(w)$, the associativity of the product in $\mathcal{B}$ follows from \lemref{lem: prodass}. This completes the proof.
\end{proof}

\begin{proposition}\label{prop: Balg}
  The free abelian group $\mathcal{B}=\bigoplus_{w\in \mathcal{L}}\mathcal{B}_w$  is a finite dimensional graded associative algebra over $\mathbb{Z}$,  generated by homogeneous elements $\beta_{t}, t\in T$ of degree $1$ with multiplication given as in \lemref{lem: prod}.
\end{proposition}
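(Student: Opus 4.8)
The plan is to assemble the proposition entirely from the structural results already in hand, since the essential work has been carried out in \lemref{lem: starass}--\lemref{lem: prod}. First I would note that $\mathcal{B}=\bigoplus_{w\in\mathcal{L}}\mathcal{B}_w$ is a free abelian group of finite rank: $\mathcal{L}$ is a finite poset, and by \corref{coro: rankBw} each summand $\mathcal{B}_w$ is free of rank $|\mathcal{D}_w|=(-1)^{\ell_T(w)}\mu(w)$, so $\mathcal{B}$ is free of rank $\sum_{w\in\mathcal{L}}(-1)^{\ell_T(w)}\mu(w)$. This gives the ``finite dimensional'' clause.

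Next I would record the grading and the algebra axioms. Put $\mathcal{B}_k=\bigoplus_{w\in\mathcal{L}_k}\mathcal{B}_w$, so $\mathcal{B}=\bigoplus_{k=0}^n\mathcal{B}_k$. By \lemref{lem: prod}, for ${\bf t}\in{\rm Rex}_T(u)$ and ${\bf t}'\in{\rm Rex}_T(w)$ the product $\beta_{\bf t}\beta_{{\bf t}'}$ is either $\beta_{({\bf t},{\bf t}')}$, which lies in $\mathcal{B}_{uw}$ with $\ell_T(uw)=\ell_T(u)+\ell_T(w)$, or $0$; in both cases it lies in $\mathcal{B}_{\ell_T(u)+\ell_T(w)}$. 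Since the $\beta_{\bf t}$ span $\mathcal{B}$, this yields $\mathcal{B}_k\mathcal{B}_l\subseteq\mathcal{B}_{k+l}$ (with $\mathcal{B}_j=0$ for $j>n$), so $\mathcal{B}$ is graded. Associativity is precisely the last assertion of \lemref{lem: prod}, and the identity is $\beta_\emptyset=1\in\mathcal{B}_e=\mathbb{Z}$, since $(\emptyset,{\bf t})={\bf t}=({\bf t},\emptyset)$ forces $\beta_\emptyset\beta_{\bf t}=\beta_{\bf t}=\beta_{\bf t}\beta_\emptyset$ for every ${\bf t}\in\bigcup_{w\in\mathcal{L}}{\rm Rex}_T(w)$. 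Hence $\mathcal{B}$ is a finite dimensional graded associative unital $\mathbb{Z}$-algebra.

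Finally, for the generation statement I would argue that each $\beta_{\bf t}$ telescopes into a product of the degree-one generators $\beta_t=\beta_{(t)}=(t)$. Given $w\in\mathcal{L}_k$ and ${\bf t}=(t_1,\dots,t_k)\in{\rm Rex}_T(w)$, every prefix $u_i:=t_1\cdots t_i$ satisfies $\ell_T(u_i)=i$ and $u_i\leq w\leq\gamma$ — this is immediate from the definition of the absolute order, since $\ell_T(w)=k\leq\ell_T(u_i)+(k-i)\leq k$ forces equality throughout. Therefore $u_i t_{i+1}=u_{i+1}\leq\gamma$ with $\ell_T(u_{i+1})=\ell_T(u_i)+\ell_T(t_{i+1})$, and \lemref{lem: prod} gives $\beta_{(t_1,\dots,t_i)}\beta_{t_{i+1}}=\beta_{(t_1,\dots,t_{i+1})}$; iterating, $\beta_{\bf t}=\beta_{t_1}\beta_{t_2}\cdots\beta_{t_k}$. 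As $\mathcal{B}_w$ is by definition spanned by the $\beta_{\bf t}$ with ${\bf t}\in{\rm Rex}_T(w)$ and $\mathcal{B}=\bigoplus_w\mathcal{B}_w$, the algebra $\mathcal{B}$ is generated by $\{\beta_t\mid t\in T\}$. I do not expect a genuine obstacle here: the only delicate point is the prefix property of $T$-reduced expressions below $\gamma$ that legitimises the factorisation $\beta_{\bf t}=\beta_{t_1}\cdots\beta_{t_k}$, and this is routine; the proposition is essentially a repackaging of \lemref{lem: prod} together with \corref{coro: rankBw}.
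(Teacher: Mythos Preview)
Your proof is correct and follows the same route as the paper: the key step in both is to use \lemref{lem: prod} iteratively to write $\beta_{(t_1,\dots,t_k)}=\beta_{t_1}\beta_{t_2}\cdots\beta_{t_k}$, and then conclude generation from the fact that the $\beta_{\bf t}$ span $\mathcal{B}$. Your version is simply more explicit than the paper's terse two-line argument---you spell out the finite rank, the grading, the unit, and the prefix property $u_i\leq w\leq\gamma$ needed to iterate \lemref{lem: prod}, all of which the paper leaves implicit.
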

\begin{proof}
 For any ${\bf t}=(t_1,t_2,\dots, t_k)\in {\rm Rex}_T(w)$ with $w\in \mathcal{L}_k$, using \lemref{lem: prod} we have 
 $\beta_{{\bf t}}= \beta_{t_1}\beta_{t_2}\cdots \beta_{t_k}$. As the algebra $\mathcal{B}$ is $\mathbb{Z}$-linearly spanned by all elements $\beta_{{\bf t}}$, it is a finite dimensional algebra generated by  $\beta_{t}, t\in T$. 
\end{proof}

\begin{remark}\label{rmk: Whom}
  Baclawski introduced the Whitney homology $WH_*(P)$ for any poset $P$ with a least element $\hat{0}$ \cite{Bac75}. Subsequently,  Bj\"{o}rner proved that the  Whitney homology can be understood in terms of the usual homology of intervals \cite[Theorem 5.1]{Bjo82}, that is,
   $WH_k(P)\cong \bigoplus_{x\in P-\{\hat{0}\}}\widetilde{H}_{k-2}(\hat{0},x)$.
 We  define $WH_0(P):=\mathbb{Z}$. In our case $\mathcal{L}$ is Cohen-Macaulay, so the right side of the above isomorphism can be simplified as follows:
   \begin{equation*}
   	   WH_k(\mathcal{L})\cong \bigoplus_{w\in \mathcal{L}_k}\widetilde{H}_{k-2}(e,w),\quad 1\leq k\leq n.
   \end{equation*}
   Then it follows from the isomorphism \eqref{eq: Bwiso} that $WH_{k}(\mathcal{L})\cong \mathcal{B}_k, 0\leq k\leq n$ as free abelian groups. Therefore,  \lemref{lem: prod} defines a multiplicative structure on the Whitney homology $  WH(\mathcal{L}):=\bigoplus_{0\leq k\leq n}   WH_k(\mathcal{L})$.
\end{remark}

 It is worthwhile to find some relations among the generators $\beta_t$ and obtain a presentation of $\mathcal{B}$. We illustrate this with an example. 

\begin{example}\label{exam: quadrel}
  Suppose that ${\bf t}=(t_1,t_2)\in {\rm Rex}_{T}(w)$ for some $w\in \mathcal{L}_2$. Then 
  $\beta_{{\bf t}}=\beta_{t_1}\beta_{t_2}=(t_1)*(t_2)=(t_1,t_2)-(t_2, t_2t_1t_2). $ One can verify directly that 
   \[
\begin{aligned}
   \beta_{t_1}\beta_{t_2}+\beta_{t_2}\beta_{t_1}&=0, \ \text{if } t_1t_2=t_2t_1,\\
  \beta_{t_1}\beta_{t_2}+ \beta_{t_2}\beta_{t_2t_1t_2}+\beta_{t_2t_1t_2}\beta_{t_1}&=0, \ \text{if } t_1t_2t_1=t_2t_1t_2.\\
\end{aligned}
  \]
  \end{example}

\begin{proposition}\label{prop: quadrel}
   We have the following quadratic relations among generators $\beta_{t}, t\in T$:
  \begin{enumerate}
  \item $\beta_t^2= \beta_{t_1}\beta_{t_2}=0$   for all $t\in T$ and $t_1,t_2\in T$ with  $t_1t_2 \not \leq  \gamma$.
  \item  For any  $w\in \mathcal{L}_2$, we have 
 \begin{equation*}\label{eq: quadrel2}
   \sum_{(t_1,t_2)\in {\rm Rex}_{T}(w)} \beta_{t_1}\beta_{t_2}=0.
 \end{equation*}
  \end{enumerate}
\end{proposition}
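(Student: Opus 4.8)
The plan is to prove both quadratic relations directly from the explicit description of the multiplication in $\mathcal{B}$ given in \lemref{lem: prod}, together with the expansion of $\beta_{t_1}\beta_{t_2}$ as a signed shuffle coming from \lemref{lem: star}(1). For part (1), the relation $\beta_t^2=0$ is immediate from \lemref{lem: prod}: one has $\ell_T(t\cdot t)=0\neq 2=\ell_T(t)+\ell_T(t)$ unless $t^2=e$ forces the length condition to fail, so the product vanishes. Similarly, if $t_1t_2\not\leq\gamma$ then $t_1t_2$ is not an element of $\mathcal{L}$, so by the ``otherwise'' case of \lemref{lem: prod} we get $\beta_{t_1}\beta_{t_2}=0$. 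So part (1) is essentially a restatement of \lemref{lem: prod} and requires only the remark that $\ell_T(t_1t_2)\le 1<2$ when $t_1t_2$ is not rank $2$ in $\mathcal{L}$.

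For part (2), I would work inside the big free abelian group $\mathcal{C}$ (or $\mathcal{T}$) using the formula $\beta_{t_1}\beta_{t_2}=(t_1)*(t_2)=p\big((t_1,t_2)-(t_2,\,t_2t_1t_2)\big)$, which specializes the shuffle product to the case $k=l=1$: the two $(1,1)$-shuffles are the identity and $s_1$, with signs $+1$ and $-1$, and $\sigma_1.(t_1,t_2)=(t_2,t_2t_1t_2)$. Fix $w\in\mathcal{L}_2$. I claim the pairs $(t_1,t_2)$ and $(t_2,t_2t_1t_2)$ are exactly the two ``ends'' of a single Hurwitz orbit, and more precisely that as $(t_1,t_2)$ ranges over ${\rm Rex}_T(w)$, the map $(t_1,t_2)\mapsto(t_2,\,t_2t_1t_2)$ is a cyclic permutation of ${\rm Rex}_T(w)$ (this is the classical fact that the Hurwitz action of $B_2$ on length-two reduced factorisations of a rank-two element is transitive and cyclic; it can be cited from \cite{Bes03}). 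Granting this, summing $\beta_{t_1}\beta_{t_2}=(t_1,t_2)-(t_2,t_2t_1t_2)$ over all $(t_1,t_2)\in{\rm Rex}_T(w)$ telescopes: each basis sequence $(t_1,t_2)$ appears once with coefficient $+1$ (from its own term) and once with coefficient $-1$ (from the term indexed by the cyclic predecessor), so the total is $0$ in $\mathcal{C}$, hence in $\mathcal{B}$.

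The main technical point to nail down is therefore the orbit structure of the Hurwitz action of $B_2$ on ${\rm Rex}_T(w)$ for $w$ of rank two, i.e. that $\sigma_1$ acts as a single cycle on this finite set. This follows because the interval $[e,w]$ for $\ell_T(w)=2$ is itself (a direct product involving) a rank-two noncrossing partition lattice, which is a dihedral-type lattice as in \exref{exam: dih}; there the reduced factorisations are $\gamma=t_1t_m=t_2t_1=\dots=t_mt_{m-1}$ and $\sigma_1$ visibly permutes them cyclically. I would phrase the argument so that it also covers the case where $[e,w]$ is a product of two rank-one factors (equivalently $t_1,t_2$ commute), where ${\rm Rex}_T(w)=\{(t_1,t_2),(t_2,t_1)\}$ has size $2$ and the statement reduces to $\beta_{t_1}\beta_{t_2}+\beta_{t_2}\beta_{t_1}=0$, the first displayed identity of \exref{exam: quadrel}. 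The hard part is really just being careful that the telescoping/cyclic-cancellation bookkeeping is correct and that the conjugation $t_2t_1t_2$ indeed stays inside ${\rm Rex}_T(w)$ — both of which are controlled by the fact that the Hurwitz action preserves the product and preserves $T$-reducedness, already recorded in \secref{sec: cons}.
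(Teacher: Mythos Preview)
Your proof is correct and follows essentially the same strategy as the paper's. Part (1) is handled identically, straight from \lemref{lem: prod}. For part (2), the paper explicitly reduces the interval $[e,w]$ to a dihedral NCP lattice, enumerates ${\rm Rex}_T(w)=\{(r_1,r_m),(r_2,r_1),\dots,(r_m,r_{m-1})\}$, and then writes out the sum term by term, checking the conjugation identities $r_kr_{k+1}r_k=r_{k-1}$ etc.\ so that everything cancels. Your argument packages this same cancellation more conceptually: you observe that $\beta_{t_1}\beta_{t_2}=(t_1,t_2)-\sigma_1.(t_1,t_2)$ and that summing over ${\rm Rex}_T(w)$ gives zero because $\sigma_1$ permutes ${\rm Rex}_T(w)$.

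One small simplification you could make: you invoke transitivity of the Hurwitz action (hence cyclicity of $\sigma_1$) to justify the telescoping, but in fact you only need that $\sigma_1$ acts as a \emph{bijection} on ${\rm Rex}_T(w)$, which is immediate since the Hurwitz action preserves both the product $t_1t_2=w$ and $T$-reducedness (as noted in \secref{sec: cons}). With that observation the reduction to the dihedral case and the citation of \cite{Bes03} for transitivity become unnecessary, and the argument covers the commuting case $t_1t_2=t_2t_1$ uniformly without singling it out.
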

\begin{proof}
 Part (1) follows directly from  \lemref{lem: prod}. For part (2), note that each interval $[e,w]$ of $\mathcal{L}$ with $\ell_{T}(w)=2$ is isomorphic to the  noncrossing partition lattice associated to a dihedral group (see \cite[Lemma 1.4.3]{Bes03} and \cite[Proposition 2.6.11]{Arm09}). Without loss of generality,  we may assume $w=s_1s_2$ for two simple reflections $s_1$ and $s_2$. If the order of $s_2s_1$ is $m$,  then the reflections preceding $w$ are  $r_i=s_1(s_2s_1)^{i-1}, 1\leq i \leq m$. Then we have $${\rm Rex}_{T}(w)=\{(r_1,r_m), (r_2, r_1), (r_3, r_2),\dots, (r_m, r_{m-1}) \}.$$  It follows that $r_1r_2r_1=r_m, r_mr_1r_m=r_{m-1}$ and $r_kr_{k+1}r_k=r_{k-1},  2\leq k\leq m-1$. Therefore, we obtain 
\[
\begin{aligned}
\sum_{(t_1,t_2)\in {\rm Rex}_{T}(w)} \beta_{t_1}\beta_{t_2}=&\, \beta_{ r_1} \beta_{r_m}+ \beta_{ r_2} \beta_{r_1}+\dots + \beta_{ r_m} \beta_{r_{m-1}}\\
=&\, (r_1, r_m)- (r_m, r_m r_1 r_m) + (r_2, r_1)- (r_1, r_1r_2r_1)+ \dots + \\
 &\,  (r_m, r_{m-1})- (r_{m-1}, r_{m-1}r_{m} r_{m-1})\\
=&\, 0.
\end{aligned}
\]
This completes the proof.
\end{proof}

\begin{remark}\label{rmk: quadrel}
  In fact,  all relations among $\beta_{t}, t\in T$ are generated by the quadratic relations given in \propref{prop: quadrel}. We illustrate this with the following example. For a general proof, see \cite{LZ22} or \cite{Zha20}.
\end{remark}

\begin{example}\label{exam: dihbasis}
(Dihedral group)	Let $W$ be the  dihedral group as defined in \exref{exam: dih}. By \propref{prop: Balg}, the algebra $\mathcal{B}$ is generated by $\beta_{t}$ for all $t\in T$, where $T$ is totally ordered as in \eqref{eq: totord}. In view of \propref{prop: quadrel}, the generators $\beta_t$ satisfy the following relations: 
	 \[
	 \begin{aligned}
	\beta_{t_1}\beta_{t_i}&=0, \quad 1\leq i\leq m-1,\\
	 	\beta_{t_i}\beta_{t_j}&=0, \quad 2\leq i \leq m, 1\leq j \neq i-1 \leq m, \\
	 	\beta_{t_1} \beta_{t_m}&+\beta_{t_2} \beta_{t_1}+\dots +\beta_{t_m} \beta_{t_{m-1}}=0. 
	 \end{aligned}
	 \]
	
  Recall that $\mathcal{B}=\mathcal{B}_0 \oplus \mathcal{B}_{1}\oplus \mathcal{B}_2$, where $\mathcal{B}_0=\mathbb{Z}$ and  $\mathcal{B}_1$ has a $\mathbb{Z}$-linear basis consisting of all $\beta_t, t\in T$.  By the  relations above,  $\mathcal{B}_2=\mathcal{B}_{\gamma}$ is  spanned by $m-1$ elements $\beta_{(t_i,t_{i-1})}=\beta_{t_i}\beta_{t_{i-1}}, 2\leq i \leq m$, where the sequences  $(t_i,t_{i-1})$ are all decreasing under the total order of $T$. These elements form a basis for $\mathcal{B}_2$ by \thmref{thm: homint}.
  Therefore, the relations above are all quadratic relations among $\beta_{t}, t\in T$.
\end{example}

\subsection{An acyclic chain complex}

Recall that  $\mathcal{B}=\bigoplus_{k=0}^{n}\mathcal{B}_k$ is a $\mathbb{Z}$-graded algebra. Noting that  $z_{{\bf t}}=\sum_{i=1}^{k}(-1)^{k-i}\beta_{{\bf t}(\hat{i})}\in \mathcal{B}_{k-1}$ by \eqref{eq: zexp}, we have a map
\begin{equation}\label{eq: dB}
   d_k: \mathcal{B}_{k}\rightarrow \mathcal{B}_{k-1}, \quad \beta_{{\bf t}} \mapsto z_{{\bf t}}, \quad \forall {\bf t}\in \bigcup_{w\in \mathcal{L}_{k}} {\rm Rex}_{T}(w),\quad  0\leq k\leq n.
\end{equation}
In particular, $z_{t}=d_1(\beta_t)=1$ for any $t\in T$.

\begin{lemma}\label{lem: dprop}
Let $d$ be as in \eqnref{eq: dB}. Then 
   \begin{enumerate}
    \item  We have $d^2=0$, whence we have the following chain complex:
        \begin{equation*}\label{eq: chaincomp}
  \xymatrix{
   0  \ar[r] &  \mathcal{B}_{n} \ar[r]^-{d_{n}} &  \mathcal{B}_{n-1}\ar[r]^-{d_{n-1}} &  \cdots  \ar[r]^-{d_1} &  \mathcal{B}_{0}\ar[r] & 0.
}
\end{equation*}

    \item  Let $w\in \mathcal{L}_{k}$ with $2\leq k\leq n$. Then for each $i=1,2,\dots, k-1$, we have
            \[ d(\beta_{\mathbf{t}})=(-1)^{k-i}(d\beta_{(t_1,\dots,t_i)})\beta_{(t_{i+1},\dots,t_k)}+\beta_{(t_1\dots, t_i)} (d\beta_{(t_{i+1},\dots,t_k)}) \]
    for any ${\bf t}=(t_1,t_2,\dots, t_k)\in {\rm Rex}_{T}(w)$. 
   \end{enumerate}
\end{lemma}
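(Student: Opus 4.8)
The plan is to prove both parts essentially in parallel, since part (2) is really the "Leibniz-type" identity that underlies the $d^2=0$ statement in part (1). First I would establish part (2) directly. Fix ${\bf t}=(t_1,\dots,t_k)\in {\rm Rex}_T(w)$ and an index $i$ with $1\le i\le k-1$. Write ${\bf t}'=(t_1,\dots,t_i)$ and ${\bf t}''=(t_{i+1},\dots,t_k)$, so that by \lemref{lem: prod} we have $\beta_{{\bf t}}=\beta_{{\bf t}'}\beta_{{\bf t}''}$. Now apply the expansion $z_{{\bf t}}=d(\beta_{{\bf t}})=\sum_{j=1}^k(-1)^{k-j}\beta_{{\bf t}(\hat\jmath)}$ from \eqref{eq: zexp}, and split the sum according to whether the removed index $j$ lies in the first block ($1\le j\le i$) or the second block ($i+1\le j\le k$). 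For $j\le i$, the sequence ${\bf t}(\hat\jmath)$ is the concatenation of $(t_1,\dots,\hat t_j,\dots,t_i)$ with ${\bf t}''$, which is still $T$-reduced (removing an entry from a $T$-reduced factorisation of $w$ gives a $T$-reduced factorisation of a smaller element lying below $w\le\gamma$), so $\beta_{{\bf t}(\hat\jmath)}=\beta_{(t_1,\dots,\hat t_j,\dots,t_i)}\,\beta_{{\bf t}''}$ by \lemref{lem: prod}; collecting these gives $\big(\sum_{j=1}^i(-1)^{i-j}\beta_{(t_1,\dots,\hat t_j,\dots,t_i)}\big)\beta_{{\bf t}''}$ up to the overall sign $(-1)^{k-i}$, which is exactly $(-1)^{k-i}(d\beta_{{\bf t}'})\beta_{{\bf t}''}$. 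Symmetrically, the $j\ge i+1$ terms assemble into $\beta_{{\bf t}'}(d\beta_{{\bf t}''})$; here the sign bookkeeping works out because $(-1)^{k-j}$ with $j$ ranging over $i+1,\dots,k$ is precisely the sign pattern for $d\beta_{{\bf t}''}$ viewed as a length-$(k-i)$ sequence. This yields the claimed formula.

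For part (1), I would then deduce $d^2=0$ from part (2) by induction on $k$. The base cases $k\le 1$ are immediate: $d_1\colon\beta_t\mapsto 1$ and $d_0=0$, so $d_1d_0$ and $d_0d_{-1}$ are trivially zero, and in fact one checks $d(d\beta_t)=d(1)=0$ directly. For $k\ge 2$, take any ${\bf t}=(t_1,\dots,t_k)$ and apply part (2) with $i=1$: $d(\beta_{{\bf t}})=(-1)^{k-1}(d\beta_{t_1})\beta_{(t_2,\dots,t_k)}+\beta_{t_1}(d\beta_{(t_2,\dots,t_k)})=(-1)^{k-1}\beta_{(t_2,\dots,t_k)}+\beta_{t_1}\,z_{(t_2,\dots,t_k)}$, since $d\beta_{t_1}=1$. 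Now apply $d$ again. The first summand contributes $(-1)^{k-1}z_{(t_2,\dots,t_k)}$. For the second summand I would apply part (2) once more — this time to the product $\beta_{t_1}\cdot(\text{something in }\mathcal{B}_{k-2})$; strictly, I should apply part (2) to each basis term $\beta_{{\bf s}}$ appearing in $z_{(t_2,\dots,t_k)}=\sum(-1)^{(k-1)-j}\beta_{(t_2,\dots,\hat t_{j+1},\dots,t_k)}$, writing $\beta_{t_1}\beta_{{\bf s}}=\beta_{(t_1,{\bf s})}$ and using part (2) with $i=1$ again: $d(\beta_{t_1}\beta_{{\bf s}})=(-1)^{|{\bf s}|}\beta_{{\bf s}}+\beta_{t_1}(d\beta_{{\bf s}})$. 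Summing over the terms of $z_{(t_2,\dots,t_k)}$, the $(-1)^{|{\bf s}|}\beta_{{\bf s}}$ pieces reassemble (with the right sign) into $(-1)^{k-1}z_{(t_2,\dots,t_k)}$ — exactly cancelling the contribution of the first summand — while the $\beta_{t_1}(d\beta_{{\bf s}})$ pieces sum to $\beta_{t_1}\,d(z_{(t_2,\dots,t_k)})=\beta_{t_1}\,d^2(\beta_{(t_2,\dots,t_k)})=0$ by the induction hypothesis. Hence $d^2(\beta_{{\bf t}})=0$.

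Alternatively — and perhaps more cleanly — I could bypass part (2) in proving $d^2=0$ by working on the chain level inside $\mathcal C$ (or $\mathcal T$) before applying $p$: the operator $\beta_{{\bf t}}\mapsto\sum_i(-1)^{k-i}\beta_{{\bf t}(\hat\imath)}$ is, after transporting through the isomorphism $d\colon\mathcal B_w\xrightarrow{\sim}\widetilde H_{k-2}(e,w)$ of \thmref{thm: homint}(3), essentially the simplicial boundary map restricted to cycles, composed with a shift; since $\partial^2=0$ on the order complex and the maps $d_k$ are the ones induced on reduced homology of the rank-selected pieces (as in the Lemma preceding \propref{prop: selind}), the relation $d_{k-1}\circ d_k=0$ is inherited. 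I would likely present the first (purely algebraic, induction-from-part-(2)) argument as the main proof and mention this homological interpretation as a remark.

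The main obstacle I anticipate is not conceptual but the sign bookkeeping in part (2): making sure the splitting of $\sum_{j=1}^k(-1)^{k-j}\beta_{{\bf t}(\hat\jmath)}$ into the two blocks produces precisely the asymmetric signs $(-1)^{k-i}$ on the left factor and $+1$ on the right factor as stated. The cleanest way to nail this is to note that for $j\le i$ the exponent $k-j=(k-i)+(i-j)$ splits as a constant $(k-i)$ plus the "internal" exponent $i-j$ appropriate to $d\beta_{{\bf t}'}$, whereas for $j\ge i+1$ the exponent $k-j$ is already the internal exponent $(k)-(j)$ appropriate to $d\beta_{{\bf t}''}$ (whose last index is $k$), with no extra constant. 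One should also double-check the degenerate sub-cases where removing an entry from ${\bf t}'$ or ${\bf t}''$ lands on the identity (i.e. when $i=1$ or $i=k-1$), using the conventions $\beta_\emptyset=1$ and $d\beta_t=1$; these are consistent with the formula as written.
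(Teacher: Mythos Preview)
Your proposal is correct and takes essentially the same approach as the paper, with only a minor difference in organisation. The paper first isolates the special case $i=k-1$ of part~(2) as an intermediate identity, uses that (plus induction on $k$) to get $d^2=0$, and then bootstraps part~(2) for general $i$ by induction on $k$; you instead prove part~(2) for all $i$ in one shot by splitting the sum $\sum_{j=1}^k(-1)^{k-j}\beta_{{\bf t}(\hat\jmath)}$ at $j=i$, and then run the induction for $d^2=0$ using the case $i=1$ rather than $i=k-1$. These are dual and equally valid. One small slip in your write-up of part~(1): the $(-1)^{|{\bf s}|}\beta_{{\bf s}}$ pieces actually reassemble into $(-1)^{k-2}z_{(t_2,\dots,t_k)}=-(-1)^{k-1}z_{(t_2,\dots,t_k)}$, not $(-1)^{k-1}z_{(t_2,\dots,t_k)}$; the cancellation still holds, so the argument is fine once that sign is corrected.
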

\begin{proof}
  By \eqref{eq: zexp} for any ${\bf t}=(t_1,t_2,\dots, t_k)\in {\rm Rex}_{T}(w)$ with $w\in \mathcal{L}$, we have 
  \begin{equation}\label{eq: dprop}
    d(\beta_{{\bf t}})=z_{{\bf t}}=\sum_{i=1}^k (-1)^{k-i} \beta_{{\bf t}(\hat{i})}= -d(\beta_{(t_1,\dots,t_{k-1})})\beta_{t_k}+\beta_{(t_1,\dots,t_{k-1})}.
  \end{equation}
  For part (1), we use induction on the length of ${\bf t}$. Clearly, $d^2(1)=0$. Applying $d$ to both sides of  \eqref{eq: dprop}, we have 
  \begin{equation}\label{eq: applyd}
  	  d^2(\beta_{{\bf t}})= -d(d(\beta_{(t_1,\dots,t_{k-1})})\beta_{t_k}) + d(\beta_{(t_1,\dots,t_{k-1})}).
  \end{equation}
Noting that  $d(\beta_{(t_1,\dots,t_{k-1})})= \sum_{i=1}^{k-1}(-1)^{k-1-i} \beta_{(t_1,\dots, \hat{t}_i,\dots,t_{k-1})}$, we have
  \[
  \begin{aligned}
  	d(d(\beta_{(t_1,\dots,t_{k-1})})\beta_{t_k}) &=\sum_{i=1}^{k-1}(-1)^{k-1-i}d ( \beta_{(t_1,\dots, \hat{t}_i,\dots,t_{k-1})} \beta_{t_k}) \\
  	 &= \sum_{i=1}^{k-1}(-1)^{k-1-i} (- d ( \beta_{(t_1,\dots, \hat{t}_i,\dots,t_{k-1})} )  \beta_{t_k}+  \beta_{(t_1,\dots, \hat{t}_i,\dots,t_{k-1})} ) \\
  	 &= -d(d(\beta_{(t_1,\dots,t_{k-1})}))\beta_{t_k}+ d(\beta_{(t_1,\dots,t_{k-1})})\\
  	 &= d(\beta_{(t_1,\dots,t_{k-1})}),
  \end{aligned}
  \]
 where the second equation is a consequence of \eqref{eq: dprop} and the last equation follows from the induction hypothesis. Using the above equation in \eqref{eq: applyd}, we obtain  $ d^2(\beta_{{\bf t}})=0$. 

  For part (2) we  also use induction on $k$. This is trivial if $k=2$. For general $k>2$, note that if $i=k-1$ we obtain \eqref{eq: dprop}. For $i\leq k-2$, by induction we have 
  \[ d(\beta_{(t_1,\dots, t_{k-1})})=(-1)^{k-1-i}(d\beta_{(t_1,\dots,t_i)})\beta_{(t_{i+1},\dots,t_{k-1})}+\beta_{(t_1,\dots, t_i)} (d\beta_{(t_{i+1},\dots,t_{k-1})}). \]
  Using this in \eqref{eq: dprop} we have 
   \begin{align*}
     d(\beta_{{\bf t}})=&(-1)^{k-i} d(\beta_{(t_1,\dots,t_i)})\beta_{(t_{i+1},\dots,t_k)}+\beta_{(t_1,\dots,t_i)}(-d(\beta_{(t_{i+1},\dots, t_{k-1})})\beta_{t_k}+\\
       &\beta_{(t_{i+1},\dots, t_{k-1})})\\
    =& (-1)^{k-i}(d\beta_{(t_1,\dots,t_i)})\beta_{(t_{i+1},\dots,t_k)}+\beta_{(t_1,\dots, t_i)} (d\beta_{(t_{i+1},\dots,t_k)})
   \end{align*}
for any $i\leq k-2$. Therefore, the equation holds for $1\leq i\leq k-1$. 
\end{proof}

\begin{remark}
Note that $(\mathcal{B},d)$ is not a differential graded algebra, as  $d$ does not preserve the multiplication as given in \lemref{lem: prod}. For instance, let $t_1,t_2\in T$ such that $t_1t_2\not \leq \gamma$, then $\beta_{t_1}\beta_{t_2}=0$ by \propref{prop: quadrel} and hence $d(\beta_{t_1}\beta_{t_2})=0$, while $-d(\beta_{t_1})\beta_{t_2}+\beta_{t_1}d(\beta_{t_2})=-\beta_{t_2}+\beta_{t_1}\neq 0$. 
Part (2) of \lemref{lem: dprop} is only true for these nonzero elements 
$\beta_{{\bf t}}=\beta_{t_1}\cdots \beta_{t_k}$  with $(t_1, \dots, t_k )\in {\rm Rex}_{T}(w)$ rather than an arbitrary product of  $\beta_t$'s. 
\end{remark}

\begin{proposition}\label{prop: ayccomp}
  Let  $d_k$ be the linear map as in \eqref{eq: dB}  for $1\leq k\leq n$.
  \begin{enumerate}
     \item We have ${\rm Im}\,d_k=\widetilde{H}_{k-2}(\mathcal{L}_{[k-1]})$ for $1\leq k\leq n$, and ${\rm Ker}\, d_k = \widetilde{H}_{k-1}(\mathcal{L}_{[k]})$ for $1\leq k\leq n-1$ and ${\rm Ker}\, d_n=0$. It follows that
       \[
\mathcal{B}_k \cong \begin{cases}
  \widetilde{H}_{k-2}(\mathcal{L}_{[k-1]})\oplus  \widetilde{H}_{k-1}(\mathcal{L}_{[k]}), & \quad 1\leq k\leq n-1,\\
   \widetilde{H}_{n-2}(\mathcal{L}_{[n-1]}), & \quad k=n.
\end{cases}
       \]
     \item   The chain complex $(\mathcal{B}_k,d_k)$ is acyclic.
  \end{enumerate}
\end{proposition}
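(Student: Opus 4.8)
The plan is to deduce everything from the chain complex property $d^2=0$ (\lemref{lem: dprop}), the surjectivity of each $d_k$ onto the appropriate homology group (\thmref{thm: basisrksel}), and a rank count via \corref{coro: rankBw} and \propref{prop: dimtoprksel}. First I would handle part (1). For $1\le k\le n$, \thmref{thm: basisrksel}(2) (together with \propref{prop: cycle} for the case $k=1$, where $\widetilde H_{-1}(\mathcal{L}_{[0]})=\widetilde H_{-1}(\emptyset)=\mathbb{Z}$ and $d_1(\beta_t)=1$ for all $t$, so $d_1$ is onto) gives $\on{Im}d_k=\widetilde H_{k-2}(\mathcal{L}_{[k-1]})$. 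For the kernel, observe that by \lemref{lem: dprop}(1) we have $\on{Im}d_{k+1}\subseteq \on{Ker}d_k$, and $\on{Ker}d_k$ consists of cycles in $\widetilde H_{k-2}(\mathcal{L}_{[k-1]})$ that are actually boundaries at level $k$, i.e.\ become zero when the top rank is cut; more precisely I claim $\on{Ker}d_k=\widetilde H_{k-1}(\mathcal{L}_{[k]})$, realised inside $\mathcal{B}_k$ via the injection of \thmref{thm: homint} applied componentwise. Indeed a class $\sum\lambda_{\bf t}\beta_{\bf t}\in\mathcal{B}_k$ lies in $\on{Ker}d_k$ iff $\sum\lambda_{\bf t}z_{\bf t}=0$ in $\widetilde H_{k-2}(\mathcal{L}_{[k-1]})$; by the standard long-exact-sequence / rank-selection description (Björner--Wachs, cf.\ the argument behind \propref{prop: dimtoprksel}), the simplicial cycles of $\Delta(\mathcal{L}_{[k]})$ of top dimension $k-1$ are exactly the $(k-1)$-chains supported on rank-$k$ maximal chains whose boundary (the truncation $d$) vanishes in $\mathcal{L}_{[k-1]}$, and these are precisely the elements of $\mathcal{B}_k$ killed by $d_k$. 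For $k=n$ there is no rank $n+1$, and since $\mathcal{L}_{[n]}=\bar{\mathcal{L}}$ is itself Cohen--Macaulay of dimension $n-1$ there are no $n$-cycles, so $\on{Ker}d_n=0$; equivalently $d_n$ is injective because $\bigoplus_w d|_{\mathcal{B}_w}$ is injective on each $\mathcal{B}_w\subseteq C_{n-1}(w)$ by \lemref{lem: restheta} and distinct $w\in\mathcal{L}_n$ contribute disjoint basis vectors. The splitting $\mathcal{B}_k\cong \on{Ker}d_k\oplus \on{Im}d_k$ then follows because $\on{Im}d_k=\widetilde H_{k-2}(\mathcal{L}_{[k-1]})$ is a free abelian group (it has a basis $\{z_{\bf t}\}_{{\bf t}\in\mathcal{D}_{[k-1]}}$ by \thmref{thm: basisrksel}(1)), so the short exact sequence $0\to\on{Ker}d_k\to\mathcal{B}_k\to\on{Im}d_k\to0$ splits; substituting the two identifications gives the displayed formula.

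For part (2), acyclicity means $\on{Ker}d_k=\on{Im}d_{k+1}$ for all $k$ (with the convention $\on{Im}d_{n+1}=0$ and $d_0$ the zero map). Using part (1), $\on{Ker}d_k=\widetilde H_{k-1}(\mathcal{L}_{[k]})$ and $\on{Im}d_{k+1}=\widetilde H_{k-1}(\mathcal{L}_{[k]})$, so the two agree on the nose for $1\le k\le n-1$; at the top, $\on{Ker}d_n=0=\on{Im}d_{n+1}$, and at the bottom, in degree $0$ we have $\on{Ker}d_0=\mathcal{B}_0=\mathbb{Z}$ while $\on{Im}d_1=\widetilde H_{-1}(\mathcal{L}_{[0]})=\widetilde H_{-1}(\emptyset)=\mathbb{Z}$, and $d_1$ is surjective onto $\mathbb{Z}$, so again they coincide. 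Hence every homology group of $(\mathcal{B}_\bullet,d_\bullet)$ vanishes and the complex is acyclic. (One can also phrase this more structurally: the chain complex is, up to the isomorphisms $\mathcal{B}_k\cong WH_k(\mathcal{L})$ of \rmkref{rmk: Whom}, the chain complex computing $\widetilde H_*$ of the successive rank-truncations, and the telescoping of the rank-selection exact sequences forces acyclicity; but the direct rank-count argument above is self-contained given the earlier results.)

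The main obstacle is the precise identification $\on{Ker}d_k=\widetilde H_{k-1}(\mathcal{L}_{[k]})$ — i.e.\ showing that the kernel of the truncation map on $\mathcal{B}_k$ is not merely abstractly isomorphic to the top homology of $\mathcal{L}_{[k]}$ but equals it under the natural inclusion of $(k-1)$-chains. This requires carefully matching the simplicial boundary operator $\partial$ on $\Delta(\mathcal{L}_{[k]})$ with the algebraic map $d$: a top-dimensional simplicial chain on $\mathcal{L}_{[k]}$ is a $\mathbb{Z}$-combination of maximal chains $e<w_1<\dots<w_k$ (equivalently sequences in $\bigcup_{w\in\mathcal{L}_k}\on{Rex}_T(w)$), its being a cycle means its $\partial$-image vanishes, and one must check that the part of $\partial$ that removes the top element $w_k$ is exactly recorded by $d$ while the remaining face maps are automatically accounted for because the chain already lies in $\mathcal{B}_k=\bigoplus_w\mathcal{B}_w$, whose elements are $d$-closed in each $(e,w)$ by \propref{prop: cycle}(1). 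Once this compatibility is in place, the rank count closes the argument with no slack, since by \corref{coro: rankBw} and \propref{prop: dimtoprksel} the ranks of $\on{Im}d_k$ and $\on{Ker}d_k$ add up to $\on{rank}\mathcal{B}_k$ exactly.
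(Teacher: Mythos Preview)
Your approach is correct and in fact takes a cleaner route for the kernel identification than the paper does. The paper establishes ${\rm Ker}\,d_k=\widetilde H_{k-1}(\mathcal{L}_{[k]})$ by first observing the inclusion ${\rm Im}\,d_{k+1}=\widetilde H_{k-1}(\mathcal{L}_{[k]})\subseteq{\rm Ker}\,d_k$ via $d^2=0$, then doing a M\"obius-function rank count (using \corref{coro: rankBw} and \propref{prop: dimtoprksel}) to see the two groups have equal rank, and finally running a leading-term/torsion argument analogous to \thmref{thm: basisrksel} to rule out any finite-index gap. Your argument bypasses the rank count and torsion step entirely: once you know that for $c\in\mathcal{B}_k$ the simplicial boundary $\partial c$ in $\Delta(\mathcal{L}_{[k]})$ equals $(-1)^{k-1}dc$ (because the ``internal'' faces, those not removing the top element, assemble to $\partial(z_{\bf t})=0$ in each $\Delta(e,w)$ by \propref{prop: cycle}(1)), you get ${\rm Ker}\,d_k=\mathcal{B}_k\cap\widetilde H_{k-1}(\mathcal{L}_{[k]})$ immediately; the containment $\widetilde H_{k-1}(\mathcal{L}_{[k]})\subseteq\mathcal{B}_k$ then comes for free from ${\rm Im}\,d_{k+1}=\widetilde H_{k-1}(\mathcal{L}_{[k]})$.

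One point of exposition to tighten: your sentence ``the simplicial cycles of $\Delta(\mathcal{L}_{[k]})$ of top dimension $k-1$ are exactly the $(k-1)$-chains \dots\ whose boundary (the truncation $d$) vanishes'' is false as stated for arbitrary chains in $C_{k-1}$; it is only true once the chain is already known to lie in $\mathcal{B}_k$. You do say this correctly in the obstacle paragraph, but the earlier formulation should be amended so that membership in $\mathcal{B}_k$ is part of the hypothesis from the start. With that fixed, your argument is a genuine simplification of the paper's.
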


\begin{proof}
 We just need to prove part (1), which leads to part (2) immediately.  Noting that  $\widetilde{H}_{n-2}(\mathcal{L}_{[n-1]})=\widetilde{H}_{n-2}(e,\gamma)$ is a free abelian subgroup of $\mathcal{B}_{n-1}$ and $\mathcal{B}_n=\mathcal{B}_{\gamma}$, the linear map $d_n: \mathcal{B}_n\rightarrow  \widetilde{H}_{n-2}(\mathcal{L}_{[n-1]})$ is an isomorphism by part (3) of \thmref{thm: homint}. Therefore, we have $ {\rm Im}\,d_n=\widetilde{H}_{n-2}(\mathcal{L}_{[n-1]})$ and ${\rm Ker}\, d_n =0$. 

 If $2\leq k\leq n-1$, then by part (2) of \thmref{thm: basisrksel}, the linear map $d_k: \mathcal{B}_k\rightarrow \widetilde{H}_{k-2}(\mathcal{L}_{[k-1]})$ is surjective. It follows that ${\rm Im}\, d_k= \widetilde{H}_{k-2}(\mathcal{L}_{[k-1]})$ for $2\leq k\leq n-1$,  and ${\rm Im}\, d_1=\mathbb{Z}$ as $d(\beta_t)=1$ for any $t\in T$. It remains to prove that ${\rm Ker}\, d_k = \widetilde{H}_{k-1}(\mathcal{L}_{[k]})$.  

  By part (1) of \lemref{lem: dprop} we have $d^2=0$, whence ${\rm Im}\, d_{k+1}=\widetilde{H}_{k-1}(\mathcal{L}_{[k]})\subseteq {\rm Ker}\, d_k$ for $1\leq k \leq n-1$. We now prove that $\widetilde{H}_{k-1}(\mathcal{L}_{[k]})$ and ${\rm Ker}\, d_k$ have equal rank. 
   By the definition of the M\"obius function  and \corref{coro: rankBw}, we have
\[
\mu(\hat{\mathcal{L}}_{[k]})= -\sum_{i=0}^{k}\sum_{w\in \mathcal{L}_i}\mu(w)= \sum_{i=0}^{k}(-1)^{i+1} {\rm rank}\, \mathcal{B}_i. 
\]
It follows from \propref{prop: dimtoprksel} that for each $k$
$${\rm rank}\, \widetilde{H}_{k-1}(\mathcal{L}_{[k]})=(-1)^{k+1}\mu(\hat{\mathcal{L}}_{[k]})=  \sum_{i=0}^{k}(-1)^{k-i} {\rm rank}\, \mathcal{B}_i. $$
Now using the equation ${\rm rank}\, \widetilde{H}_{k-2}(\mathcal{L}_{[k-1]})=  \sum_{i=0}^{k-1}(-1)^{k-1-i} {\rm rank}\, \mathcal{B}_i $ in the right side of the above equation, we have 
\[
{\rm rank}\, \widetilde{H}_{k-1}(\mathcal{L}_{[k]})= {\rm rank}\, \mathcal{B}_k- {\rm rank}\, \widetilde{H}_{k-2}(\mathcal{L}_{[k-1]})= {\rm rank}\, \mathcal{B}_k- {\rm rank}\, {\rm Im}\, d_k={\rm rank} \, {\rm Ker}\, d_k.
\]
 Thus, $\widetilde{H}_{k-1}(\mathcal{L}_{[k]})$ is a subgroup of ${\rm Ker}\,{d_k}$ of maximal rank. It remains to show that every element of ${\rm Ker}\, d_k$ is a $\mathbb{Z}$-linear combination of the basis elements $z_{{\bf t}}, {\bf t}\in \mathcal{D}_{[k]}$ of $\widetilde{H}_{k-1}(\mathcal{L}_{[k]})$. We omit this since the proof is analogous to that of \thmref{thm: basisrksel}. Therefore, we have ${\rm Ker}\, d_k = \widetilde{H}_{k-1}(\mathcal{L}_{[k]})$ for $1\leq k\leq n-1$. 
\end{proof}

\begin{remark}
   Recall from \remref{rmk: Whom} that $\mathcal{B}_k$ is isomorphic to the $k$-th Whitney homology $WH_k(\mathcal{L})$ as free abelian groups.  Part (1) of \propref{prop: ayccomp} implies that the $k$-th Whitney homology $WH_k(\mathcal{L})$ is isomorphic to $\widetilde{H}_{k-2}(\mathcal{L}_{[k-1]})\oplus  \widetilde{H}_{k-1}(\mathcal{L}_{[k]})$.  Sundaram proved this fact  in a more general context where the lattice $\mathcal{L}$ is replaced with a Cohen-Macaulay poset \cite[Proposition 1.9]{Sun94}. 
\end{remark}

We now summarise the main properties of $\mathcal{B}$. Combining Theorems  \ref{thm: homint}, \ref{thm: basisrksel}, Propositions \ref{prop: Balg}, \ref{prop: ayccomp}, and \corref{coro: rankBw}, we have the following theorem.

\begin{theorem}\label{thm: main}
   Let $\mathcal{B}=\bigoplus_{w\in \mathcal{L}}\mathcal{B}_{w}$  be the  abelian group spanned by $\beta_{{\bf t}}$ for all sequences ${\bf t}\in \bigcup_{w\in \mathcal{L}} {\rm Rex}_{T}(w)$, and let $\mathcal{B}_k=\bigoplus_{w\in \mathcal{L}_k} \mathcal{B}_w$. Then we have
\begin{enumerate}
\item The free abelian group $\mathcal{B}=\bigoplus_{k=0}^n\mathcal{B}_k$ is a  finite dimensional graded  algebra over $\mathbb{Z}$ generated by the homogeneous elements $\beta_{t}, t\in T$ of degree $1$, with multiplication defined by 
 \[
    \beta_{{\bf t}} \beta_{{\bf t}'}=
\begin{cases}
  \beta_{({\bf t},{\bf t}')}, & \text{if } uw\leq \gamma \text{\, and \,}\ell_{T}(uw)=\ell_{T}(u)+ \ell_{T}(w),\\
  0,  &\text{otherwise.}
\end{cases}
 \]
 for any ${\bf t}\in {\rm Rex}_T(u)$ and  ${\bf t}'\in {\rm Rex}_T(w)$ with $u,w\in \mathcal{L}$.

\item  For $1\leq k\leq n$,  the $\mathbb{Z}$-linear map 
   $ d_k: \mathcal{B}_{k}\rightarrow \widetilde{H}_{k-2}(\mathcal{L}_{[k-1]})$ defined by 
   \begin{equation*}
     d_{k}(\beta_{(t_1, \dots, t_k)})=\sum_{i=0}^k(-1)^{k-i}\beta_{(t_1, \dots, \hat{t}_i, \dots, t_k)}, \quad \forall \beta_{(t_1, \dots, t_k)}\in \mathcal{B}_k
   \end{equation*}
 is surjective. Moreover, we have the isomorphisms of free abelian groups:
   \[
 \mathcal{B}_k \cong \begin{cases}
 	\widetilde{H}_{k-2}(\mathcal{L}_{[k-1]})\oplus  \widetilde{H}_{k-1}(\mathcal{L}_{[k]}), & \quad 1\leq k\leq n-1,\\
 	\widetilde{H}_{n-2}(\mathcal{L}_{[n-1]}), & \quad k=n.
 \end{cases}
 \]

 Hence the chain complex $(\mathcal{B}_k, d_k)$ is acyclic. 

\item For each $w\in \mathcal{L}$ with $k=\ell_{T}(w)\geq 1$, we have $\mathcal{B}_w \cong \widetilde{H}_{k-2}(e,w)$ as free abelian groups, where the isomorphism is given by the restriction of $d_k$ to $\mathcal{B}_w$.

\item For each $w\in \mathcal{L}$, the elements $\beta_{{\bf t}}, {\bf t}\in \mathcal{D}_w$ constitute a $\mathbb{Z}$-basis for $ \mathcal{B}_w$.

\item The algebra $\mathcal{B}$ has the Poincar\'e polynomial 
   \[ {\rm Poin}_{\mathcal{B}}(q):=\sum_{k=0}^n {\rm rank}\, \mathcal{B}_k\, q^k= \sum_{w\in \mathcal{L}} \mu(w) (-q)^{\ell_{T}(w)}, \]
   where $\mu$ is the M\"{o}bius function of $\mathcal{L}$. 
\end{enumerate}  
\end{theorem}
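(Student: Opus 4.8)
The plan is to assemble \thmref{thm: main} directly from the results already established, since it is a consolidation of Theorems~\ref{thm: homint} and~\ref{thm: basisrksel}, Propositions~\ref{prop: Balg} and~\ref{prop: ayccomp}, and \corref{coro: rankBw}; essentially no new argument is required, and the only step not yet carried out explicitly is the computation of the Poincar\'e polynomial in part~(5), which is a one-line rearrangement.

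For part~(1), the assertion that $\mathcal{B}=\bigoplus_{k=0}^n\mathcal{B}_k$ is a finite dimensional graded associative $\mathbb{Z}$-algebra generated in degree $1$ by the elements $\beta_t$ ($t\in T$) is exactly \propref{prop: Balg}, and the explicit multiplication rule $\beta_{\bf t}\beta_{{\bf t}'}=\beta_{({\bf t},{\bf t}')}$ when $uw\le\gamma$ with $\ell_T(uw)=\ell_T(u)+\ell_T(w)$, and $0$ otherwise, is \lemref{lem: prod}. Finiteness of the rank is clear because $\mathcal{L}$ is finite and each $\mathcal{B}_w\subseteq C_{k-1}(w)$ has finite rank. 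Parts~(3) and~(4) are literally parts~(3) and~(1) of \thmref{thm: homint}.

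For part~(2), I would split the surjectivity of $d_k\colon\mathcal{B}_k\to\widetilde{H}_{k-2}(\mathcal{L}_{[k-1]})$ (note $\widetilde{H}_{k-2}(\mathcal{L}_{[k-1]})\subseteq\mathcal{B}_{k-1}$, so this is the map of \eqref{eq: dB} corestricted) into three ranges: for $2\le k\le n-1$ it is part~(2) of \thmref{thm: basisrksel}; for $k=n$ it is the isomorphism $d_n\colon\mathcal{B}_n=\mathcal{B}_\gamma\to\widetilde{H}_{n-2}(e,\gamma)=\widetilde{H}_{n-2}(\mathcal{L}_{[n-1]})$ of \thmref{thm: homint}(3); and for $k=1$ it follows from $d_1(\beta_t)=1$ together with $\widetilde{H}_{-1}(\mathcal{L}_{[0]})=\widetilde{H}_{-1}(\emptyset)=\mathbb{Z}$. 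The direct sum decompositions of $\mathcal{B}_k$ and the acyclicity of the complex $(\mathcal{B}_k,d_k)$ are exactly parts~(1) and~(2) of \propref{prop: ayccomp}.

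For part~(5), by \corref{coro: rankBw} one has $\mathrm{rank}\,\mathcal{B}_w=(-1)^{\ell_T(w)}\mu(w)$ for every $w\in\mathcal{L}$, so, using $\mathcal{B}_k=\bigoplus_{w\in\mathcal{L}_k}\mathcal{B}_w$ and $\ell_T(w)=k$ for $w\in\mathcal{L}_k$,
\[
  \mathrm{Poin}_{\mathcal{B}}(q)=\sum_{k=0}^n\mathrm{rank}\,\mathcal{B}_k\,q^k
  =\sum_{k=0}^n\sum_{w\in\mathcal{L}_k}(-1)^{k}\mu(w)\,q^k
  =\sum_{w\in\mathcal{L}}\mu(w)(-q)^{\ell_T(w)}.
\]
The only genuine ``obstacle'' is bookkeeping: matching the indexing conventions (the homological degree shifts between $\widetilde{H}_{k-2}$, $\widetilde{H}_{k-1}$ and the algebra grading) and keeping track of which earlier statement supplies each case of surjectivity and of acyclicity.
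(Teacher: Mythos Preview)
Your proposal is correct and follows essentially the same approach as the paper: the theorem is stated there as a direct combination of Theorems~\ref{thm: homint} and~\ref{thm: basisrksel}, Propositions~\ref{prop: Balg} and~\ref{prop: ayccomp}, and \corref{coro: rankBw}, with no additional argument given. Your write-up is in fact more explicit than the paper's, since you spell out the case split for the surjectivity of $d_k$ and carry out the Poincar\'e polynomial computation in part~(5).
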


\section{Application to the Milnor fibre}\label{sec: app1}
%In his seminal work \cite{Mil68}, John Milnor introduced a fibration associated with each singular point of a complex hypersurface. He proved that if the hypersurface has isolated singularity, the corresponding Milnor fibre has the the homotopy type of a bouquet of spheres. For non-isolated hypersurface singularities, very little is known about the topological invariants of the Milnor fibre. One way to reduce  difficulty is to introduce group symmetries on the Milnor fibre. For this reason, we are concerned with the Milnor fibre of the reflection arrangement of a finite Coxeter group.
 In this section, we will give two chain complexes which compute the integral homology of the Milnor fibre of the reflection arrangement and the Milnor fibre of the discriminant of $W$, respectively. Computational results will be tabulated in Appendix \ref{sec: appd}.

\subsection{The NCP model of the Milnor fibre}
We refer to \cite{BFW18,DL16,Mil68,Zha20} for background concerning the Milnor fibre.
Recall that $W$ is a finite real reflection group whose action on $\mathbb{R}^{n}$ can be complexified to an action on $\mathbb{C}^{n}=\mathbb{R}^{n}\otimes \mathbb{C}$. For each $t\in T$, let $H_t \subset  \mathbb{C}^{n}$ be the complexified hyperplane and $\lambda_t: \mathbb{C}^{n}\rightarrow \mathbb{C}$  a linear form such that $H_{t}={\rm Ker}(\lambda_t)$. The finite collection of reflecting hyperplanes 
\[ \mathcal{A}:=\{H_{t}\mid t\in T \} \]
is called the \emph{reflection arrangement} of $W$ in $\mathbb{C}^{n}$, and the space $M= \mathbb{C}^n - \bigcup_{t\in T} H_t$ is  called the \emph{hyperplane complement} of $W$.  We define the  homogeneous polynomials $Q$ and $Q_0$ by
\begin{equation}\label{eq: polyQ}
  Q_0:= \prod_{t\in T} \lambda_t, \quad Q:=Q_0^2=\prod_{t\in T} \lambda_t^2. 
\end{equation}
Note that for any simple reflection $s\in S$, we have $s\lambda_s=-\lambda_s$ and $s$ permutes all other linear forms $\lambda_t, s\neq t\in T$. It follows that $ wQ_0={\rm det}(w) Q_0$ for any $ w\in W$,  and hence $Q=Q_0^2$ is a $W$-invariant polynomial. 

The polynomial $Q_0$ has non-isolated singular points. The Milnor fibration of $Q_0$ is the locally trivial fibration $Q_0: M\rightarrow \mathbb{C}-\{0\}$, with the Milnor fibre $F_{Q_0}= Q_{0}^{-1}(1)$ and the geometric monodromy $\zeta: F_{Q_0}\rightarrow F_{Q_0}, h(x_1, \dots ,x_n)= (\zeta_dx_1, \dots , \zeta_dx_n)$, where $\zeta_d= {\rm exp}(2\pi i/d)$ with $d=|T|$. The  geometric monodromy $\zeta$ realises $F_{Q_0}$ as a $d$-fold cyclic covering map of the projective space $\mathbb{P}(M)$ of $M$, i.e. $F_{Q_0}/\mu_d\cong \mathbb{P}(M)$, where $\mu_d$ is the monodromy group of order $d$ generated by $\zeta$. 

The Milnor fibre $F_Q$ is a disjoint union of two connected components $F_{Q_0}$ and $Q_{0}^{-1}(-1)$, which are homeomorphic to each other \cite{DL16}. The Coxeter group $W$ acts freely on $F_Q$.

\begin{definition}\label{def: ncpmodfull}
  \cite{BFW18}The NCP model $\mathcal{F}$ of the Milnor fibre $F_{Q}=Q^{-1}(1)$ is the $(n-1)$-dimensional  finite simplicial complex whose  $k$-simplices are of the form
    \[ (m,w, e< w_1< \dots < w_k), \quad 0\leq m < n-\ell_{T}(w_k), w\in W, \]
    where  $e< w_1< \dots < w_k$ is a chain of $\mathcal{L}$. Each $k$-simplex has $k+1$ maximal faces 
    $( m,w, e< w_1< \dots < \widehat{w_i}<\dots < w_k )$ for $1\leq i\leq k$  
    and the remaining one 
  $
    (m+\ell_{T}(w_1), ww_1, e< w_1^{-1}w_2 < \dots < w_1^{-1}w_k)$.
\end{definition}

Note that by the condition on the integer $m$, the  element $w_k$ in the $k$-simplex $(m,w, e< w_1< \dots < w_k)$ should be strictly less than the Coxeter element  $\gamma$ of $\mathcal{L}$. The Coxeter group $W$ acts on the $k$-simplex by left multiplication on $w$. The generator of the monodromy group  of $F_{Q}$ sends the $k$-simplex to  $(m-1,w, e< w_1< \dots < w_k)$ if $0<m<n-\ell_{T}(w_k)$ and $(-1)^k (\ell_{T}(w_1)-1, ww_1, e< w_1^{-1}w_2< \dots <w_1^{-1}w_k< w_1^{-1}\gamma)$ if $m=0$. One can check that this monodromy group has order $nh$, with $h$ being the Coxeter number, i.e. the order of the Coxeter element.

We define the $k$-th boundary map of $\mathcal{F}$ by 
 \begin{equation}\label{eq: boumapQ}
   \begin{aligned}
\partial_k(m, w, e< w_1 < \dots < w_k)= &(m+ \ell_T(w_1), ww_1, e < w_1^{-1}w_2< \dots < w_1^{-1}w_k) \\
 & +\sum_{i=1}^k (-1)^i ( m,w, e< w_1< \dots < \widehat{w_i}<\dots < w_k ).
\end{aligned}
 \end{equation}
 Then the $k$-th integral homology of $\mathcal{F}$ is  defined by
\[
   H_k(\mathcal{F}; \mathbb{Z}):= {\rm Ker}\, \partial_k/ {\rm Im}\, \partial_{k+1}, \quad 0\leq k\leq n-1.
\]
It is easily verified that the actions of both $W$ and $\mu_d$ commute with $\partial_k$, and hence each $ H_k(\mathcal{F}; \mathbb{Z})$ is a $W\times \mu_d$-module. 

\subsection{A filtration of the NCP model}

We may define a filtration of $\mathcal{F}$ as follows. Define $\mathcal{F}_i$  to be the union of the  $i$-dimensional simplices of the  form
$
(n-i-1, w, e< w_1 <\dots < w_i)  
$
together with their faces, where $w\in W$ and  $w_1 < \dots < w_i$ is a maximal chain of the subposet $\mathcal{L}_{[i]}$ with $\ell_{T}(w_j)=j$ for $1\leq j\leq i$. Note that $\mathcal{F}_i$ is an $i$-dimensional subcomplex of $\mathcal{F}$.  We obtain the  filtration of $\mathcal{F}$ by the subcomplexes $\mathcal{F}_i$
\begin{equation}\label{eq: Ffiltration}
   \mathcal{F}=\mathcal{F}_{n-1}\supset \mathcal{F}_{n-2}\supset \dots \supset \mathcal{F}_1 \supset \mathcal{F}_0\supset \mathcal{F}_{-1}=\emptyset.
\end{equation}

We use the spectral sequence of this filtration to calculate the homology of $\mathcal{F}$; refer to \cite[Chapter III]{BT82} for the spectral sequence. For each $k\leq i$, let $C_k(\mathcal{F}_i)$ be the $k$-th chain group of the simplicial subcomplex $\mathcal{F}_i$, that is, the free abelian group spanned by $k$-simplices of $\mathcal{F}_i$. In the spectral sequence associated to the filtration, the $E^0$-page  is given by the following free $\mathbb{Z}$-modules:
\[E^{0}_{i,j}= C_{i+j}(\mathcal{F}_i)/ C_{i+j}(\mathcal{F}_{i-1})\cong C_{i+j, n-i-1},\]
where  $C_{i+j, n-i-1}$ is freely spanned by the $(i+j)$-simplices of the form $(n-i-1,w, e<w_1<\dots <w_{i+j})$ with $w\in W$ and $0\leq n-i-1< n-\ell_{T}(w_{i+j})$.
 The boundary maps $E^0_{i,j}\rightarrow E^0_{i, j-1}$ on the $E^0$-page are induced by $\partial_k$ as defined in \eqnref{eq: boumapQ}, and the homology groups of these maps give rise to the $E^1$-page
\[ E^1_{i,j}=H_{i+j}(\mathcal{F}_i, \mathcal{F}_{i-1}), \]
which is the  relative integral homology of the pair $(\mathcal{F}_i, \mathcal{F}_{i-1})$. 

The following lemma describes the $E^1$-page.

\begin{lemma}\label{lem: Homoquo}
    For each $i=0,\dots, n-1$, we have the following isomorphisms
    \[ H_{i}(\mathcal{F}_i, \mathcal{F}_{i-1})\cong \mathbb{Z}W \otimes \widetilde{H}_{i-1}(\mathcal{L}_{[i]}) \cong \mathbb{Z}W \otimes d(\mathcal{B}_{i+1}),
    \]
    as free modules over the group ring $\mathbb{Z}W$, and for  each $k<i$ the homology group $H_{k}(\mathcal{F}_i, \mathcal{F}_{i-1})$ is trivial.
\end{lemma}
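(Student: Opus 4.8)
The plan is to read off the relative chain complex $C_\ast(\mathcal{F}_i,\mathcal{F}_{i-1})$ directly from the definitions and to recognise it, after a single degree shift, as a copy — one for each element of $W$ — of the reduced simplicial chain complex of the order complex $\Delta(\mathcal{L}_{[i]})$. First I would pin down the differential induced on the $E^0$-row $E^0_{i,\bullet}\cong C_{\bullet,\,n-i-1}$. The crucial observation is that in \eqref{eq: boumapQ} the first summand raises the integer coordinate $m$ by $\ell_T(w_1)\ge 1$, while the summands obtained by deleting some $w_j$ leave $m$ unchanged; hence $m$ is non-decreasing along faces, so the ``remaining'' face of a generator $(n-i-1,w,e<w_1<\dots<w_k)$ has $m$-coordinate $\ge n-i$ and already lies in $\mathcal{F}_{i-1}$, hence is killed in the quotient $C_{\bullet,\,n-i-1}=C_\bullet(\mathcal{F}_i)/C_\bullet(\mathcal{F}_{i-1})$. (This is exactly what the identification $E^0_{i,j}\cong C_{i+j,\,n-i-1}$ quoted just before the lemma encodes: concretely, $(n-i-1,w,e<w_1<\dots<w_k)$ is a face of a top $i$-simplex obtained by refining $e<w_1<\dots<w_k$ to a maximal chain of $\mathcal{L}_{[i]}$, and any face of a top $i$-simplex with $m$-coordinate $\ge n-i$ is already a face of its ``remaining'' face, which is a top $(i-1)$-simplex of $\mathcal{F}_{i-1}$.) Consequently the induced differential is
\[
\bar\partial_k\bigl(n-i-1,\,w,\,e<w_1<\dots<w_k\bigr)=\sum_{j=1}^{k}(-1)^j\bigl(n-i-1,\,w,\,e<w_1<\dots<\widehat{w_j}<\dots<w_k\bigr),
\]
which carries the $W$-coordinate along unchanged.

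Next I would set up the degree-shifting bijection sending $(n-i-1,w,e<w_1<\dots<w_k)$ to $w\otimes\{w_1<\dots<w_k\}$, where $\{w_1<\dots<w_k\}$ is regarded as a $(k-1)$-simplex of $\Delta(\mathcal{L}_{[i]})$ when $k\ge 1$ and as the augmentation generator when $k=0$ (for any generator one has $1\le\ell_T(w_j)\le i$, so the chain really does lie in $\mathcal{L}_{[i]}$). Under this bijection together with the identity on the $\mathbb{Z}W$-factor, $\bar\partial$ becomes $\pm$ the reduced simplicial differential of $\Delta(\mathcal{L}_{[i]})$, so $(C_{\bullet,\,n-i-1},\bar\partial)\cong \mathbb{Z}W\otimes_{\mathbb{Z}}\widetilde{C}_{\bullet-1}\bigl(\Delta(\mathcal{L}_{[i]})\bigr)$ as chain complexes, the overall sign being irrelevant. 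Since $\mathbb{Z}W$ is $\mathbb{Z}$-free, taking homology yields
\[
E^1_{i,j}=H_{i+j}(\mathcal{F}_i,\mathcal{F}_{i-1})\;\cong\;\mathbb{Z}W\otimes_{\mathbb{Z}}\widetilde{H}_{i+j-1}(\mathcal{L}_{[i]}),
\]
and this is an isomorphism of $\mathbb{Z}W$-modules because the $W$-action on $\mathcal{F}$ affects only the $W$-coordinate, i.e.\ only the $\mathbb{Z}W$-factor.

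Finally I would invoke Cohen--Macaulayness: by \propref{prop: dimtoprksel} (citing \cite{Bac80}) the poset $\mathcal{L}_{[i]}$ is Cohen--Macaulay, and $\Delta(\mathcal{L}_{[i]})$ is pure of dimension $i-1$, so $\widetilde{H}_m(\mathcal{L}_{[i]})=0$ for $m\ne i-1$. Hence $H_k(\mathcal{F}_i,\mathcal{F}_{i-1})=0$ for $k<i$, and $H_i(\mathcal{F}_i,\mathcal{F}_{i-1})\cong \mathbb{Z}W\otimes\widetilde{H}_{i-1}(\mathcal{L}_{[i]})$. For the second isomorphism I would quote \propref{prop: ayccomp}(1) (equivalently \thmref{thm: basisrksel}(2)), which gives $\widetilde{H}_{i-1}(\mathcal{L}_{[i]})={\rm Im}\,d_{i+1}=d(\mathcal{B}_{i+1})$; since this is a free abelian group, $\mathbb{Z}W\otimes_{\mathbb{Z}}d(\mathcal{B}_{i+1})$ is a free $\mathbb{Z}W$-module, which completes the proof. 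I expect the only genuinely delicate point to be the bookkeeping in the first paragraph — checking that the ``remaining'' face is correctly accounted for in the relative complex — but this is forced by the monotonicity of the $m$-coordinate and is in any case already built into the quoted description of $E^0_{i,j}$; everything else is routine.
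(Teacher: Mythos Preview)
Your proof is correct and follows essentially the same approach as the paper: identify the relative chain complex with $\mathbb{Z}W\otimes \widetilde{C}_{\bullet-1}(\Delta(\mathcal{L}_{[i]}))$ by observing that the ``remaining'' face term of \eqref{eq: boumapQ} has strictly larger $m$-coordinate and hence dies in the quotient, then invoke Cohen--Macaulayness of $\mathcal{L}_{[i]}$ and \thmref{thm: basisrksel}. Your justification of why the remaining face lands in $\mathcal{F}_{i-1}$ is slightly more explicit than the paper's, but the argument is otherwise identical.
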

\begin{proof}
Recall that the $k$-th chain group of the pair $(\mathcal{F}_i, \mathcal{F}_{i-1})$ is 
\[C_k(\mathcal{F}_i, \mathcal{F}_{i-1})=C_k(\mathcal{F}_i)/ C_k(\mathcal{F}_{i-1})\cong C_{k, n-i-1}, \quad 0\leq k\leq i \]
 where $C_{k, n-i-1}$ is freely spanned by $k$-simplices of the form $(n-i-1,w, e<w_1<\dots<w_k)$ with $w\in W$ and  $0\leq n-i-1< n-\ell_{T}(w_k)$. Then we have the following chain complex which computes the homology of  the pair $(\mathcal{F}_i, \mathcal{F}_{i-1})$: 
\[     \begin{tikzcd}
    0 \arrow[r]& C_{i,n-1-i} \arrow[r,"\partial^0_{i}"] & C_{i-1,n-1-i} \arrow[r,"\partial^0_{i-1}"] & \cdots \arrow[r] & C_{0,n-1-i} \arrow[r] & 0,
  \end{tikzcd}  \]
where the boundary maps $\partial^0_k$  are induced from $\partial_k$ (see \eqref{eq: boumapQ}), that is,
\begin{align*}
  &\partial^0_k(n-1-i, w, e< w_1 < \dots < w_k)\\
=&\sum_{j=1}^k (-1)^j ( n-1-i,w, e< w_1< \dots < \widehat{w_j}<\dots < w_k ).
\end{align*}
 Note that for any $k$-simplex   in $C_{k,n-i-1}$, we have the condition  $n-1-i< n- \ell_{T}(w_k)$, i.e., $\ell_{T}(w_k)\leq i$. Therefore, $ w_1 < \dots < w_k$ is a $(k-1)$-chain in the rank-selected subposet $\mathcal{L}_{[i]}$ of $\mathcal{L}$.

 Denote by $C_{k-1}(\mathcal{L}_{[i]})$ the chain group over $\mathbb{Z}$ spanned by all $(k-1)$-chains in  $\mathcal{L}_{[i]}$. Then we have the $\mathbb{Z}W$-isomorphism
\begin{equation}\label{eq: isofil}
    C_{k, n-i-1} \cong \mathbb{Z}W \otimes C_{k-1}(\mathcal{L}_{[i]}),
\end{equation}
given by mapping the $k$-simplex $(n-1-i, w, e< w_1 < \dots < w_k)$ to the tensor $w\otimes (w_1 < \dots < w_k)$.
It is easily verified that this isomorphism is a chain map.  This gives rise to the $\mathbb{Z}W$-isomorphism of homology groups  
\[ H_{k}(\mathcal{F}_i,\mathcal{F}_{i-1})\cong  \mathbb{Z}W \otimes \widetilde{H}_{k-1}(\mathcal{L}_{[i]}), \quad 0\leq k\leq i.\]
 Since $\mathcal{L}$ is Cohen-Macaulay,  the  rank-selected subposet $\mathcal{L}_{[i]}$ is also Cohen-Macaulay for each $i$ \cite[Theorem 6.4]{Bac80}. Therefore, the relative homology groups $H_k(\mathcal{F}_i,\mathcal{F}_{i-1})$ are nontrivial if and only if $k=i$. In this case, using \thmref{thm: basisrksel} we have $d(\mathcal{B}_{i+1})\cong \widetilde{H}_{i-1}(\mathcal{L}_{[i]})$. Hence we have $H_{i}(\mathcal{F}_i, \mathcal{F}_{i-1}) \cong \mathbb{Z}W \otimes d(\mathcal{B}_{i+1})$ as free $\mathbb{Z}W$-modules. 
\end{proof}

 Recall from \thmref{thm: basisrksel} that the elements $z_{{\bf t}}=d(\beta_{{\bf t}}), {\bf t}\in \mathcal{D}_{[k-1]}$ are a basis for $d(\mathcal{B}_k)\cong \widetilde{H}_{k-2}(\mathcal{L}_{[k-1]})$. In particular, $d(\mathcal{B}_1)=\mathbb{Z}$. We have the following. 

\begin{theorem}\label{thm: fullcom}
 The integral homology of the  Milnor fibre $F_Q=Q^{-1}(1)$ is isomorphic to the homology of the following chain complex
   \[    \begin{tikzcd}
    0 \arrow[r]&  \mathbb{Z}W\otimes d(\mathcal{B}_n) \arrow[r,"\partial_{n-1}"]  & \cdots \arrow[r] & \mathbb{Z}W\otimes  d(\mathcal{B}_2)  \arrow[r,"\partial_1"]& \mathbb{Z}W\otimes d(\mathcal{B}_1) \arrow[r] & 0,
  \end{tikzcd}
   \]
   where the boundary maps  are given by 
   \begin{equation*}
     \partial_{k-1}(w\otimes z_{(t_1,\dots ,t_{k}}) )= \sum_{i=1}^{k} (-1)^{i-1} wt_i \otimes z_{(t_1^{t_i},\dots ,t_{i-1}^{t_i}, t_{i+1},\dots ,t_{k})}
   \end{equation*}
   for any $w\in W$ and  $(t_1,\dots ,t_k)\in \bigcup_{u\in \mathcal{L}_k} {\rm Rex}_{T}(u)$ with $2\leq k\leq n$. 
\end{theorem}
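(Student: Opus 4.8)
The plan is to run the spectral sequence of the filtration \eqref{eq: Ffiltration} and show that it collapses at the $E^2$-page, so that the complex $(E^1_{*,*}, d^1)$ already computes $H_*(\mathcal{F};\mathbb{Z})=H_*(F_Q;\mathbb{Z})$. By \lemref{lem: Homoquo} the $E^1$-page is concentrated on the line $j=0$: we have $E^1_{i,0}=H_i(\mathcal{F}_i,\mathcal{F}_{i-1})\cong \mathbb{Z}W\otimes d(\mathcal{B}_{i+1})$ and $E^1_{i,j}=0$ for $j\neq 0$ (for $j<0$ this is the triviality statement of \lemref{lem: Homoquo}, and for $j>0$ it holds since $C_{i+j}(\mathcal{F}_i)=0$ once $i+j>i$). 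Hence the spectral sequence has a single nonzero row, the differentials $d^r$ vanish for $r\geq 2$ for dimension reasons, and $E^1=E^\infty$; therefore $H_k(\mathcal{F};\mathbb{Z})$ is the homology of the complex $\cdots\to E^1_{k,0}\xrightarrow{d^1} E^1_{k-1,0}\to\cdots$, where $d^1$ is the connecting map $H_k(\mathcal{F}_k,\mathcal{F}_{k-1})\to H_{k-1}(\mathcal{F}_{k-1},\mathcal{F}_{k-2})$ of the triple $(\mathcal{F}_k,\mathcal{F}_{k-1},\mathcal{F}_{k-2})$. Reindexing $i+1=k$ gives the stated complex $0\to\mathbb{Z}W\otimes d(\mathcal{B}_n)\to\cdots\to\mathbb{Z}W\otimes d(\mathcal{B}_1)\to 0$.

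The remaining work is to identify the differential $d^1$ with the formula in the statement. I would compute it on generators: a class in $E^1_{k-1,0}\cong \mathbb{Z}W\otimes d(\mathcal{B}_k)$ is represented, via the isomorphism \eqref{eq: isofil}, by $w\otimes z_{(t_1,\dots,t_k)}$ where $z_{(t_1,\dots,t_k)}=\sum_{i=1}^k(-1)^{k-i}\beta_{{\bf t}(\hat{i})}$ by \eqref{eq: zexp}. Lifting a representing cycle in $C_{k-1}(\mathcal{F}_{k-1})$ to a chain in $C_{k-1}(\mathcal{F}_{k})$, applying the full boundary map $\partial_{k-1}$ of \eqref{eq: boumapQ}, and projecting back to $C_{k-2}(\mathcal{F}_{k-1})/C_{k-2}(\mathcal{F}_{k-2})$ isolates exactly the ``new face'' terms $(m+\ell_T(w_1),ww_1,\dots)$: the face-deletion terms $(m,w,e<w_1<\dots<\widehat{w_j}<\dots)$ all land in $\mathcal{F}_{k-2}$ (since they shorten the chain) and are killed in the quotient. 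Using $\partial_{k-1}=\pm d(\beta_{(t_1,\dots,t_{k-1})})\beta_{t_k}+\beta_{(t_1,\dots,t_{k-1})}$ together with part (2) of \lemref{lem: betasum} — specifically the identity $\beta_{{\bf t}}=\sum_{i=1}^k(-1)^{i-1}(t_i,\beta_{(t_1^{t_i},\dots,t_{i-1}^{t_i},t_{i+1},\dots,t_k)})$ — the ``new face'' of each simplex in $z_{(t_1,\dots,t_k)}$ replaces $w$ by $wt_i$ and conjugates the first block, producing precisely $\sum_{i=1}^k(-1)^{i-1}wt_i\otimes z_{(t_1^{t_i},\dots,t_{i-1}^{t_i},t_{i+1},\dots,t_k)}$. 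I would also record here that $d^1$ is $\mathbb{Z}W$-linear because left multiplication on $w$ commutes with $\partial$, so it suffices to verify the formula for the trivial coset.

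The main obstacle is the bookkeeping in the last step: matching the connecting homomorphism of the triple, computed on a chosen chain-level lift of the cycle $z_{(t_1,\dots,t_k)}$, with the closed-form expression, while keeping track of the sign $(-1)^{k-i}$ inside $z_{{\bf t}}$, the sign $(-1)^{i-1}$ coming from the coset decomposition in \lemref{lem: betasum}(3), and the overall sign produced by $\partial_{k-1}$ acting on a $(k-1)$-simplex. I expect these to combine cleanly, but the verification that the two conjugation patterns agree — the one built into the definition of $\mathcal{D}_{[k-1]}$/$z_{\bf t}$ versus the one forced by the ``new face'' map $ww_1$, $w_1^{-1}w_2<\dots$ — is where care is needed, and it is exactly the content guaranteed by \lemref{lem: betasum}(3). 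Once this identification is in place, together with the collapse of the spectral sequence, the theorem follows. A parallel remark is that $d^2=0$ here is automatic from the spectral sequence, but one may cross-check it against \lemref{lem: dprop}(1) applied to the $d(\mathcal{B}_k)$, which gives independent confirmation that the displayed sequence is indeed a chain complex.
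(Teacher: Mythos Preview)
Your overall approach is the same as the paper's: run the spectral sequence of the filtration \eqref{eq: Ffiltration}, use \lemref{lem: Homoquo} to see the $E^1$-page is concentrated on the row $j=0$, and identify $d^1$ via \lemref{lem: betasum}(3). The paper packages the last step slightly differently, first observing that the induced map on $\mathbb{Z}W\otimes C_{i-1}(\mathcal{L}_{[i]})$ is simply $w\otimes(t_1,\dots,t_i)\mapsto wt_1\otimes(t_2,\dots,t_i)$ and then checking the commutation $\partial\circ(1\otimes d)=(1\otimes d)\circ\partial$ to transport this to $\mathbb{Z}W\otimes d(\mathcal{B}_{i+1})$; but this is the same computation you describe.

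There is one genuine slip in your bookkeeping. The filtration index of a simplex $(m,w,e<w_1<\dots<w_k)$ is controlled by the integer $m$, not by the length of the chain: by the proof of \lemref{lem: Homoquo}, the simplices in $\mathcal{F}_i\setminus\mathcal{F}_{i-1}$ are exactly those with $m=n-i-1$. Hence the face-deletion terms, which keep $m=n-k$ fixed, do \emph{not} land in $\mathcal{F}_{k-2}$; each one stays in $\mathcal{F}_{k-1}\setminus\mathcal{F}_{k-2}$. The correct reason they disappear from $d^1$ is that $z_{(t_1,\dots,t_k)}$ is a cycle for the quotient boundary $\partial^0$ (this is precisely what $z_{\bf t}\in\widetilde{H}_{k-2}(\mathcal{L}_{[k-1]})$ means), so these terms cancel to zero before you ever pass to a further quotient. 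It is only the ``new face'' term---which raises $m$ by $\ell_T(w_1)=1$---that drops into $\mathcal{F}_{k-2}\setminus\mathcal{F}_{k-3}$ and therefore gives the connecting map. (Relatedly, no lift to $C_{k-1}(\mathcal{F}_k)$ is needed: the $d^1$ differential is the connecting map of the triple $(\mathcal{F}_{k-1},\mathcal{F}_{k-2},\mathcal{F}_{k-3})$, i.e.\ $[z]\mapsto[\partial z]$, and the target quotient is $C_{k-2}(\mathcal{F}_{k-2})/C_{k-2}(\mathcal{F}_{k-3})$, not $C_{k-2}(\mathcal{F}_{k-1})/C_{k-2}(\mathcal{F}_{k-2})$.) With this correction your use of \lemref{lem: betasum}(3) goes through and yields the stated formula.
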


\begin{proof}
 We  use the spectral sequence associated to the filtration \eqref{eq: Ffiltration}. By \lemref{lem: Homoquo}, we have the $E^1$-page given by
 \[
  E^{1}_{i,j}=H_{i+j}(\mathcal{F}_i, \mathcal{F}_{i-1})\cong
\begin{cases}
   \mathbb{Z}W \otimes \widetilde{H}_{i-1}(\mathcal{L}_{[i]}), & \text{if\,} j=0, \\
   0, & \text{otherwise.}
\end{cases}
 \]
 Therefore, the  $E^1$-page is concentrated in a single row such that $E^1_{i,0}\cong \mathbb{Z}W \otimes \widetilde{H}_{i-1}(\mathcal{L}_{[i]})\cong \mathbb{Z}W\otimes  d(\mathcal{B}_{i+1})$.  It remains to determine the nonzero boundary maps on the $E^1$-page.

We first look at the boundary map on $\mathbb{Z}W \otimes \widetilde{H}_{i-1}(\mathcal{L}_{[i]})$ induced from  \eqref{eq: boumapQ}, and then translate it to $\mathbb{Z}W\otimes  d(\mathcal{B}_{i+1})$.  It follows from  definition \eqref{eq: boumapQ} and the isomorphism \eqnref{eq: isofil} that the boundary map $E^1_{i,0}\rightarrow E^{1}_{i-1,0}$ is given by the restriction of the linear map $\partial_{i}: \mathbb{Z}W \otimes C_{i-1}(\mathcal{L}_{[i]})\rightarrow  \mathbb{Z}W\otimes C_{i-2}(\mathcal{L}_{[i-1]} )$ defined by 
 \[ \partial_{i}(w\otimes (w_1< w_2 < \dots < w_i))= ww_1\otimes  (w_1^{-1}w_2< w_1^{-1}w_3< \dots < w_1^{-1} w_i) \]
  for any $w\in W$ and $w_j\in \mathcal{L}_{[i]}$ with $\ell_{T}(w_j)=j$ for $1\leq j\leq i$. For convenience, we identify the chain $w_1< w_2 < \dots < w_i$ with the sequence of reflections $(t_1,t_2, \dots,t_i)$, where $t_{j}=w_{j-1}^{-1}w_j$ for $1\leq j\leq i$. Then the above linear map can be written as 
\begin{equation}\label{eq: indboundmap}
   \partial_{i}(w\otimes (t_1, t_2, \dots, t_i))= wt_1\otimes  (t_2, \dots, t_i),
\end{equation}
which gives rise to the linear map $\partial_{i}: \mathbb{Z}W\otimes \mathcal{B}_i\rightarrow  \mathbb{Z}W\otimes \mathcal{B}_{i-1}$ for each $i$. 

Now we can restrict the above linear maps to $\mathbb{Z}W\otimes  d(\mathcal{B}_{i+1})$, which give the boundary maps of the chain complex  in the theorem. Recalling that $z_{(t_1, \dots,t_{i+1})}=d(\beta_{(t_1, \dots, t_{i+1})})\in  d(\mathcal{B}_{i+1})$, we have 
\begin{equation}\label{eq: Deltamap}
  \partial (w\otimes z_{(t_1,\dots ,t_{i+1}})) = (\partial \circ (1\otimes d)) (w\otimes \beta_{(t_1,\dots, t_{i+1})} ).
\end{equation}
 Using the definition \eqref{eq: theta} of  $d$, one can check that 
\[ \partial \circ (1\otimes d)= (1\otimes d)\circ \partial.  \]
Substituting this into the right hand side of \eqref{eq: Deltamap}, we have 
\begin{align*}
\partial (w\otimes z_{(t_1,\dots ,t_{i+1}})) &= (1\otimes d)\circ \partial( w\otimes \beta_{(t_1,\dots, t_{i+1})} ) \\
&= (1\otimes d)\circ \partial \bigg( w\otimes \sum_{j=1}^{i+1}(-1)^{j-1} (t_i, \beta_{( t_1^{t_j},\dots, t_{j-1}^{t_j},\, t_{j+1},\dots,t_{i+1})})  \bigg)\\
&= \sum_{j=1}^{i+1}(-1)^{j-1} (1\otimes d)( wt_i \otimes \beta_{( t_1^{t_j},\dots, t_{j-1}^{t_j},\, t_{j+1},\dots,t_{i+1})} )
\\
&= \sum_{j=1}^{i+1} (-1)^{j-1} wt_i \otimes z_{(t_1^{t_j},\dots ,t_{j-1}^{t_j}, t_{j+1},\dots ,t_{i+1})},
\end{align*}
where  the second equation follows from part (3) of \lemref{lem: betasum}, and the third equation  from \eqnref{eq: indboundmap}. This finishes the proof. 
\end{proof}

\begin{remark}
  The Coxeter group $W$ acts on the chain complex in \thmref{thm: fullcom} by left multiplication on the tensor factor $\mathbb{Z}W$,  and the monodromy action can be partly described as follows.  We may define  the cyclic group $\langle \gamma \rangle$ action on $\mathbb{Z}W\otimes \mathcal{B}_{k}$ by 
\begin{equation*}\label{eq: CoxactWA}
  \gamma. (w\otimes \beta_{t_1}\beta_{t_2}\cdots \beta_{t_k})= w\gamma \otimes \beta_{\gamma^{-1}t_1 \gamma}\beta_{\gamma^{-1}t_2 \gamma}\cdots \beta_{\gamma^{-1}t_k \gamma}, \quad w\in W, t_i\in T.
\end{equation*}
 This action commutes with the map $1\otimes d$. Hence the chain groups $\mathbb{Z}W\otimes d(\mathcal{B}_k)$ are all  $\langle \gamma \rangle$-modules. Straightforward calculation shows that the $\gamma$ action also commutes with the boundary map  of the chain complex given in \thmref{thm: fullcom}, and hence the homology groups $H_k(F_Q;\mathbb{Z})$ are $\langle \gamma \rangle$-modules.  Actually, the action of the Coxeter element $\gamma$ appears to be   the $n$-th power of the monodromy action on  the NCP model of $F_{Q}$. 
\end{remark}

Now we can derive a chain complex for $F_{Q_0}$ from that for $F_{Q}$. Denote by $\mathbb{Z}W_+$ (resp. $\mathbb{Z}W_{-}$) the free abelian subgroup of $\mathbb{Z}W$ spanned by even (resp. odd) elements of $W$. Let $\mathfrak{p}(k)$ be the parity function defined by $\mathfrak{p}(k)=+$ if $k$ is even and  $\mathfrak{p}(k)=-$ otherwise.

\begin{theorem}\label{thm: redfib}
 The integral homology of the  Milnor fibre $F_{Q_0}=Q_0^{-1}(1)$ is isomorphic to the homology of the following chain complex
   \[    \begin{tikzcd}
      0\longrightarrow \mathbb{Z}W_{\mathfrak{p}(n)}\otimes d(\mathcal{B}_n) \arrow[r,"\partial_{n-1}"]  & \cdots \arrow[r] & \mathbb{Z}W_{\mathfrak{p}(2)}\otimes  d(\mathcal{B}_2)  \arrow[r,"\partial_1"]& \mathbb{Z}W_{\mathfrak{p}(1)}\otimes d(\mathcal{B}_1)\longrightarrow 0,
  \end{tikzcd}
   \]
  where the boundary maps  are the same as those in \thmref{thm: fullcom}.
\end{theorem}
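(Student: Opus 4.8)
The plan is to decompose the chain complex of \thmref{thm: fullcom} as a direct sum of two subcomplexes, one of which is the complex in the statement. The starting point is that $F_Q=Q^{-1}(1)$ is the disjoint union of two connected components $F_{Q_0}$ and $Q_0^{-1}(-1)$, both homeomorphic to $F_{Q_0}$ \cite{DL16}, so every path component of $F_Q$ has the integral homology of $F_{Q_0}$.

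First I would show that the NCP model $\mathcal{F}$ is likewise disconnected. For a $k$-simplex $\sigma=(m,w,e<w_1<\dots<w_k)$ of $\mathcal{F}$, put $\epsilon(\sigma):=\det(w)\,(-1)^{n+m}\in\{\pm 1\}$. Inspecting the maximal faces of $\sigma$ in \defref{def: ncpmodfull}: the faces obtained by deleting some $w_i$ change neither $w$ nor $m$, while the remaining face replaces $w$ by $ww_1$ and $m$ by $m+\ell_T(w_1)$, and since $\det(ww_1)=\det(w)\,(-1)^{\ell_T(w_1)}$ the value of $\epsilon$ is unchanged. Thus $\epsilon$ is constant on each closed simplex, hence defines a locally constant function on $|\mathcal{F}|$. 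As $\mathcal{F}$ has the homotopy type of $F_Q$, which has exactly two path components, $\mathcal{F}=\mathcal{F}^+\sqcup\mathcal{F}^-$, where $\mathcal{F}^{\pm}$ is the subcomplex of simplices with $\epsilon=\pm 1$. (Left multiplication by a simple reflection is a simplicial automorphism of $\mathcal{F}$ sending $(m,w,\cdots)$ to $(m,sw,\cdots)$, so it interchanges $\mathcal{F}^+$ and $\mathcal{F}^-$; in particular these two subcomplexes are isomorphic.) Since a homotopy equivalence restricts to homotopy equivalences on path components, $\mathcal{F}^+$ is homotopy equivalent to a component of $F_Q$, whence $H_*(\mathcal{F}^+;\mathbb{Z})\cong H_*(F_{Q_0};\mathbb{Z})$.

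Next I would carry the splitting $C_*(\mathcal{F})=C_*(\mathcal{F}^+)\oplus C_*(\mathcal{F}^-)$ through the filtration \eqref{eq: Ffiltration} and its spectral sequence from the proof of \thmref{thm: fullcom}. Since membership of a simplex in $\mathcal{F}_i$ and its $\epsilon$-value are conditions imposed independently on each simplex, the filtered complex, and hence the whole spectral sequence, breaks up as the direct sum of the corresponding filtered subcomplexes of $\mathcal{F}^+$ and of $\mathcal{F}^-$; each summand still collapses on its $E^1$-page (the vanishing $H_k(\mathcal{F}_i,\mathcal{F}_{i-1})=0$ for $k\neq i$ passes to direct summands) and converges to $H_*(\mathcal{F}^+)$, respectively $H_*(\mathcal{F}^-)$. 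Now the term $E^1_{i,0}\cong\mathbb{Z}W\otimes d(\mathcal{B}_{i+1})$ of \lemref{lem: Homoquo} is generated by the $i$-simplices $(n-i-1,w,e<w_1<\dots<w_i)$, and such a simplex lies in $\mathcal{F}^+$ exactly when $\det(w)\,(-1)^{n+(n-i-1)}=1$, i.e. $\det(w)=(-1)^{i+1}$. Under the isomorphism $(n-i-1,w,e<w_1<\dots<w_i)\mapsto w\otimes(w_1<\dots<w_i)$ of \lemref{lem: Homoquo} this condition picks out precisely the summand $\mathbb{Z}W_{\mathfrak{p}(i+1)}\otimes d(\mathcal{B}_{i+1})$. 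Writing $k=i+1$, the $\mathcal{F}^+$-summand of the $(k-1)$-st term $\mathbb{Z}W\otimes d(\mathcal{B}_k)$ of the complex of \thmref{thm: fullcom} is therefore $\mathbb{Z}W_{\mathfrak{p}(k)}\otimes d(\mathcal{B}_k)$.

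Finally I would check that the boundary maps restrict compatibly. The $E^1$-differential computed in the proof of \thmref{thm: fullcom} sends $w\otimes z_{(t_1,\dots,t_k)}$ to $\sum_{i=1}^k(-1)^{i-1}\,wt_i\otimes z_{(t_1^{t_i},\dots,t_{i-1}^{t_i},t_{i+1},\dots,t_k)}$, and since each $t_i\in T$ is a reflection, right multiplication by $t_i$ carries the spanning set of $\mathbb{Z}W_{\mathfrak{p}(k)}$ to that of $\mathbb{Z}W_{-\mathfrak{p}(k)}=\mathbb{Z}W_{\mathfrak{p}(k-1)}$. Hence $\bigoplus_k\mathbb{Z}W_{\mathfrak{p}(k)}\otimes d(\mathcal{B}_k)$ (and, complementarily, $\bigoplus_k\mathbb{Z}W_{-\mathfrak{p}(k)}\otimes d(\mathcal{B}_k)$) is a subcomplex of the complex in \thmref{thm: fullcom} with boundary the restriction of the maps there; identifying the first with the subcomplex computing $H_*(\mathcal{F}^+)\cong H_*(F_{Q_0})$ gives the theorem, and the asserted formula for the boundary maps is inherited verbatim. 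I expect the main difficulty to lie in the parity bookkeeping: confirming that $\epsilon$ is genuinely locally constant on $|\mathcal{F}|$, that the filtration and its spectral sequence really split as direct sums along $\mathcal{F}=\mathcal{F}^+\sqcup\mathcal{F}^-$, and that the component matched with $F_{Q_0}$ produces the parity $\mathfrak{p}(k)$ exactly as written rather than its opposite.
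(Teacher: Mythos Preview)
Your approach is correct and is essentially the same as the paper's, only more explicit. The paper simply observes that $\mathbb{Z}W=\mathbb{Z}W_+\oplus\mathbb{Z}W_-$, that right multiplication by a reflection swaps the two summands, and hence that the chain complex of \thmref{thm: fullcom} splits into two isomorphic subcomplexes; since $F_Q=Q_0^{-1}(1)\sqcup Q_0^{-1}(-1)$ with the two pieces homeomorphic, either subcomplex computes $H_*(F_{Q_0};\mathbb{Z})$, and one may simply \emph{choose} the one with chain groups $\mathbb{Z}W_{\mathfrak{p}(k)}\otimes d(\mathcal{B}_k)$.

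What you add is the explicit simplicial parity function $\epsilon$ on $\mathcal{F}$ and the verification that the filtration and spectral sequence decompose along it; this is a more careful version of the same argument rather than a different route. Note that your concern about matching the correct parity to $F_{Q_0}$ (rather than to $Q_0^{-1}(-1)$) is sidestepped in the paper exactly as you would expect: since the two subcomplexes are isomorphic (via left multiplication by any reflection) and the two components of $F_Q$ are homeomorphic, the choice is immaterial, and no further bookkeeping is needed.
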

\begin{proof}
    We have $\mathbb{Z}W= \mathbb{Z}W_+\oplus \mathbb{Z}W_{-}$ as a  $\mathbb{Z}W_{+}$-module. Note that  if $w\in \mathbb{Z}W_{+}$ (resp. $w\in \mathbb{Z}W_-$) then $wt\in \mathbb{Z}W_-$ (resp. $w\in \mathbb{Z}W_+$) for any $t\in T$. Therefore, the chain complex in \thmref{thm: fullcom} splits into two isomorphic  chain complexes, which produce isomorphic homology groups. In particular, one of them has the chain groups $\mathbb{Z}W_{\mathfrak{p}(i)} \otimes d(\mathcal{B}_i)$ for $1\leq i\leq n$. This splitting  is  in accord with the disjoint union  $F_Q=Q_0^{-1}(1)\cup Q_0^{-1}(-1)$ of two homeomorphic connected components, which also have isomorphic homology groups.  By the above arguments, either of the two isomorphic chain complexes computes  the homology of $F_{Q_0}=Q_{0}^{-1}(1)$,   and we may choose the chain complex $(\mathbb{Z}W_{\mathfrak{p}(i)} \otimes d(\mathcal{B}_i), \partial_i)$.
\end{proof}

\begin{example}\label{exam: dihcon}(Dihedral group)
	Let $W$ be the dihedral group as considered in \exref{exam: dih}. We use the chain complex in \thmref{thm: redfib} to compute the homology of the corresponding Milnor fibre $F_{Q_0}$.
	
	Recall from  \exref{exam: dih} that $W$ has $m$ reflections $t_i=s_1(s_2s_1)^{i-1}, 1\leq i\leq m$, and $\mathcal{L}$  has  $m-1$ decreasing maximal chains which are labelled by elements of the set
	$ \mathcal{D}_{[1]}=\{(t_i,t_{i-1} )\mid 2\leq i\leq m \}$. By \thmref{thm: basisrksel}, the free abelian group $d(\mathcal{B}_2)=\widetilde{H}_{0}(\mathcal{L}_{[1]})$ has a basis comprising elements $z_{(t_i,t_{i-1})}$ for all  $(t_i,t_{i-1})\in \mathcal{D}_{[1]}$, and $d(\mathcal{B}_1)=\mathbb{Z}$. Note that $\mathbb{Z}W_+$ is spanned by all rotations $r_i:=(s_2s_1)^{i-1}, 1\leq i\leq m$, while $\mathbb{Z}W_-$ is spanned by all reflections $t_i, 1\leq i\leq m$. We have the following chain complex:
	  \[ \begin{tikzcd}
		0\arrow[r] & \mathbb{Z}W_{+}\otimes  d(\mathcal{B}_2)   \arrow[r,"\partial_1"]& \mathbb{Z}W_{-}\otimes d(\mathcal{B}_1) \arrow[r] & 0,
	\end{tikzcd} \]
	where the  boundary map $\partial_1$ is defined by
	\[ \partial_1( r_i \otimes z_{(t_j,t_{j-1})} )= r_i t_j\otimes 1 - r_i t_{j-1}\otimes 1, \]
	for $1\leq i\leq m$ and $2\leq j \leq  m$. If $r_i=1$, then the elements $ t_j- t_{j-1}, 2\leq j \leq m$ form a basis for the following  free abelian subgroup $\mathbb{Z}\widebar{W}_-$ of $\mathbb{Z}W_{-}$:
	\[ \mathbb{Z}\widebar{W}_{-}=\big\{ \sum_{i=1}^m \lambda_i t_i \mid \sum_{i=1}^m \lambda_i=0, \lambda_i\in \mathbb{Z} \big\}\cong \mathbb{Z}^{m-1}.  \]
	For any other $r_i$, we have  $r_it_j-r_it_{j-1}\in \mathbb{Z}\widebar{W}_-$ for  $2\leq j\leq m$. Therefore, ${\rm Im}\partial_1=\mathbb{Z}\widebar{W}_-\otimes \mathbb{Z}$. It follows that 
	  \[ H_0(F_{Q_0}; \mathbb{Z}) \cong \mathbb{Z}, \quad H_1(F_{Q_0}; \mathbb{Z})\cong \mathbb{Z}^{(m-1)^2}. \]
\end{example}

\subsection{The Milnor fibre of the discriminant of \texorpdfstring{$W$}{W}}
We refer to \cite{LT09} for the invariant theory for $W$. Recall that $W$ has a set of basic invariants $\{f_1, \dots, f_n\}$ of the $W$-invariant polynomial ring $\mathbb{C}[x_1, \dots, x_n]^W$.  Since $Q$ is $W$-invariant, there exists a unique polynomial $P(y_1, \dots, y_n)$ such that $Q=P(f_1, \dots, f_n)$. The polynomial $P$ is called the discriminant of $W$. The Milnor fibre $F_P:=P^{-1}(1)$ can be realised as the quotient $F_{Q}/W$. 

The NCP model of $F_P$ is also introduced in \cite{BFW18}. It is defined to be the $(n-1)$-dimensional CW-complex $\mathcal{F}^W$ whose $k$-cells are of the form 
\[
(m, e<w_1<\dots <w_k) \quad \text{with} \quad 0\leq m< n-\ell_{T}(w_k),
\]
where $e<w_1<\dots <w_k$ is a chain of $\mathcal{L}$. Note that $\mathcal{F}^W$ is not a simplicial complex, and each cell of $\mathcal{F}^W$ can be  obtained from a simplex of $\mathcal{F}$ by removing the group element $w$ in the middle of the triple (cf. \defref{def: ncpmodfull}).  

The homology groups of  $F_P$ can be calculated by using the NCP model $\mathcal{F}^W$. The boundary maps of $\mathcal{F}^W$ are  given by
\[
  \begin{aligned}
\partial_k(m, e< w_1 < \dots < w_k)= &(m+ \ell_T(w_1),  e < w_1^{-1}w_2< \dots < w_1^{-1}w_k) \\
 & +\sum_{i=1}^k (-1)^i ( m, e< w_1< \dots < \widehat{w_i}<\dots < w_k ).
\end{aligned}
\]
Since $F_P$ and  $\mathcal{F}^W$ have the same homotopy type, we obtain \[H_k(F_P;\mathbb{Z})\cong H_k(\mathcal{F}^W;\mathbb{Z}):= {\rm Ker}\, \partial_k/ {\rm Im}\, \partial_{k+1}.\]

We can introduce a filtration of $\mathcal{F}^W$ similar to \eqnref{eq: Ffiltration}. Precisely, the $i$-dimensional subcomplex $\mathcal{F}^W_i$ is defined to be the union of  maximal $i$-cells of the  form $(n-i-1, e<w_1<\dots< w_i)$ together with their faces, where  $w_1<\dots< w_i$ is a maximal chain of $\mathcal{L}_{[i]}$. Now we have the following lemma.

\begin{lemma}
  For each $i=0,\dots, n-1$, we have the following isomorphisms of free abelian groups:
    \[ H_{i}(\mathcal{F}^W_i, \mathcal{F}^W_{i-1})\cong  \widetilde{H}_{i-1}(\mathcal{L}_{[i]}) \cong  d(\mathcal{B}_{i+1}),
    \]
and for each $k<i$ the homology group $H_{k}(\mathcal{F}^W_i, \mathcal{F}^W_{i-1})$ is trivial.
\end{lemma}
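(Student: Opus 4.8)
The plan is to mimic the proof of \lemref{lem: Homoquo} verbatim, since the model $\mathcal{F}^W$ is obtained from $\mathcal{F}$ by deleting the group element $w$ from each simplex, and the filtration $\mathcal{F}^W_i$ is defined exactly as $\mathcal{F}_i$ with this deletion. Concretely, first I would identify the relative chain group $C_k(\mathcal{F}^W_i, \mathcal{F}^W_{i-1}) = C_k(\mathcal{F}^W_i)/C_k(\mathcal{F}^W_{i-1})$ with the free abelian group $C_{k-1}(\mathcal{L}_{[i]})$ spanned by the $(k-1)$-chains $w_1 < \dots < w_k$ of the rank-selected subposet $\mathcal{L}_{[i]}$, via the map sending the cell $(n-1-i, e < w_1 < \dots < w_k)$ to the chain $(w_1 < \dots < w_k)$. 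Here the constraint $n-1-i < n - \ell_T(w_k)$ forces $\ell_T(w_k) \le i$, which is precisely the condition that the chain lies in $\mathcal{L}_{[i]}$.

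Next I would check that under this identification the induced boundary map $\partial^0_k$ on the associated graded of the filtration is the usual simplicial boundary operator on $\Delta(\mathcal{L}_{[i]})$ (with the $w_1$-conjugation term dropped, since it raises the filtration degree). This is the same computation as in \lemref{lem: Homoquo} but simpler, as there is no $\mathbb{Z}W$-factor to carry along and no left-multiplication to track. Consequently the homology of the relative chain complex is the reduced simplicial homology of the order complex, i.e. $H_k(\mathcal{F}^W_i, \mathcal{F}^W_{i-1}) \cong \widetilde{H}_{k-1}(\mathcal{L}_{[i]})$ for $0 \le k \le i$. Invoking that $\mathcal{L}$ is Cohen-Macaulay, hence so is $\mathcal{L}_{[i]}$ by \cite[Theorem 6.4]{Bac80}, the reduced homology vanishes except in top degree $k = i$, which gives the vanishing claim for $k < i$. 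Finally, for $k = i$, \thmref{thm: basisrksel} gives $d(\mathcal{B}_{i+1}) \cong \widetilde{H}_{i-1}(\mathcal{L}_{[i]})$, yielding the stated isomorphisms $H_i(\mathcal{F}^W_i, \mathcal{F}^W_{i-1}) \cong \widetilde{H}_{i-1}(\mathcal{L}_{[i]}) \cong d(\mathcal{B}_{i+1})$.

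Since there is genuinely no obstacle here beyond bookkeeping — the argument is strictly easier than \lemref{lem: Homoquo} — the only point requiring minor care is the base case $i = 0$: then $\mathcal{F}^W_0$ consists of the $0$-cells $(n-1, e)$ together with $(n-1, w_1)$ for $w_1 \in \mathcal{L}_1$, $\mathcal{F}^W_{-1} = \emptyset$, and one should confirm that $H_0(\mathcal{F}^W_0, \emptyset) \cong \widetilde{H}_{-1}(\mathcal{L}_{[0]}) \oplus (\text{reduced part})$; in fact with the reduced convention $\widetilde{H}_{-1}(\mathcal{L}_{[0]}) = \widetilde{H}_{-1}(\emptyset) = \mathbb{Z}$ and $d(\mathcal{B}_1) = \mathbb{Z}$, so the formula holds. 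I would then remark that the proof is identical to that of \lemref{lem: Homoquo} after discarding the tensor factor $\mathbb{Z}W$, and leave the routine verification of the chain-map property to the reader.
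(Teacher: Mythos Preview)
Your proposal is correct and takes essentially the same approach as the paper, which simply states that the proof is analogous to that of \lemref{lem: Homoquo}; you have faithfully carried out that analogy by dropping the $\mathbb{Z}W$ tensor factor throughout. One minor slip: in the base case $i=0$, the subcomplex $\mathcal{F}^W_0$ consists of the single $0$-cell $(n-1,e)$ only---there are no cells of the form $(n-1,w_1)$, since a $0$-cell is just $(m,e)$---but your conclusion $H_0(\mathcal{F}^W_0,\emptyset)\cong\mathbb{Z}\cong\widetilde{H}_{-1}(\emptyset)\cong d(\mathcal{B}_1)$ is correct.
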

\begin{proof}
The proof is analogous to that of \lemref{lem: Homoquo}.
\end{proof}

The following theorem is an improvement of \cite[Theorem 6.4]{BFW18}, which does not give an explicit description of  the chain groups and boundary maps.

\begin{theorem}\label{thm: disfibre}  
  The integral homology of $F_P$ is isomorphic to the homology of the following chain complex of free abelian groups:
   \begin{equation*}
  \xymatrix{
   0  \ar[r] & d(\mathcal{B}_n)  \ar[r]^-{\partial _{n-1}}  &  d(\mathcal{B}_{n-1}) \ar[r]& \cdots  \ar[r] &  d(\mathcal{B}_2) \ar[r]^-{\partial _1} & d(\mathcal{B}_1) \ar[r]&  0,
}
\end{equation*}
 where the boundary maps are given by 
 \begin{equation}\label{eq: bounddis}
   \partial _{k-1}(z_{(t_1,\dots ,t_{k}}))= \sum_{i=1}^{k} (-1)^{i-1} z_{(t_1^{t_i},\dots ,t_{i-1}^{t_i}, t_{i+1},\dots ,t_{k})}
 \end{equation}
for any $(t_1,\dots ,t_k)\in \bigcup_{w\in \mathcal{L}_k} {\rm Rex}_{T}(w)$ and $2\leq k\leq n$. In particular, $\partial_1=0$.
\end{theorem}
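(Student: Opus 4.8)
The plan is to run the spectral sequence of the filtration $\mathcal{F}^W = \mathcal{F}^W_{n-1} \supset \cdots \supset \mathcal{F}^W_0 \supset \mathcal{F}^W_{-1} = \emptyset$, exactly paralleling the proof of \thmref{thm: fullcom}, but now without the tensor factor $\mathbb{Z}W$. By the lemma immediately preceding the statement, the $E^1$-page is concentrated in a single row: $E^1_{i,0} \cong \widetilde{H}_{i-1}(\mathcal{L}_{[i]}) \cong d(\mathcal{B}_{i+1})$ and $E^1_{i,j} = 0$ for $j \neq 0$. Since the $E^1$-page lives in one row, the spectral sequence degenerates at $E^2$, and the homology of $\mathcal{F}^W$ (hence of $F_P$) is computed by the single-row complex $(E^1_{i,0}, d^1)$, which is $(d(\mathcal{B}_{i+1}), \partial)$ up to reindexing. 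So the only real content is identifying the $d^1$ differential.

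First I would unwind the $d^1$ differential on the $E^1$-page. As in \thmref{thm: fullcom}, the boundary map $E^1_{i,0} \to E^1_{i-1,0}$ is induced by the full boundary map $\partial_k$ of $\mathcal{F}^W$; working in the associated graded, only the ``first'' maximal-face term $(m+\ell_T(w_1), e < w_1^{-1}w_2 < \cdots < w_1^{-1}w_k)$ survives (the terms deleting an interior $w_j$ either stay inside $\mathcal{F}^W_{i-1}$ or, more precisely, are the internal differential already killed in passing to $E^1$). Identifying a chain $w_1 < \cdots < w_i$ of $\mathcal{L}_{[i]}$ with its label sequence $(t_1,\dots,t_i)$ via $t_j = w_{j-1}^{-1}w_j$, this gives the linear map $\partial_i(t_1,\dots,t_i) = (t_2,\dots,t_i)$ on $C_{i-1}(\mathcal{L}_{[i]})$, hence a map $\partial_i : \mathcal{B}_i \to \mathcal{B}_{i-1}$ — this is the analogue of \eqref{eq: indboundmap} with the $\mathbb{Z}W$ factor stripped off. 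Then I restrict to $d(\mathcal{B}_{i+1}) \subseteq \mathcal{B}_i$.

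Next I would compute $\partial$ on the basis elements $z_{(t_1,\dots,t_{k})} = d(\beta_{(t_1,\dots,t_k)})$ exactly as in \thmref{thm: fullcom}. Using part (3) of \lemref{lem: betasum} to write $\beta_{(t_1,\dots,t_k)} = \sum_{i=1}^{k}(-1)^{i-1}(t_i,\, \beta_{(t_1^{t_i},\dots,t_{i-1}^{t_i},t_{i+1},\dots,t_k)})$, and the fact (checked directly from the definition \eqref{eq: theta} of $d$, as in the proof of \thmref{thm: fullcom}) that $\partial \circ (1 \text{ on } d) $ commutes appropriately, I get
\[
\partial_{k-1}(z_{(t_1,\dots,t_k)}) = \sum_{i=1}^{k}(-1)^{i-1} z_{(t_1^{t_i},\dots,t_{i-1}^{t_i},t_{i+1},\dots,t_k)},
\]
which is \eqref{eq: bounddis}. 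The case $k=1$ gives $\partial_1 = 0$ since $z_t = d(\beta_t) = 1$ for all $t \in T$ and the sum has a single term $z_{\emptyset} = 1$ mapped identically — more precisely $d(\mathcal{B}_1) = \mathbb{Z}$ and there is nothing below it, so $\partial_1 = 0$. Finally I note that the differentials here are precisely the restriction to $\mathbb{Z}\otimes$ of those in \thmref{thm: fullcom}, i.e. the $W$-coinvariants, which is consistent with $F_P = F_Q/W$.

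\textbf{Main obstacle.} The only delicate point is justifying that in the associated-graded complex on the $E^1$-page the interior-deletion terms of $\partial_k$ really vanish, so that $\partial$ reduces to the single term in \eqref{eq: indboundmap}; this requires checking that the isomorphism $C_{k,n-i-1} \cong C_{k-1}(\mathcal{L}_{[i]})$ of the preceding lemma is a chain map with respect to the induced differential, which is where the bookkeeping with the shift in the integer $m$ and the condition $\ell_T(w_k) \le i$ has to be handled carefully — but this is identical to the corresponding step in \lemref{lem: Homoquo} and \thmref{thm: fullcom}, so it is routine rather than genuinely hard. Everything else is a transcription of the $F_Q$ argument with $\mathbb{Z}W$ replaced by $\mathbb{Z}$.
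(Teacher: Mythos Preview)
Your approach is correct and is essentially identical to the paper's own proof, which simply says ``the proof is similar to that of \thmref{thm: fullcom}'' and then verifies $\partial_1=0$. Your identification of the $E^1$-page via the preceding lemma, the reduction of the induced differential to the single ``first-face'' term as in \eqref{eq: indboundmap}, and the use of part (3) of \lemref{lem: betasum} to compute $\partial$ on $z_{(t_1,\dots,t_k)}$ all match the paper exactly.

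One small correction: your justification of $\partial_1=0$ is garbled. The map $\partial_1$ goes from $d(\mathcal{B}_2)$ to $d(\mathcal{B}_1)$, so it corresponds to $k=2$ in formula \eqref{eq: bounddis}, not $k=1$; the sum has two terms, not one. The correct computation (which the paper spells out) is $\partial_1(z_{(t_1,t_2)}) = z_{(t_2)} - z_{(t_1^{t_2})} = z_{t_2} - z_{t_2t_1t_2} = 1 - 1 = 0$, using $z_t = d(\beta_t) = 1$ for every $t\in T$. Your alternative remark that ``there is nothing below $d(\mathcal{B}_1)$'' does not establish this, since $\partial_1$ maps \emph{into} $d(\mathcal{B}_1)$.
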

\begin{proof}
The proof is similar to that of \thmref{thm: fullcom}. Note that in particular $\partial _1=0$, since $\partial_1(z_{(t_1,t_2)})=z_{t_2}-z_{t_2t_1t_2}$ and $z_{t}=d(\beta_t)=1$ for any $t\in T$. 
\end{proof}

\begin{remark}
 Recall from \thmref{thm: basisrksel} that the elements $z_{{\bf t}}, {\bf t}\in \mathcal{D}_{[k-1]}$ are a basis for $d(\mathcal{B}_k)= \widetilde{H}_{k-2}(\mathcal{L}_{[k-1]})$. On the right hand side of equation \eqnref{eq: bounddis}, the elements $z_{(t_1^{t_i},\dots ,t_{i-1}^{t_i}, t_{i+1},\dots ,t_{k})}, 1\leq i\leq k$ are not $\mathbb{Z}$-linearly independent, and hence they are generally not basis elements of $d(\mathcal{B}_{k-1})$. 
\end{remark}

\begin{example}(Dihedral group) Let $W$ be a dihedral group. Recall from \exref{exam: dihcon} that $d(\mathcal{B}_2)\cong \mathbb{Z}^{m-1}$ and $d(\mathcal{B}_1)\cong \mathbb{Z}$. Note that we have  $\partial_1=0$ in the chain complex given in \thmref{thm: disfibre}. Therefore, the corresponding Milnor fibre $F_P$ has the homology groups $H_1(F_P;\mathbb{Z})\cong \mathbb{Z}^{m-1}$ and $H_0(F_P;\mathbb{Z})\cong \mathbb{Z}$.
\end{example}
\begin{example} (Type $A_3$)
 We continue  \exref{exam: A3con}, where $W={\rm Sym}_4$ and $\gamma=(1243)$.
	Then the chain complex given in \thmref{thm: disfibre} is 
	\[
		\xymatrix{
			0  \ar[r] & d(\mathcal{B}_3)  \ar[r]^-{\partial _{2}}  &  d(\mathcal{B}_2) \ar[r]^-{\partial _1} & d(\mathcal{B}_1) \ar[r]&  0,
		}
	\]
	where $\partial_1=0$ and for any $(t_1,t_2,t_3)\in {\rm Rex}_T(\gamma)$, 
	\[ \partial_2(z_{(t_1,t_2,t_3)})= z_{(t_2,t_3)}- z_{(t_1^{t_2},t_3)}+ z_{(t_1^{t_3},t_2^{t_3})}. \] 
 Recall from \exref{exam: A3con}  that $d(\mathcal{B}_3)= \widetilde{H}_{1}(\mathcal{L}_{[2]})$ (resp. $d(\mathcal{B}_2)= \widetilde{H}_{0}(\mathcal{L}_{[1]})$) has a basis consisting of elements $z_{{\bf t}}, {\bf t} \in \mathcal{D}_{[2]}$ (resp. $z_{{\bf t}}, {\bf t} \in \mathcal{D}_{[1]}$). Using these basis elements, we have 
	\begin{align*}
		\partial_2(z_{ ((14), (34), (12))})&= z_{((34),(12))}- z_{((13),(12))}+ z_{((24),(34))},\\
		%= \beta_{(24)}-\beta_{(13)},
		\partial_2(z_{((13), (14), (12))}) & = z_{((14),(12))}- z_{((34),(12))} + z_{((23),(24))},\\
		%=-\beta_{(24)}+\beta_{(23)}+\beta_{(14)}-\beta_{(34)} ,
		\partial_2(z_{((24), (14), (34))} )& = z_{((14),(34))}- z_{((12),(34))} + z_{((23),(13))}\\
		&= z_{((14),(12))}+z_{((34),(12))} -z_{((13),(12))}+z_{((24),(34))}+ z_{((23),(24))}, \\
		%= -\beta_{(13)}+\beta_{(23)}+\beta_{(14)}-\beta_{(12)}
		\partial_2( z_{((13), (24), (14))} )& = z_{((24),(14))}- z_{((13),(14))} + z_{((34),(12))}\\
		&=z_{((24),(34))}+ z_{((34).(12))}-z_{((13),(12))}+z_{((34),(12))},\\
		%= -\beta_{(12)}+\beta_{(34)}+\beta_{(24)}-\beta_{(13)}
		\partial_2(z_{((23), (13), (24))}) & = z_{((13),(24))}- z_{((12),(24))} + z_{((34),(13))}\\
		&=z_{((34),(12))}.
		%=\beta_{(34)}-\beta_{(12)}.
	\end{align*}
It is easy to obtain the integral homology of the Milnor fibre $F_P$: 
\[ H_{2}(F_P;\mathbb{Z})\cong \mathbb{Z}^2, \quad H_1(F_P;\mathbb{Z})\cong \mathbb{Z}^2, \quad  H_0(F_P;\mathbb{Z})\cong \mathbb{Z}. \]
\end{example}

\section{Connection with the hyperplane complement}\label{sec: app2}

Recall that $M= \mathbb{C}^n - \bigcup_{t\in T} H_t$ is the hyperplane complement of $W$.
Using the algebra $\mathcal{B}$,  we give a chain complex whose homology is isomorphic to the homology of the hyperplane complement $M$. In addition, the integral homology of the Artin group $A(W)$ can also be calculated by a chain complex in terms of $\mathcal{B}$.

\subsection{The NCP model of \texorpdfstring{$M$}{M}} 
In \cite[Example 2.3]{BFW18}, the NCP model of $M$ is defined to be the $n$-dimensional CW complex $\mathcal{M}$,   whose $k$-cells  are of the form $(w, e< w_1< \dots <w_k)$, where $w\in W$ and $e< w_1< \dots <w_k$ is a chain of $\mathcal{L}$. Note that $w_k$ can be the maximal element $\gamma$ of $\mathcal{L}$. Each cell has $k$ boundary faces of the form $(w, e< w_1< \dots <\hat{w}_i<\dots< w_k)$ for $1\leq i\leq k$, and the remaining boundary face  $(ww_1, e< w_1^{-1}w_2< \dots < w_{1}^{-1}w_k)$.

We can compute the homology of $\mathcal{M}$.  The boundary maps $\partial_k$ are given by 
\begin{equation}\label{eq: boundmap}
  \begin{aligned}
  \partial_k((w,e<w_1<\dots <w_k))= &\, \, (ww_1,e<w_1^{-1}w_2<\dots< w_1^{-1}w_k) \\
  &+\sum_{i=1}^k (-1)^i \,  (w, e<w_1<\dots<\widehat{w_i}<\dots <w_k ).
\end{aligned}
\end{equation}
Since $\mathcal{M}$ has the homotopy type of the hyperplane complement $M$, we have 
\[
 H^k(M;\mathbb{Z})\cong H^k(\mathcal{M}; \mathbb{Z}): ={\rm Ker}\, \partial_k/ {\rm Im}\, \partial_{k+1}, \quad 0\leq k\leq n.
\]

Now we define a filtration of $\mathcal{M}$. Let $\mathcal{M}_i$ be the $i$-dimensional subcomplex of $\mathcal{M}$ whose $k$-cells  are of the form \
\[ (w, e<w_1<\dots<w_k), \quad \text{with\,  $w\in W$ and $\ell_{T}(w_k)\leq i$.}\]
Then we obtain an increasing filtration of $\mathcal{M}$ by  subcomplexes $\mathcal{M}_i$ for $0\leq i\leq n$.

\begin{lemma}\label{lem: hpcomE1}
   For each $1\leq i\leq n$, we have the isomorphisms
      \[ H_i(\mathcal{M}_i, \mathcal{M}_{i-1}) \cong \bigoplus_{w
    \in \mathcal{L}_i} \mathbb{Z}W \otimes \widetilde{H}_{i-2}(e,w) \cong  \mathbb{Z}W\otimes \mathcal{B}_i\]
    as free $\mathbb{Z}W$-modules, and  for each $k<i$ the homology group $H_k(\mathcal{M}_i, \mathcal{M}_{i-1})$  is trivial.
\end{lemma}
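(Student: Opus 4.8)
The plan is to follow the proof of \lemref{lem: Homoquo} almost verbatim, with the rank-selected subposet $\mathcal{L}_{[i]}$ replaced by the open intervals $(e,v)$, $v\in\mathcal{L}_i$. First I would describe the relative chain complex of the pair $(\mathcal{M}_i,\mathcal{M}_{i-1})$. By definition of the filtration, $C_k(\mathcal{M}_i,\mathcal{M}_{i-1})=C_k(\mathcal{M}_i)/C_k(\mathcal{M}_{i-1})$ is freely spanned by those $k$-cells $(w,e<w_1<\dots<w_k)$ with $\ell_T(w_k)=i$. Writing $v:=w_k$, such a cell amounts to a triple consisting of $w\in W$, $v\in\mathcal{L}_i$, and a (possibly empty) chain $w_1<\dots<w_{k-1}$ of the open interval $(e,v)$, i.e. a $(k-2)$-simplex of the order complex $\Delta(e,v)$. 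Since $W$ acts freely by left multiplication on the first coordinate, this yields a $\mathbb{Z}W$-module identification $C_k(\mathcal{M}_i,\mathcal{M}_{i-1})\cong \mathbb{Z}W\otimes\bigl(\bigoplus_{v\in\mathcal{L}_i}\widetilde{C}_{k-2}(\Delta(e,v))\bigr)$, where $\widetilde{C}_\bullet$ denotes the augmented simplicial chain complex, so that $\widetilde{C}_{-1}(\Delta(e,v))=\mathbb{Z}$ (the empty simplex) and $\widetilde{C}_j(\Delta(e,v))=0$ for $j\le -2$.

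Next I would check that under this identification the relative boundary map induced by \eqref{eq: boundmap} is, up to an overall sign, the reduced simplicial differential of $\Delta(e,v)$. In the formula \eqref{eq: boundmap} for $\partial_k(w,e<w_1<\dots<w_k)$, the leading term $(ww_1,e<w_1^{-1}w_2<\dots<w_1^{-1}w_k)$ has final entry $w_1^{-1}w_k$ with $\ell_T(w_1^{-1}w_k)=\ell_T(w_k)-\ell_T(w_1)\le i-1$ since $w_1>e$, so it lies in $\mathcal{M}_{i-1}$ and vanishes in the quotient; likewise the face obtained by deleting $w_k$ has final entry $w_{k-1}<w_k$, so $\ell_T(w_{k-1})\le i-1$ and it also vanishes. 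Hence only the faces deleting $w_1,\dots,w_{k-1}$ survive, and these are exactly the codimension-one faces of the simplex $\{w_1,\dots,w_{k-1}\}$ of $\Delta(e,v)$, carrying signs $(-1)^j$. Consequently $C_\bullet(\mathcal{M}_i,\mathcal{M}_{i-1})$ is isomorphic, as a complex of $\mathbb{Z}W$-modules, to $\mathbb{Z}W\otimes\bigoplus_{v\in\mathcal{L}_i}\widetilde{C}_{\bullet-2}(\Delta(e,v))$, and therefore $H_k(\mathcal{M}_i,\mathcal{M}_{i-1})\cong\mathbb{Z}W\otimes\bigoplus_{v\in\mathcal{L}_i}\widetilde{H}_{k-2}(e,v)$.

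Finally I would invoke that $\mathcal{L}$ is Cohen-Macaulay (\thmref{thm: edgelab}): for $v\in\mathcal{L}_i$ the group $\widetilde{H}_{k-2}(e,v)$ vanishes unless $k-2=\ell_T(v)-2=i-2$, i.e. unless $k=i$. This gives $H_k(\mathcal{M}_i,\mathcal{M}_{i-1})=0$ for $k<i$, and $H_i(\mathcal{M}_i,\mathcal{M}_{i-1})\cong\bigoplus_{v\in\mathcal{L}_i}\mathbb{Z}W\otimes\widetilde{H}_{i-2}(e,v)$. Applying part (3) of \thmref{thm: homint} identifies $\widetilde{H}_{i-2}(e,v)$ with $\mathcal{B}_v$, so summing over $v\in\mathcal{L}_i$ gives $\mathcal{B}_i=\bigoplus_{v\in\mathcal{L}_i}\mathcal{B}_v$ and hence $H_i(\mathcal{M}_i,\mathcal{M}_{i-1})\cong\mathbb{Z}W\otimes\mathcal{B}_i$, all as free $\mathbb{Z}W$-modules since $\mathcal{B}_i$ is $\mathbb{Z}$-free. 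The only real obstacle is the bookkeeping in the middle step: one must be careful about the degree shift (a $k$-cell of the quotient corresponds to a $(k-2)$-simplex, the case $k=1$ being the empty simplex, which is why reduced homology appears) and must verify that the two ``extra'' boundary faces genuinely lie in $\mathcal{M}_{i-1}$; everything else is formal and parallel to \lemref{lem: Homoquo}.
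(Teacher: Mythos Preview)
Your proposal is correct and follows essentially the same approach as the paper's own proof: identify the relative chain groups with $\mathbb{Z}W\otimes\bigoplus_{v\in\mathcal{L}_i}\widetilde{C}_{\bullet-2}(\Delta(e,v))$, check that the induced boundary is (up to sign) the simplicial differential, then invoke Cohen--Macaulayness and \thmref{thm: homint}. If anything, you are slightly more explicit than the paper in verifying that the ``extra'' face $(ww_1,e<w_1^{-1}w_2<\dots<w_1^{-1}w_k)$ and the $w_k$-deleting face fall into $\mathcal{M}_{i-1}$; the paper simply states the resulting formula for $\partial_k^0$.
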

\begin{proof}
 Denote by $C_{k}(\mathcal{M}_i)$ the $k$-th chain group of $\mathcal{M}_i$. Then  the  $k$-th chain group of the pair $(\mathcal{M}_i, \mathcal{M}_{i-1})$ is 
   $ C_{k}(\mathcal{M}_i, \mathcal{M}_{i-1}):=C_{k}(\mathcal{M}_i)/ C_{k}(\mathcal{M}_{i-1})$.  
  This is isomorphic to the  abelian group freely spanned by all $k$-cells of the form $(w, e<w_1<\dots <w_k)$,  where $w\in W$ and $e<w_1<\dots <w_k$ is a chain of $\mathcal{L}$ with $\ell_T(w_k)=i$. This gives rise to the following isomorphism of $\mathbb{Z}W$-modules: 
  \begin{equation}\label{eq: iso}
    C_{k}(\mathcal{M}_i, \mathcal{M}_{i-1})\cong \mathbb{Z}W\otimes \bigoplus_{u\in \mathcal{L}_i} C_{k-2}(e,u), 
  \end{equation}
  where $C_{k-2}(e,u)$ is the $(k-2)$-th chain group in the open interval $(e,u)$ of $\mathcal{L}$. More precisely, letting $[w, e<w_1<\dots <w_k]$ be the quotient  image of the $k$-cell $(w, e<w_1<\dots <w_k)$ in $C_{k}(\mathcal{M}_i, \mathcal{M}_{i-1})$,  the isomorphism above is given by mapping each $[w, e<w_1<\dots <w_k]$ to the tensor $w \otimes (w_1<\dots<w_{k-1})$ and extend it linearly. Clearly, this is an isomorphism of $\mathbb{Z}W$-modules. 

 Note that the  boundary maps $\partial^0_k$ on $C_{k}(\mathcal{M}_i, \mathcal{M}_{i-1})$ are induced from \eqnref{eq: boundmap}. Precisely,  the boundary maps $\partial^0_k$ are given by 
  \[ \partial^0_k([w, e<w_1<\dots <w_k])=\sum_{i=1}^{k-1} (-1)^i  \,  [w, e<w_1<\dots<\widehat{w_i}<\dots <w_k ].  \]
  Note that on the right hand side $w_k$ should  not be omitted.  
 This passes to boundary maps on  $\mathbb{Z}W\otimes \bigoplus_{u\in \mathcal{L}_i} C_{k-2}(e,u)$ through the isomorphism \eqref{eq: iso}, i.e. we have 
\[ \partial^0_k( w\otimes (w_1<\dots < w_{k-1}))=\sum_{i=1}^{k-1}(-1)^i w\otimes (w_1<\dots<\widehat{w_i}<\dots <w_{k-1}).   \]
In particular, the isomorphism \eqref{eq: iso} is a chain map, and hence for each $k$ we have 
  \[ H_{k}(\mathcal{M}_i, \mathcal{M}_{i-1})\cong \mathbb{Z}W\otimes \bigoplus_{u\in \mathcal{L}_i} \widetilde{H}_{k-2}(e,u). \]
By the Cohen-Macaulay property of $\mathcal{L}$, the right hand side is nontrivial if and only if $k=i$. In this case, by \thmref{thm: homint} we have $\bigoplus_{u\in \mathcal{L}_i} \widetilde{H}_{k-2}(e,u)\cong \bigoplus_{u\in \mathcal{L}_i}\mathcal{B}_u= \mathcal{B}_i$ for $1\leq i\leq n$. Hence $H_{k}(\mathcal{M}_i, \mathcal{M}_{i-1}) \cong \mathbb{Z}W\otimes \mathcal{B}_i$ as $\mathbb{Z}W$-modules. 
\end{proof}

\begin{theorem}\label{thm: homoM}
The integral homology of the hyperplane complement $M$ is isomorphic to the homology of the following chain  complex of abelian groups:
  \[ \begin{tikzcd}
    0 \arrow[r]&  \mathbb{Z}W\otimes  \mathcal{B}_n \arrow[r,"\partial_{n}"]  & \cdots \arrow[r] &   \mathbb{Z}W\otimes \mathcal{B}_1  \arrow[r,"\partial_1"]&  \mathbb{Z}W\otimes \mathcal{B}_0 \arrow[r] & 0,
  \end{tikzcd} \]
 where the boundary maps are given by 
\[ 
  \partial_k(w\otimes \beta_{(t_1, t_2, \dots ,t_k)})= \sum_{i=1}^k(-1)^{i-1} wt_i\otimes \beta_{(t_1^{t_i}, \dots, t_{i-1}^{t_i},t_{i+1}, \dots, t_k)}
  - \sum_{i=1}^k (-1)^{i-1} w\otimes \beta_{(t_1, \dots, \hat{t}_i, \dots, t_k)} 
\]
for any $w\in W$ and $(t_1, t_2, \dots, t_k)\in \bigcup_{u\in \mathcal{L}_k} {\rm Rex}_T(u)$ with $1\leq k\leq n$.
\end{theorem}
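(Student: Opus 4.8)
The plan is to mirror the proof of \thmref{thm: fullcom}, using the spectral sequence associated to the filtration $\mathcal{M}=\mathcal{M}_n \supset \mathcal{M}_{n-1}\supset \dots\supset \mathcal{M}_0\supset \mathcal{M}_{-1}=\emptyset$. By \lemref{lem: hpcomE1}, the $E^1$-page is concentrated in the single row $j=0$, with $E^1_{i,0}=H_i(\mathcal{M}_i,\mathcal{M}_{i-1})\cong \mathbb{Z}W\otimes \bigoplus_{u\in\mathcal{L}_i}\widetilde{H}_{i-2}(e,u)\cong \mathbb{Z}W\otimes\mathcal{B}_i$ and all other $E^1_{i,j}=0$. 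Since only one row survives, the spectral sequence degenerates at $E^2$, and $H_k(\mathcal{M};\mathbb{Z})\cong H_k(M;\mathbb{Z})$ is the homology of the single complex $(E^1_{\bullet,0},d^1)$. Thus it remains only to identify the differential $d^1\colon E^1_{i,0}\to E^1_{i-1,0}$ with the claimed boundary map under the isomorphism $E^1_{i,0}\cong \mathbb{Z}W\otimes\mathcal{B}_i$.

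First I would trace through the identification of $d^1$ exactly as in the proof of \thmref{thm: fullcom}. The differential $d^1$ is induced by the full boundary map $\partial_k$ of \eqref{eq: boundmap} on the associated graded; the part internal to $(\mathcal{M}_i,\mathcal{M}_{i-1})$ (the ``omit $w_i$, $1\le i\le k-1$'' terms, with $w_k$ retained) is killed when passing to $E^1$, and what remains are the two surviving pieces: the term $(ww_1, e<w_1^{-1}w_2<\dots<w_1^{-1}w_k)$, which drops the rank by one and lands in $\mathcal{M}_{i-1}$, together with the term $(-1)^k(w,e<w_1<\dots<w_{k-1})$ obtained by omitting $w_k$, which also has top rank $\le i-1$. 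Translating to sequences of reflections via $(t_1,\dots,t_k)$ with $t_j=w_{j-1}^{-1}w_j$, the first piece becomes $wt_1\otimes(t_2,\dots,t_k)$ and the second becomes $(-1)^k\, w\otimes(t_1,\dots,t_{k-1})$. Restricting these two linear maps to $\mathbb{Z}W\otimes\mathcal{B}_i\subseteq \mathbb{Z}W\otimes C_{i-1}$, I would then use parts (2) and (3) of \lemref{lem: betasum} (exactly as in \thmref{thm: fullcom}, where part (3) produces the conjugation sum $\sum_i(-1)^{i-1}wt_i\otimes\beta_{(t_1^{t_i},\dots,t_{i-1}^{t_i},t_{i+1},\dots,t_k)}$ from the ``$wt_1\otimes(t_2,\dots,t_k)$'' piece) to rewrite the first piece, while the second piece $(-1)^k w\otimes\beta_{(t_1,\dots,t_{k-1})}$ combines with the $d$-expansion \eqref{eq: zexp} to yield $-\sum_{i=1}^k(-1)^{i-1}w\otimes\beta_{(t_1,\dots,\hat{t}_i,\dots,t_k)}$ after accounting for the retained $w_k$ term. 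Concretely, since here the chains run all the way up through $\gamma$ and $w_k$ is \emph{not} omitted in the intra-block differential, the $E^1$-differential is genuinely $\partial_k$ rather than its truncation, so the ``omit $w_i$'' part contributes the full alternating sum; combining this with the Hurwitz-conjugation term from part (3) of \lemref{lem: betasum} gives precisely the stated formula.

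The main obstacle, as in \thmref{thm: fullcom}, is the bookkeeping needed to show that $\partial$ commutes with $1\otimes\beta$ (equivalently, with the relevant Hurwitz-action reindexing) so that the combinatorially defined boundary map on $\mathbb{Z}W\otimes\mathcal{B}_k$ really is the restriction of the simplicial $\partial_k$; one has to verify that $\partial_k^0$ as written on the cellular chains, after identification with $\mathbb{Z}W\otimes C_{k-1}$, preserves the subgroup $\mathbb{Z}W\otimes\mathcal{B}_k$ and that on generators $\beta_{(t_1,\dots,t_k)}$ it is given by the displayed formula. This is a routine but somewhat delicate induction, entirely parallel to the computation carried out in the proof of \thmref{thm: fullcom}, the only difference being that the intra-block summand now contributes all $k$ of the face terms $\beta_{(t_1,\dots,\hat{t}_i,\dots,t_k)}$ (via \eqref{eq: zexp} applied without the truncation $d$) rather than being absorbed into $d$. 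I would therefore present the argument by saying ``the proof is analogous to that of \thmref{thm: fullcom}, with the simplification that $\mathcal{B}_\bullet$ itself (rather than $d(\mathcal{B}_\bullet)$) appears since the chains are not required to terminate below $\gamma$,'' and then spell out the two-term differential and the application of \lemref{lem: betasum}(3) to obtain the final formula.
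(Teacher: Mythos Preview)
Your proposal is correct and follows essentially the same route as the paper's own proof: the spectral sequence of the filtration $\mathcal{M}_i$, \lemref{lem: hpcomE1} to see that $E^1$ is concentrated in the row $j=0$ with $E^1_{i,0}\cong\mathbb{Z}W\otimes\mathcal{B}_i$, and then identification of $d^1$ via the two surviving boundary terms (the ``drop $w_1$'' term and the ``omit $w_k$'' term) together with parts (2) and (3) of \lemref{lem: betasum}. One minor slip: when you write ``the second piece $(-1)^k w\otimes\beta_{(t_1,\dots,t_{k-1})}$'' you mean $(-1)^k\, w\otimes d(\beta_{(t_1,\dots,t_k)})$, since the truncation map applied term-by-term to $\beta_{\mathbf t}$ is exactly $d$; your subsequent use of \eqref{eq: zexp} is then the correct way to expand this into $-\sum_i(-1)^{i-1}w\otimes\beta_{\mathbf t(\hat i)}$, matching the paper.
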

\begin{proof}
 We apply the spectral sequence to the filtration $\emptyset=\mathcal{M}_{-1} \subset \mathcal{M}_0 \subset \mathcal{M}_1\subset \dots \subset \mathcal{M}_n=\mathcal{M}$. By \lemref{lem: hpcomE1}, the $E^1$-page is concentrated in a single row, which is the chain complex 
  \[ \begin{tikzcd}
    0 \arrow[r]&  H_n(\mathcal{M}_n, \mathcal{M}_{n-1}) \arrow[r,"\partial_{n}"]  & \cdots \arrow[r] &   H_1(\mathcal{M}_1, \mathcal{M}_0)  \arrow[r,"\partial_1"]&  H_0(\mathcal{M}_0) \arrow[r] & 0,
  \end{tikzcd} \]
where $H_i(\mathcal{M}_i, \mathcal{M}_{i-1})\cong \mathbb{Z}W\otimes \mathcal{B}_i$ for $1\leq i\leq n$, and by definition $H_0(\mathcal{M}_0)\cong \mathbb{Z}W \cong \mathbb{Z}W \otimes \mathcal{B}_0$.

It remains to determine the boundary maps, which are induced from \eqnref{eq: boundmap}. Note that $x\in H_i(\mathcal{M}_i, \mathcal{M}_{i-1})$ is a sum of cells of the form $[w, e< w_1<\dots < w_i]$ with $\ell_T(w_j)=j$ for $1\leq j\leq i$. Using \eqnref{eq: boundmap}, we obtain the induced boundary map defined on those cells by 
\[
\begin{aligned}
\partial_i([w, e< w_1<\dots < w_i]) =&  [ww_1, e< w_1^{-1}w_2< \dots < w_1^{-1}w_i] \\
 &+ (-1)^k [w, e< w_1< \dots < w_{i-1}]. 
\end{aligned}
\]
We may identify $e< w_1<\dots < w_i$ with the sequence $(t_1, \dots , t_i)$, where $t_j= w_{j-1}^{-1}w_j$ for $1\leq j\leq i$. Then the above map can be rewritten as 
\begin{equation}\label{eq: relbound}
\partial_i([w, (t_1, \dots , t_i)]) =  [wt_1, (t_2, \dots , t_i)]+ (-1)^k [w, (t_1, \dots , t_{i-1})].     
\end{equation}

We proceed to write $x\in H_i(\mathcal{M}_i, \mathcal{M}_{i-1})$ explicitly in terms of elements $[w, (t_1, \dots , t_i)]$. For notational convenience, we assume  $\sum_{k}\lambda_k [w, (t_1, \dots , t_i)]= \sum_k[w, \lambda_k (t_1, \dots , t_i)] $ for $\lambda_k\in \mathbb{Z}$. By \lemref{lem: hpcomE1} and  \thmref{thm: homint}, we have the isomorphisms
\begin{equation}\label{eq: isorelhom}
     \mathbb{Z}W\otimes \mathcal{B}_i \cong \mathbb{Z}W\otimes  \bigoplus_{u\in \mathcal{L}_i} \widetilde{H}_{k-2}(e,u)\cong H_i(\mathcal{M}_i,\mathcal{M}_{i-1}),
\end{equation}
where each element $w\otimes \beta_{(t_1, \dots , t_i)}$ is sent to $w\otimes d(\beta_{(t_1, \dots , t_i)}) $, and then to $[w, \beta_{(t_1, \dots , t_k)}]$. Therefore, $x\in H_i(\mathcal{M}_i, \mathcal{M}_{i-1})$  is a linear combination of elements $[w, \beta_{(t_1, \dots , t_k)}]\in H_i(\mathcal{M}_i,\mathcal{M}_{i-1})$. Now applying the map \eqnref{eq: relbound} to the element $[w, \beta_{(t_1, \dots , t_k)}]$ and using \lemref{lem: betasum}, we have 
\[ \partial_k([w, \beta_{(t_1, \dots , t_k)}])= \sum_{i=1}^k(-1)^{i-1} [wt_i,  \beta_{(t_1^{t_i}, \dots, t_{i-1}^{t_i},t_{i+1}, \dots, t_k)}] - \sum_{i=1}^k (-1)^{i-1} [w,\beta_{(t_1, \dots, \hat{t}_i, \dots, t_k)}]. \]
Going backwards along the isomorphisms \eqnref{eq: isorelhom}, we obtain the boundary maps on $\mathbb{Z}W\otimes \mathcal{B}_i$ as given in the theorem.
\end{proof}

\begin{remark}
  It is well known that the cohomology ring $H^*(M;\mathbb{Z})$ is isomorphic to the Orlik-Solomon algebra \cite{OS80}. In view of \thmref{thm: homoM}, there may be a mysterious link between $\mathbb{Z}W\otimes \mathcal{B}$ and the Orlik-Solomon algebra. 
\end{remark}

\subsection{The NCP model of \texorpdfstring{$M/W$}{M/W}}
Recall that the hyperplane complement $M$ is a $K(\pi,1)$ space; its fundamental group is the pure Artin group $PB(W)$ of type $W$ and all higher homotopy groups are trivial. Since $M$ admits a free $W$-action, the quotient space $M/W$ is also a $K(\pi, 1)$ with the fundamental group being the Artin group $A(W)$ of type $W$.

In \cite[Example 2.2]{BFW18}, it is shown that $M/W$ is homotopy equivalent to the NCP model $\mathcal{M}^W$, which is a CW complex whose $k$ cells are the chains $e<w_1<\dots <w_k$ of the NCP lattice $\mathcal{L}$. Any such $k$-cell has $k$ maximal faces $e<w_1<\dots < \hat{w}_i< \dots <w_k $ for $1\leq i\leq k$, and the remaining maximal face $e< w_1^{-1}w_2< \dots < w_1^{-1} w_k$. One can define the boundary maps $\partial_k$ of $\mathcal{M}^W$ in a similar way to \eqnref{eq: boundmap}. Since $M/W$ is a $K(\pi,1)$, we have 
\[ H_k(M/W;\mathbb{Z})= H_k(A(W); \mathbb{Z})\cong H_{k}(\mathcal{M}^W; \mathbb{Z})= {\rm Ker}\, \partial_k/ {\rm Im}\, \partial_{k+1}.  \]
Therefore, the integral homology of the Artin group $A(W)$ can also be calculated by the NCP model $\mathcal{M}^W$. 

We can define a filtration of $\mathcal{M}^W$ similar to that of $\mathcal{M}$. Precisely, denote by $\mathcal{M}_i^W$ the $i$-dimensional subcomplex of $\mathcal{M}$ whose $k$-cells are the chains $e< w_1<\dots <w_k$ of $\mathcal{L}$ with $\ell_T(w_k)\leq i$.  
Then we have the following lemma.

\begin{lemma}\label{lem: homrelMquo}
 For each $1\leq i\leq n$, we have the isomorphisms of free abelian groups 
      \[ H_i(\mathcal{M}^W_i, \mathcal{M}^W_{i-1}) \cong \bigoplus_{w
    \in \mathcal{L}_i} \widetilde{H}_{i-2}(e,w) \cong  \mathcal{B}_i, \]
    and  for each $k<i$ the homology group $H_k(\mathcal{M}^W_i, \mathcal{M}^W_{i-1})$ is trivial.
\end{lemma}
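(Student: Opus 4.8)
The plan is to mimic the proof of \lemref{lem: hpcomE1}, replacing the hyperplane complement $\mathcal M$ by its quotient $\mathcal M^W$; the only difference is that the group element $w\in W$ carried by each cell is dropped, so the tensor factor $\mathbb ZW$ disappears and the boundary operator simplifies accordingly. First I would write down the relative chain groups: for $0\le k\le i$ the quotient $C_k(\mathcal M^W_i,\mathcal M^W_{i-1}):=C_k(\mathcal M^W_i)/C_k(\mathcal M^W_{i-1})$ is the free abelian group on the $k$-cells $e<w_1<\dots<w_k$ of $\mathcal L$ with $\ell_T(w_k)=i$. Splitting off $w_k$ (which ranges over $\mathcal L_i$) and identifying $e<w_1<\dots<w_{k-1}$ with a $(k-2)$-chain of the open interval $(e,u)$ for $u:=w_k$, I obtain an isomorphism
\[
  C_k(\mathcal M^W_i,\mathcal M^W_{i-1})\;\cong\; \bigoplus_{u\in\mathcal L_i} C_{k-2}(e,u),
\]
sending the class $[e<w_1<\dots<w_k]$ to $(w_1<\dots<w_{k-1})$ in the summand indexed by $u=w_k$.

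Next I would check that this identification is a chain map. The boundary $\partial_k$ of $\mathcal M^W$ has two types of terms: omitting an interior $w_i$ with $1\le i\le k-1$, omitting $w_k$, and the "shifted" face $e<w_1^{-1}w_2<\dots<w_1^{-1}w_k$. Passing to the quotient by $\mathcal M^W_{i-1}$ kills the face omitting $w_k$ (its top element has rank $<i$) and also kills the shifted face whenever $w_1^{-1}w_k$ has rank $<i$ — but in fact the shifted face always has top element of rank $i-\ell_T(w_1)<i$, hence lies in $\mathcal M^W_{i-1}$ and vanishes. Therefore the induced differential $\partial^0_k$ only omits interior $w_i$, $1\le i\le k-1$, which is exactly the simplicial boundary of the open interval $(e,u)$ (shifted by sign and degree). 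Consequently
\[
  H_k(\mathcal M^W_i,\mathcal M^W_{i-1})\;\cong\;\bigoplus_{u\in\mathcal L_i}\widetilde H_{k-2}(e,u).
\]

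Finally I would invoke the Cohen--Macaulay property of $\mathcal L$ (from \secref{sec: EL}): each open interval $(e,u)$ with $\ell_T(u)=i$ has vanishing reduced homology except in degree $i-2$, so the right-hand side is nonzero only when $k=i$. In that case \thmref{thm: homint} gives $\widetilde H_{i-2}(e,u)\cong\mathcal B_u$ via the isomorphism $d$, and summing over $u\in\mathcal L_i$ yields $\bigoplus_{u\in\mathcal L_i}\widetilde H_{i-2}(e,u)\cong\bigoplus_{u\in\mathcal L_i}\mathcal B_u=\mathcal B_i$. This establishes all three claimed isomorphisms and the vanishing for $k<i$. I do not anticipate a serious obstacle: the argument is a routine transcription of \lemref{lem: hpcomE1}, and the main point requiring a moment's care is verifying that the "shifted" face of each top-dimensional cell drops out in the relative complex, so that $\partial^0_k$ genuinely reduces to the order-complex differential of the intervals $(e,u)$.
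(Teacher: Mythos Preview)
Your proposal is correct and is exactly what the paper intends: its own proof consists of the single sentence ``The proof is similar to that of \lemref{lem: hpcomE1},'' and you have carried out precisely that transcription, dropping the $\mathbb{Z}W$ factor and verifying that both the shifted face and the face omitting $w_k$ vanish in the relative complex so that the induced differential is the order-complex boundary of $(e,u)$. One trivial slip in wording: you say ``two types of terms'' and then list three, and in your closing remark the shifted face must drop for every cell in the relative complex (not just top-dimensional ones), which your main argument in fact handles correctly.
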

\begin{proof}
 The proof is similar to that of \lemref{lem: hpcomE1}.
\end{proof}

\begin{theorem}\label{thm: homoMquo}
The integral homology of  $M/W$ or the Artin group $A(W)$ is isomorphic to the homology of the following chain  complex of abelian groups:
  \[ \begin{tikzcd}
    0 \arrow[r]&    \mathcal{B}_n \arrow[r,"\partial_{n}"]  & \mathcal{B}_{n-1}\arrow[r]& \cdots \arrow[r] &    \mathcal{B}_1  \arrow[r,"\partial_1"]&   \mathcal{B}_0 \arrow[r] & 0,
  \end{tikzcd} \]
 where the boundary maps are given by 
\begin{equation}\label{eq: HomArtin}
  \partial_k( \beta_{(t_1, t_2, \dots ,t_k)})= \sum_{i=1}^k(-1)^{i-1}  \beta_{(t_1^{t_i}, \dots, t_{i-1}^{t_i},t_{i+1}, \dots, t_k)}
  - \sum_{i=1}^k (-1)^{i-1}  \beta_{(t_1, \dots, \hat{t}_i, \dots, t_k)} 
\end{equation}
for $(t_1, t_2, \dots, t_k)\in \bigcup_{u\in \mathcal{L}_k} {\rm Rex}_T(u)$ with $1\leq k\leq n$.
\end{theorem}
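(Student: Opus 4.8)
The plan is to mimic the proof of \thmref{thm: homoM} almost verbatim, using the NCP model $\mathcal{M}^W$ of $M/W$ in place of $\mathcal{M}$ and the filtration by the subcomplexes $\mathcal{M}^W_i$ introduced just above. First I would recall that, since $M/W$ is a $K(\pi,1)$ with fundamental group $A(W)$, computing $H_*(M/W;\mathbb{Z})$ is equivalent to computing $H_*(\mathcal{M}^W;\mathbb{Z})$, and that the filtration $\emptyset=\mathcal{M}^W_{-1}\subset \mathcal{M}^W_0\subset \dots \subset \mathcal{M}^W_n=\mathcal{M}^W$ gives rise to a homology spectral sequence. By \lemref{lem: homrelMquo}, the $E^1$-page $E^1_{i,j}=H_{i+j}(\mathcal{M}^W_i,\mathcal{M}^W_{i-1})$ is concentrated in the single row $j=0$, where $E^1_{i,0}\cong \mathcal{B}_i$ for $1\leq i\leq n$ and $E^1_{0,0}\cong \mathbb{Z}\cong \mathcal{B}_0$. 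Consequently the spectral sequence degenerates after the $E^2$-page onto the single row, and $H_k(\mathcal{M}^W;\mathbb{Z})$ is the homology of the complex $(E^1_{i,0},d^1)$, i.e. of $(\mathcal{B}_i,\partial_i)$ once the differentials are identified.

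The remaining work is to identify the $d^1$-differential $E^1_{i,0}\to E^1_{i-1,0}$ with the map \eqref{eq: HomArtin}. This is exactly the computation carried out in the proof of \thmref{thm: homoM}, but with the tensor factor $\mathbb{Z}W$ (and the left-multiplication bookkeeping $w\mapsto wt_1$) erased. Concretely: identify the $k$-cell $e<w_1<\dots<w_k$ of $\mathcal{M}^W$ with the reflection sequence $(t_1,\dots,t_k)$ where $t_j=w_{j-1}^{-1}w_j$; the relative boundary induced by the analogue of \eqref{eq: boundmap} then sends $[(t_1,\dots,t_i)]$ to $[(t_2,\dots,t_i)] + (-1)^i[(t_1,\dots,t_{i-1})]$, where now the ``$w_1^{-1}w_j$'' contraction no longer carries a group element. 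Using the isomorphism $\mathcal{B}_i\cong \bigoplus_{u\in\mathcal{L}_i}\widetilde{H}_{i-2}(e,u)\cong H_i(\mathcal{M}^W_i,\mathcal{M}^W_{i-1})$ of \lemref{lem: homrelMquo} and \thmref{thm: homint}, each basis element $\beta_{(t_1,\dots,t_i)}$ maps to $[\beta_{(t_1,\dots,t_i)}]$, and applying the relative boundary together with part (3) of \lemref{lem: betasum} (which expands $\beta_{(t_1,\dots,t_k)}$ by extracting the first entry, producing the conjugated sequences $(t_1^{t_i},\dots,t_{i-1}^{t_i},t_{i+1},\dots,t_k)$) and part (2) or (1) of \lemref{lem: betasum} (which produces the omission terms $\beta_{(t_1,\dots,\hat t_i,\dots,t_k)}$) yields precisely formula \eqref{eq: HomArtin}. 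Transporting this back along the isomorphism gives the asserted boundary map on $\bigoplus_k\mathcal{B}_k$.

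The one point requiring care — and the main (minor) obstacle — is the bookkeeping in matching signs between the induced simplicial boundary on the relative chain groups and the two sums in \eqref{eq: HomArtin}: the first sum comes from the ``merging'' face $e<w_1^{-1}w_2<\dots<w_1^{-1}w_k$ after unravelling it through $\beta$ via \lemref{lem: betasum}(3), while the second sum comes from the genuine face-omissions, with the sign $(-1)^{i-1}$ matching the simplicial convention already fixed in the proof of \thmref{thm: homoM}. Since \lemref{lem: homrelMquo} is asserted with proof ``similar to that of \lemref{lem: hpcomE1}'', one may quote it directly; likewise the spectral-sequence collapse argument is identical to the one in \thmref{thm: homoM}. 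I would therefore write the proof as: ``The proof is analogous to that of \thmref{thm: homoM}, replacing $\mathcal{M}$ by $\mathcal{M}^W$ and forgetting the tensor factor $\mathbb{Z}W$; the filtration by $\mathcal{M}^W_i$ and \lemref{lem: homrelMquo} give an $E^1$-page concentrated in one row with $E^1_{i,0}\cong\mathcal{B}_i$, and the same computation using \lemref{lem: betasum} identifies the $d^1$-differential with \eqref{eq: HomArtin}.''
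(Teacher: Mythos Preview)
Your proposal is correct and follows essentially the same approach as the paper; indeed the paper's own proof consists of the single sentence ``The proof is similar to that of \thmref{thm: homoM}, by using \lemref{lem: homrelMquo},'' and your write-up is precisely the expanded version of that sentence.
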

\begin{proof}
  The proof is similar  to that of \thmref{thm: homoM}, by using \lemref{lem: homrelMquo}. 
\end{proof}

\begin{remark}
 It follows from \thmref{thm: homint} that the elements $\beta_{{\bf t}}, {\bf t}\in \bigcup_{w\in \mathcal{L}_k} \mathcal{D}_w$ form a basis of $\mathcal{B}_k$. On the right hand side of equation \eqref{eq: HomArtin}, for any $(t_1, \dots t_k)\in \mathcal{D}_w$ the elements $\beta_{(t_1, \dots, \hat{t}_i, \dots, t_k)}, 1\leq i\leq k$ are $\mathbb{Z}$-linearly independent, while $\beta_{(t_1^{t_i}, \dots, t_{i-1}^{t_i},t_{i+1}, \dots, t_k)}, 1\leq i\leq k$ are generally not  basis elements of $\mathcal{B}_{k-1}$. However,  we can use quadratic relations given in \propref{prop: quadrel} to express $\beta_{(t_1^{t_i}, \dots, t_{i-1}^{t_i},t_{i+1}, \dots, t_k)}$ in terms of the basis elements  of $\mathcal{B}_{k-1}$.  As we have mentioned in \rmkref{rmk: quadrel}, the quadratic relations actually generate all linear relations among elements $\beta_{{\bf t}}$.
\end{remark}

\begin{example}
	(Dihedral group) We continue \exref{exam: dihbasis} and compute the homology of the Artin group of type $I_2(m)$. Recall that $\mathcal{B}_2$ has a basis consisting of elements $\beta_{(t_i,t_{i-1})}, 2\leq i\leq m$, where $t_i=s_1(s_2s_1)^{i-1}$ with the index taken modulo $m$, and $\mathcal{B}_1$ is spanned by the basis $\beta_{t_i}, 1\leq i\leq m$.  We have the following chain complex:
	\[\begin{tikzcd}
		0 \arrow[r]&    \mathcal{B}_2 \arrow[r,"\partial_{2}"]   &    \mathcal{B}_1  \arrow[r,"\partial_1"]&   \mathcal{B}_0 \arrow[r] & 0,
	\end{tikzcd}\]
where $\partial_1=0$ and for $2\leq i\leq m$
\[
 \partial_{2}(\beta_{(t_i,t_{i-1})})= \beta_{t_i}- \beta_{t_{i}^{t_{i-1}}}=\beta_{t_i}- \beta_{t_{i-2}}.
\]
Note that if $m$ is even, we have the relation $\sum_{i=0}^{m/2-1} (\beta_{t_{m-2i}}-\beta_{t_{m-2(i+1)}})=0$. It is easily verified that ${\rm Im}\, \partial_2 \cong \mathbb{Z}^{m-2}$ if $m$ is even, and ${\rm Im}\, \partial_2 \cong \mathbb{Z}^{m-1}$ if $m$ is odd. Therefore, the integral homology groups $H_k$  are given by 
\[
\begin{aligned}
  H_0\cong \mathbb{Z}, &\quad  H_1\cong \mathbb{Z}, &\;  H_2\cong 0, \quad  & \text{if $m$ is odd,}\\
H_0\cong \mathbb{Z}, &\quad H_1\cong \mathbb{Z}^2, &\;  H_2\cong \mathbb{Z}, \quad & \text{if $m$ is even.}
\end{aligned}
\]

\end{example}

\subsection{Some remarks}
  The integral cohomology of the Artin group $A(W)$ has been obtained using various methods. This was initiated by Arnol'd in  \cite{Arn70}, where he computed the integral cohomology groups of the braid group $B_n$ (i.e. Artin group of type $A_{n-1}$) for $n\leq 11$ using the space of square-free complex polynomials of degree $n$. Subsequently, Fuks \cite{Fuk70} computed the cohomologies of the braid groups with coefficients in $\mathbb{Z}/2\mathbb{Z}$,  using the  one-point compactification of the configuration space. This method was later applied by  Vainshtein \cite{Vai78}, who gave a complete description of the cohomologies of the braid groups with coefficients in $\mathbb{Z}$ and $\mathbb{Z}/p\mathbb{Z}$ for any prime number $p$.   In \cite{Gor81}, Goryunov used the methods of Fuks and Vainshtein to express the integral cohomologies of the Artin groups of type $C_n$ and $D_n$ in terms of the cohomologies of the braid groups. Finally, for all exceptional types,  the  integral cohomologies of the Artin groups were computed by Salvetti through the  Salvetti complex \cite{Sal94}.

\thmref{thm: homoMquo} provides a unified approach to computing the integral  homology of the Artin group $A(W)$ for any finite Coxeter group $W$. By computer calculations we obtain integral homology groups  $H_*(A(W); \mathbb{Z})$ for  types $A_n (n\leq 8)$, $B_n/C_n (n\leq 6)$, $D_n (n\leq 7)$ and  exceptional types $H_3, H_4, F_4, E_6$. These homology groups can be translated into cohomology groups by the universal coefficient theorem, and the results coincide with those given in \cite{Arn70}, \cite{Gor81} and  \cite[Table 1]{Sal94}. Recently, Paolini and Salvetti \cite{PS21} proved the $K(\pi,1)$ conjecture for affine Artin groups, using the EL-shellability of the affine noncrossing partition posets. We hope to extend techniques in this paper to affine Artin groups,  and give a unified method to compute the corresponding integral (co)homology. The cohomology of affine Artin groups of types $\widetilde{A}_n$ and $\widetilde{B}_n$ were computed in \cite{CMS08,CMS10}. 

Compared with the Salvetti complex, our chain complex in \thmref{thm: homoMquo} has larger ranks. For example, the rank of the top chain group $\mathcal{B}_n$ equals $\mu(\mathcal{L})$, which in type $A_n$ is given by the Catalan number $c_n=\frac{1}{n+1}\binom{2n}{n}$ (see \cite[Section 4.8]{Zha20}). Despite the large ranks, for the Milnor fibres our chain complexes in \thmref{thm: redfib} and \thmref{thm: disfibre}   reduce significantly the calculation of the integral homology to a mechanical computation in terms of the algebra $\mathcal{B}$, while the Salvetti complex works over the ring $\mathbb{Z}[q,q^{-1}]$ \cite{CS04}, which usually makes it difficult to obtain the abelian group structure of the homology. Moreover, it is expected that our approach would compute efficiently  the (co)homology of the Milnor fibre with coefficients in an irreducible representation of $\mathbb{C}W$  \cite[Appendix A]{LZ22}. We hope to use the method developed in this paper and \cite{LZ22}  to prove the representation stability result in the sense of \cite{CF13} for the Milnor fibre. In type $A_{n}$, the representation stability was obtained in \cite{MT21}.

\appendix

%In this appendix, 
\section{Computational results}\label{sec: appd}
\subsection{Exceptional types}
All calculations in this appendix are done with the  Magma computational algebra system except for type $I_2(m)$. By computer calculations, we obtain $H_i(F_P; \mathbb{Z})$ for all exceptional types in Table \ref{tab: disexc}, where $\mathbb{Z}_n$ denotes the quotient $\mathbb{Z}/n \mathbb{Z}$ for any integer $n$.  A similar table appears in \cite[Table 1]{CS04} with entries being ideals in $\mathbb{Z}[q,q^{-1}]$. Our table here exhibits clearer abelian group structures.

\begin{table}[H]
\begin{tabular}{c|cccccccc} \hline
         & $H_0(F_P)$         & $H_1(F_P)$   & $H_2(F_P)$  & $H_3(F_P)$  & $H_4(F_P)$   & $H_5(F_P)$   & $H_6(F_P)$   & $H_7(F_P)$            \\ \hline
$I_2(m)$ & $\mathbb{Z}$  & $\mathbb{Z}^{m-1}$   &     &        &         &         &      &          \\ 
$H_3$    & $\mathbb{Z}$   & $0$   & $\mathbb{Z}^7$    &        &         &         &       &               \\ 
$H_4$    & $\mathbb{Z}$   & $0$   & $\mathbb{Z}_2$   & $\mathbb{Z}^{43}$   &         &    &      &      \\ 
$F_4$    & $\mathbb{Z}$   & $\mathbb{Z}$  & $\mathbb{Z}^5$   & $\mathbb{Z}^{15}$   &    &    &     &       \\ 
$E_6$    & $\mathbb{Z}$   & $0$  & $\mathbb{Z}_2$  & $0$  & $\mathbb{Z}^{6}$   & $\mathbb{Z}^{14}$   &         &                  \\ 
$E_7$    & $\mathbb{Z}$   & $0$  & $\mathbb{Z}_2$  & $\mathbb{Z}_2$ & $\mathbb{Z}^2\oplus  \mathbb{Z}_6$  & $\mathbb{Z}^2$  & $\mathbb{Z}^{15}$   &                 \\ 
$E_8$    & $\mathbb{Z}$   & $0$  & $\mathbb{Z}_2$  & $0$ & $ \mathbb{Z}^2\oplus \mathbb{Z}_6$  & $\mathbb{Z}_3$  & $\mathbb{Z}^8\oplus \mathbb{Z}_2$  & $\mathbb{Z}^{55}$        \\ \hline
\end{tabular}
\vspace*{2mm}
\caption{Integral homology groups of $F_P$ for exceptional types}
\label{tab: disexc}
\end{table}

In Table \ref{tab: redexc}, we  list the integral homology groups of the Milnor fibre $F_{Q_0}$ for Coxeter groups of types $I_2(m)$, $H_3$ and $F_4$. These homology groups are all torsion free. In \cite[Table 1]{Set09} Settepanella calculated $H_*(F_{Q_0};\mathbb{Q})$ for types $H_3,H_4,F_4$. However, her results for $H_4$ and $F_4$ do not meet the property that the Euler characteristic of $F_{Q_0}$ should be divisible by the number of hyperplanes of $W$; see \cite[Proposition 3]{DL16}. 
%We are not able to compute the other exceptional types due to the huge order of the group and the computer memory limit. 

\begin{table}[H]
\centering

{\def\arraystretch{1.6}\tabcolsep=10pt
\begin{tabular}{c|cccc}
\hline
      & $H_0(F_{Q_0})$ & $H_1(F_{Q_0})$  & $H_2(F_{Q_0})$  & $H_3(F_{Q_0})$   \\ \hline
$I_2(m)$ & $\mathbb{Z}$     & $\mathbb{Z}^{(m-1)^2}$      &  &     \\
$H_3$ & $\mathbb{Z} $    & $\mathbb{Z}^{14}$   &  $\mathbb{Z}^{493}$     &    \\

$F_4$ & $\mathbb{Z}$     & $\mathbb{Z}^{23}$    & $\mathbb{Z}^{183}$    & $\mathbb{Z}^{5921} $       \\ \hline

\end{tabular}
}
\vspace*{2mm}
\caption{Integral homology groups of $F_{Q_0}$ for exceptional types}
\label{tab: redexc}
\end{table}

\subsection{Infinite families in low ranks}
The integral homology groups $H_{*}(F_P;\mathbb{Z})$ in types $A_n$, $B_n$ and $D_n$ for $n\leq 8$  are given in Table \ref{tab: disA}, Table \ref{tab: disB} and Table \ref{tab: disD}, respectively. Table \ref{tab: disA} can be found in \cite[Table 4]{Cal06}.  Callegaro also determines the homology ring for $W$ of type $B_n$ \cite{Cal07}. However, due to the complicated ring structure we are not able to obtain explicit torsion in homology from its description. As far as we know, the results for type $D_n$ are completely new.

\begin{table}[H]
\begin{tabular}{c|cccccccc} \hline
         & $H_0$          & $H_1$             & $H_2$           & $H_3$  & $H_4$   & $H_5$   & $H_6$   & $H_7$             \\ \hline
 $A_1$     & $\mathbb{Z}$ &                   &                 &        &         &         &         &            \\
$A_2$    & $\mathbb{Z}$   & $\mathbb{Z}^{2}$  &             &        &         &         &      &                   \\ 
$A_3$    & $\mathbb{Z}$   & $\mathbb{Z}^2$    & $\mathbb{Z}^2$  &        &         &         &       &                 \\ 
$A_4$    & $\mathbb{Z}$   & $0$               & $\mathbb{Z}^2$  & $\mathbb{Z}^{4}$   &         &    &      &      \\ 
$A_5$    & $\mathbb{Z}$   & $0$               & $ \mathbb{Z}^2\oplus \mathbb{Z}_2$  & $\mathbb{Z}^{4}$   & $\mathbb{Z}^{2}$    &   &    &       \\ 
$A_6$    & $\mathbb{Z}$   & $0$               & $\mathbb{Z}^2 \oplus  \mathbb{Z}_2 $  & $0$  & $\mathbb{Z}^{2}$   & $\mathbb{Z}^{6}$    &         &                \\ 
$A_7$    & $\mathbb{Z}$   & $0$               & $\mathbb{Z}_2$  & $0$ & $ \mathbb{Z}^2\oplus \mathbb{Z}_3 $  & $\mathbb{Z}^6$  & $\mathbb{Z}^4$  &                \\ 
$A_8$    & $\mathbb{Z}$   & $0$               & $\mathbb{Z}_2$  & $\mathbb{Z}^2$ & $ \mathbb{Z}^2\oplus \mathbb{Z}_3$  & $\mathbb{Z}_2^2$  & $\mathbb{Z}^4$  & $\mathbb{Z}^6$         \\ \hline
\end{tabular}
\vspace*{2mm}
\caption{Integral homology groups of $F_P$ for type $A_n, n\leq 8$}
\label{tab: disA}
\end{table}

\begin{table}[H]
\begin{tabular}{c|cccccccc} \hline
         & $H_0$         & $H_1$   & $H_2$  & $H_3$  & $H_4$   & $H_5$   & $H_6$   & $H_7$            \\ \hline
$B_2$    & $\mathbb{Z}$  & $\mathbb{Z}^{3}$   &     &        &         &         &      &           \\ 
$B_3$    & $\mathbb{Z}$   & $\mathbb{Z}$   & $\mathbb{Z}^3$    &        &         &         &       &               \\ 
$B_4$    & $\mathbb{Z}$   & $\mathbb{Z}$   & $\mathbb{Z}^3$   & $\mathbb{Z}^{7}$   &         &    &      &      \\ 
$B_5$    & $\mathbb{Z}$   & $\mathbb{Z}$  & $ \mathbb{Z}\oplus \mathbb{Z}_2$   & $\mathbb{Z}$   & $\mathbb{Z}^5$   &    &     &       \\ 
$B_6$    & $\mathbb{Z}$   & $\mathbb{Z}$  & $ \mathbb{Z}\oplus \mathbb{Z}_2$  & $\mathbb{Z}^3$  & $\mathbb{Z}^5$   & $\mathbb{Z}^9$  &         &                  \\ 
$B_7$    & $\mathbb{Z}$   & $\mathbb{Z}$  & $ \mathbb{Z}\oplus \mathbb{Z}_2$  & $ \mathbb{Z}\oplus \mathbb{Z}_2$ & $ \mathbb{Z}\oplus \mathbb{Z}_6 $  & $\mathbb{Z}$  & $\mathbb{Z}^7$   &                 \\ 
$B_8$    & $\mathbb{Z}$   & $ \mathbb{Z} $  & $ \mathbb{Z} \oplus \mathbb{Z}_2 $  &$ \mathbb{Z}\oplus \mathbb{Z}_2 $ & $\mathbb{Z}^3\oplus \mathbb{Z}_6 $ & $ \mathbb{Z}^3 \oplus\mathbb{Z}_3$  & $\mathbb{Z}^7$  & $\mathbb{Z}^{15}$          \\ \hline
\end{tabular}
\vspace*{2mm}
\caption{Integral homology groups of $F_P$ for type  $B_n,n\leq 8$ }
\label{tab: disB}
\end{table}

\begin{table}[H]
\begin{tabular}{c|cccccccc} \hline
         & $H_0$         & $H_1$   & $H_2$  & $H_3$  & $H_4$   & $H_5$   & $H_6$   & $H_7$           \\ \hline
$D_3$    & $\mathbb{Z}$   & $\mathbb{Z}^2$   & $\mathbb{Z}^2$    &        &         &         &       &                 \\ 
$D_4$    & $\mathbb{Z}$   & $\mathbb{Z}^2$   & $ \mathbb{Z}^4 \oplus \mathbb{Z}_2$   & $\mathbb{Z}^{5}$   &         &    &      &    \\ 
$D_5$    & $\mathbb{Z}$   & $0$  & $ \mathbb{Z}^2 \oplus \mathbb{Z}_2$   & $\mathbb{Z}^4$   & $\mathbb{Z}^4$   &    &     &       \\ 
$D_6$    & $\mathbb{Z}$   & $0$  & $\mathbb{Z}^2 \oplus  \mathbb{Z}_2^2$  & $ \mathbb{Z}^4 \oplus\mathbb{Z}_2$  & $\mathbb{Z}^4\oplus  \mathbb{Z}_2$   & $\mathbb{Z}^7$  &         &                 \\ 
$D_7$    & $\mathbb{Z}$   & $0$  & $ \mathbb{Z}^2\oplus \mathbb{Z}_2^2$  & $0$ & $ \mathbb{Z}^4\oplus \mathbb{Z}_2^3$  & $\mathbb{Z}^8$  & $\mathbb{Z}^6$   &                 \\ 
$D_8$    & $\mathbb{Z}$  &$0$  & $\mathbb{Z}_2^2$  & $0$  & $ \mathbb{Z}^4 \oplus \mathbb{Z}_2^2 \oplus \mathbb{Z}_3$ & $ \mathbb{Z}^8\oplus \mathbb{Z}_3$  & $ \mathbb{Z}^{10}\oplus \mathbb{Z}_2$  & $\mathbb{Z}^{13}$            \\ \hline
\end{tabular}
\vspace*{2mm}
\caption{Integral homology groups of $F_P$ for type  $D_n, n\leq 8$ }
\label{tab: disD}
\end{table}

%\newpage
We also tabulate  $H_i(F_{Q_0}; \mathbb{Z})$  for types $A_n$, $B_n$ and $D_n$ in  Table \ref{tab: redfibA}, Table \ref{tab: redfibB} and Table \ref{tab: redfibD}, respectively. The last column shows the Euler characteristics. The homology groups with question marks in these tables are unknown due to the memory limit of our computer.

A new result is that these homology groups in  Table \ref{tab: redfibA}, Table \ref{tab: redfibB} and Table \ref{tab: redfibD} are all torsion free. There are very few results on the integral homology group of $F_{Q_0}$ in the literature. Our results agree with Settepanella's stability-like theorem for $H_i(F_{Q_0}; \mathbb{Q})$ \cite{Set04}, and  in type $A_n$ agree with Settepanella's computations for $H_i(F_{Q_0}; \mathbb{Q})$ \cite[Table 2]{Set09}.

\begin{table}[h]
%\centering
%{\def\arraystretch{1.6}\tabcolsep=8pt
\begin{tabular}{c|cccccc|c}
\hline
      & $H_0(Q_0)$ & $H_1(Q_0)$ & $H_2(Q_0)$ & $H_3(Q_0)$ & $H_4(Q_0)$ & $H_5(Q_0)$ & $\chi(F_{Q_0})$ \\  \hline
 $A_1$  & $\mathbb{Z}$     &    &     &       &       & &         $1$ \\  
$A_2$ & $\mathbb{Z}$     & $\mathbb{Z}^4$    &     &       &       & &         $-3$  \\     
$A_3$ & $\mathbb{Z}$     & $\mathbb{Z}^7$    & $\mathbb{Z}^{18}$   &       &       & & $12$   \\ 
$A_4$ & $\mathbb{Z}$     & $\mathbb{Z}^9$     & $\mathbb{Z}^{28}$     & $\mathbb{Z}^{80}$  &       &  & $-60$  \\ 
$A_5$ & $\mathbb{Z}$     & $\mathbb{Z}^{14}$    & $\mathbb{Z}^{73}$    & $\mathbb{Z}^{206}$    & $\mathbb{Z}^{506}$  &  & $360$\\ 
$A_6$ & $\mathbb{Z}$     & $\mathbb{Z}^{20}$    & ?     & ?     & ?     & ? &$-2520$ \\ \hline
\end{tabular}
%}
\vspace*{2mm}
\caption{Integral homology groups of $F_{Q_0}$  for type $A_n, n\leq 6$}
\label{tab: redfibA}
\end{table}

\begin{table}[h]
\centering
%{\def\arraystretch{1.6}\tabcolsep=8pt
\begin{tabular}{c|ccccc|c}
\hline
      & $H_0(Q_0)$ & $H_1(Q_0)$ & $H_2(Q_0)$ & $H_3(Q_0)$ & $H_4(Q_0)$ & $\chi(F_{Q_0})$\\ \hline
$B_2$ & $\mathbb{Z}$     & $\mathbb{Z}^9$      &     &       &    & $-8$  \\       
$B_3$ & $\mathbb{Z}$      & $\mathbb{Z}^8$     & $\mathbb{Z}^{79}$    &       &    & $72$   \\ 
$B_4$ & $\mathbb{Z}$     & $\mathbb{Z}^{15}$     & $\mathbb{Z}^{73}$      & $\mathbb{Z}^{827}$     &     & $-768$  \\ 
$B_5$ & $\mathbb{Z}$     & $\mathbb{Z}^{24}$     &  ?  & ?   & ? & $9600$ \\   \hline
\end{tabular}
%}
\vspace*{2mm}
\caption{Integral homology groups of $F_{Q_0}$ for type $B_n, n\leq 5$}
\label{tab: redfibB}
\end{table}

\begin{table}[h]
\centering
%{\def\arraystretch{1.6}\tabcolsep=8pt
\begin{tabular}{c|ccccc|c}
\hline
      & $H_0(Q_0)$ & $H_1(Q_0)$ & $H_2(Q_0)$ & $H_3(Q_0)$ & $H_4(Q_0)$ & $\chi(F_{Q_0})$ \\ \hline
$D_2$ & $\mathbb{Z}$     & $\mathbb{Z}$     &     &       &  &$0$    \\ 
$D_3$ & $\mathbb{Z}$     & $\mathbb{Z}^7$     & $\mathbb{Z}^{18}$     &       &  &$12$    \\ 
$D_4$ & $\mathbb{Z}$     &  $\mathbb{Z}^{13}$     & $\mathbb{Z}^{69}$    & $\mathbb{Z}^{249}$  &   & $-192$     \\ 
$D_5$ & $\mathbb{Z}$    & $\mathbb{Z}^{18}$      &  $\mathbb{Z}^{133}$   & $\mathbb{Z}^{465}$   & $\mathbb{Z}^{3230}$   & $2880$ \\ \hline
\end{tabular}
%}
\vspace*{2mm}
\caption{Integral homology groups of $F_{Q_0}$ for type $D_n, n\leq 5$}
\label{tab: redfibD}
\end{table}

\end{document}